\definecolor{LightCyan}{rgb}{0.8, 0.9, 1}
\title[Faster Perturbed Stochastic Gradient Methods for Finding Local Minima]{Faster Perturbed Stochastic Gradient Methods for Finding Local Minima}
\thanks{Equal contribution} \Email{chenzx19@cs.ucla.edu}\\
\begin{document}

\maketitle

\begin{abstract}


Escaping from saddle points and finding local minimum is a central problem in nonconvex optimization.
Perturbed gradient methods are perhaps the simplest approach for this problem. However, to find $(\epsilon, \sqrt{\epsilon})$-approximate local minima, the existing best stochastic gradient complexity for this type of algorithms is $\tilde O(\epsilon^{-3.5})$, which is not optimal.
In this paper, we propose \texttt{LENA} (\textbf{L}ast st\textbf{E}p shri\textbf{N}k\textbf{A}ge), a faster perturbed stochastic gradient framework for finding local minima.  We show that $\algname$ with stochastic gradient estimators such as SARAH/SPIDER and STORM can find $(\epsilon, \epsilon_{H})$-approximate local minima within $\tilde O(\epsilon^{-3} + \epsilon_{H}^{-6})$ stochastic gradient evaluations (or $\tilde O(\epsilon^{-3})$ when $\epsilon_H = \sqrt{\epsilon}$). 
The core idea of our framework is a step-size shrinkage scheme to control the average movement of the iterates, which leads to faster convergence to the local minima. 

\end{abstract}
\begin{keywords}%
Stochastic Gradient Descent; Local Minima; Nonconvex Optimization.%
\end{keywords}



\section{Introduction}

In this paper, we focus on the following optimization problem
\begin{align}
    \min_{\xb \in \RR^d} F(\xb) := \EE_{\bxi}[f(\xb; \bxi)],\label{x1}
\end{align}
where $f(\xb; \bxi): \RR^d\rightarrow \RR$ is a stochastic function indexed by some random vector $\bxi$, and it is differentiable and possibly nonconvex. We consider the case where only the stochastic gradients $\nabla f(\xb; \bxi)$ are accessible. \eqref{x1} can unify a variety of stochastic optimization problems, such as finite-sum optimization and online optimization. Since in general, finding global minima of a nonconvex function could be an NP-hard problem \citep{hillar2013most}, one often seeks to finding an $(\eg, \eh)$-approximate local minimum $\xb$, i.e., $\|\nabla F(\xb)\|_{2} \leq \eg$ and $\lambda_{\min}\big(\nabla^{2}F(\xb)\big) \geq -\eh$, where $\nabla F(\xb)$ is the gradient of $F$ and $\lambda_{\min}\big(\nabla^{2}F(\xb)\big)$ is the smallest eigenvalue of the Hessian of $F$ at $\xb$. In many machine learning applications such as matrix sensing and completion \citep{bhojanapalli2016global,ge2017no}, it suffices to find local minima due to the fact that all local minima are global minima.

For the case where $f$ is a deterministic function, it has been shown that vanilla gradient descent fails to find local minima efficiently since the iterates will get stuck at saddle points for exponential time \citep{du2017gradient}. To address this issue, the simplest idea is to add random noises as a perturbation to the stuck iterates.  When second-order smoothness is assumed, \citet{jin2017escape} showed that the simple perturbation step is enough for gradient descent to escape saddle points and find $(\epsilon, \sqrt{\epsilon})$-approximate local minima within $\tilde O(1/\epsilon^2)$ gradient evaluations, which matches the number of gradient evaluations for gradient descent to find $\epsilon$-stationary points. Faster convergence rate can be attained by applying accelerated gradient descent \citep{carmon2018accelerated, jin2018accelerated} or cubic regularization with Hessian-vector product \citep{agarwal2017finding}.

Such results suggest that perturbed gradient methods can find local minima efficiently, at least for deterministic optimization. When it comes to stochastic optimization, a natural question arises:
\begin{center}
\emph{Can perturbed \textbf{stochastic} gradient methods find local minima efficiently?}    
\end{center}

To answer this question, we first look into existing results of perturbed stochastic gradient methods for finding local minima.  \citet{ge2015escaping} showed that perturbed stochastic gradient descent can find $(\epsilon, \sqrt{\epsilon})$-approximate local minima within $\tilde O(\text{poly}(\epsilon^{-1}))$ stochastic gradient evaluations. \citet{daneshmand2018escaping} showed that under a specific CNC condition, stochastic gradient descent is able to find $(\epsilon, \sqrt{\epsilon})$-approximate local minima within $\tilde O (1/\epsilon^5)$ stochastic gradient evaluations. Later on, \citet{li2019ssrgd} showed that simple stochastic recursive gradient descent (SSRGD) can find $(\epsilon, \sqrt{\epsilon})$-approximate local minima within $\tilde O(\epsilon^{-3.5})$ stochastic gradient evaluations, which is the state-of-the-arts to date. However, none of these results by perturbed stochastic gradient methods matches the optimal result $\tilde O(\epsilon^{-3})$ for finding $\epsilon$-stationary points, achieved by stochastic recursive gradient algorithm  (SARAH) \citep{nguyen2017stochastic, pham2020proxsarah}, stochastic path-integrated differential
estimator (SPIDER) \citep{fang2018spider}, stochastic nested variance-reduced gradient (SNVRG) \citep{zhou2020stochastic} and stochastic recursive momentum (STORM) \citep{cutkosky2019momentum} (See also \citet{arjevani2019lower} for the lower bound results). Therefore, whether perturbed stochastic gradient methods can find local minima as efficiently as finding stationary points still remains unknown.


In this work, we give an affirmative answer to the above question. We propose a general framework named $\algname$, which works together with existing popular stochastic gradient estimators such as SARAH/SPIDER and STORM to find approximate local minima efficiently. 
We summarize our contributions as follows:
\begin{itemize}
\item We prove that $\algname$ finds $(\eg, \eh)$-approximate local minima within $\tilde O(\eg^{-3} + \eh^{-6})$ stochastic gradient evaluations. Specifically, in the classic setting where $\eh = \sqrt{\eg}$, our $\algname$ together with the SARAH/SPIDER estimator enjoys an $\tilde O(\epsilon^{-3})$ stochastic gradient complexity, which outperforms previous best known complexity result $\tilde O(\epsilon^{-3.5})$ achieved by \citet{li2019ssrgd}. Our result also matches the best possible complexity result $\tilde O(\epsilon^{-3})$ achieved by negative curvature search based algorithms \citep{fang2018spider, zhou2020stochastic}, which suggests that \emph{simple} methods such as perturbed stochastic gradient methods can find local minima as efficiently as the more complicated ones. 
\item In addition, we show that $\algname$ with a recent proposed STORM estimator is also able to find $(\eg, \eh)$-approximate local minima within $\tilde O(\eg^{-3} + \eh^{-6})$ stochastic gradient evaluations. 
\item At the core of our $\algname$ is a novel Last step shrinkage scheme to control the average movement of the iterates, which may be of independent interest to other related nonconvex optimization algorithm design. 
\end{itemize}
To compare with previous methods, we summarized related results of stochastic first-order methods for finding local minima in Table \ref{table:comparison}.





\begin{table*}[ht] 
\vspace{-.1in}
\small
\caption{Comparison of different optimization algorithm for find approximate local minima of non convex online problems. 
}

\label{table:comparison}

\begin{center}
\begin{tabular}{llll}
\toprule 
Algorithm& Gradient complexity& Classic Setting & Neon2\\
\midrule
 Neon2+Natasha2 \citep{allen2018natasha} & $\tilde{\cO}(\eg^{-3.25} + \eg^{-3}\eh^{-1}+ \eh^{-5})$& $\tilde{\cO}(\eg^{-3.5})$ & needed\\
Neon2+SCSG \citep{allen2018neon2}& $\tilde{\cO}(\eg^{-10/3} + \eg^{-2}\eh^{-3}+ \eh^{-5})$&$\tilde{\cO}(\eg^{-3.5})$ &needed\\
 $\text{SNVRG}^{+}$+Neon2 \citep{zhou2020stochastic}&$\tilde{\cO}(\eg^{-3} + \eg^{-2}\eh^{-3}+ \eh^{-5})$&$\tilde{\cO}(\eg^{-3.5})$ &needed\\
$\text{SPIDER-SFO}^{+}$(+Neon2)\citep{fang2018spider}& $\tilde{\cO}(\eg^{-3} + \eg^{-2}\eh^{-2}+ \eh^{-5})$&$\tilde{\cO}(\eg^{-3})$ &needed\\
\midrule
Perturbed SGD \citep{ge2015escaping}&$\text{Poly}(d, \eg^{-1}, \eh^{-1})$&$\tilde{\cO}(\text{Poly}(\eg^{-1}))$ &No\\
CNC-SGD \citep{daneshmand2018escaping}& $\tilde{\cO}(\eg^{-4} + \eh^{-10})$&$\tilde{\cO}(\eg^{-5})$ &No\\
SSRGD\footnotemark[1]
\citep{li2019ssrgd}&$\tilde{\cO}(\epsilon^{-3}+ \epsilon^{-2}\epsilon_{H}^{-3} + \epsilon^{-1}\epsilon_{H}^{-4})$&$\tilde{\cO}(\eg^{-3.5})$&No\\
\rowcolor{LightCyan}$\algname$ (\textbf{This paper})&$\tilde{\cO}(\epsilon^{-3}+\epsilon_{H}^{-6})$&$\tilde{\cO}(\eg^{-3})$ 
 &No\\
\bottomrule
\end{tabular}
\end{center}
\vspace{-.1in}
\end{table*}

\noindent \textbf{Notation} We use lower case letters to denote scalars,  lower and upper case bold letters to denote vectors and matrices. We use $\| \cdot \|$ to indicate Euclidean norm. We use $\mathbb{B}_{\xb}(r)$ to denote a Euclidean ball center at $\xb$ with radius $r$.We also use the standard $O$ and $\Omega$ notations. We use $\lambda_{\min}(\Mb)$ to denote the minimum eigenvalue of matrix $\Mb$. We say $a_n = O(b_n)$ if and only if $\exists C > 0, N > 0, \forall n > N, a_n \le C b_n$; $a_n = \Omega(b_n)$ if and only if  $\exists C > 0, N > 0, \forall n > N, a_n \ge C b_n$. The notation $\tilde{O}$ is used to hide logarithmic factors.

\section{Related Work}
In this section, we review some important related works. 

\footnotetext[1]{The complexity result of SSRGD in Table \ref{table:comparison} is achieved only when $\epsilon_H\geq\epsilon^{1/2}$.}

\noindent\textbf{Variance reduction methods for finding stationary points.} 
Our algorithm takes stochastic gradient estimators as its subroutine. Specifically, \citet{johnson2013accelerating,xiao2014proximal} proposed stochastic variance reduced gradient (SVRG) for convex optimization in the finite-sum setting. \citet{reddi2016stochastic, allen2016variance} analyzed SVRG for nonconvex optimization. \citet{lei2017nonconvex} proposed a new variance reduction algorithm, dubbed stochastically controlled stochastic gradient (SCSG) algorithm, which finds a $\epsilon$-stationary point within $O(\epsilon^{-10/3})$ stochastic gradient evaluations.
\citet{nguyen2017sarah} proposed SARAH which uses a recursive gradient estimator for convex optimization, and it was extended to nonconvex optimization in \citep{nguyen2017stochastic}. \citet{fang2018spider} proposed a SPIDER algorithm with a recursive gradient estimator and proved an $O(\epsilon^{-3})$ stochastic gradient evaluations to find a $\epsilon$-stationary point, which matches a corresponding lower bound. Concurrently, \citet{zhou2020stochastic} proposed an SNVRG algorithm with a nested gradient estimator and proved an $\tilde O(\epsilon^{-3})$ stochastic gradient evaluations to find a $\epsilon$-stationary point. \citet{wang2019spiderboost} proposed a Spiderboost algorithm with a constant step size, achieves the same $O(\epsilon^{-3})$ gradient complexity. \citet{pham2020proxsarah} extended SARAH \citep{nguyen2017stochastic} to proximal optimization and proved $O(\epsilon^{-3})$ gradient complexity for finding stationary points. Recently, \citet{cutkosky2019momentum} proposed a recursive momentum-based algorithm called STORM and proved an $\tilde O(\epsilon^{-3})$ gradient complexity to find $\epsilon$-stationary points. \citet{tran2019hybrid} proposed a SARAH-SGD algorithm which hybrids both SGD and SARAH algorithm with an $\tilde O(\epsilon^{-3})$ gradient complexity when $\epsilon$ is small. \citet{li2020page} proposed a PAGE algorithm with probabilistic gradient estimator which also attains an $\tilde O(\epsilon^{-3})$ gradient complexity. In our work, we employ SARAH/SPIDER and STORM as the gradient estimator in our $\algname$ framework since they are most representative and simple to use.

\noindent\textbf{Utilizing negative curvature descent to escape from saddle points.}
To escape saddle points, a widely used approach is to first compute the direction of the negative curvature of the saddle point and move away along that direction. In deterministic optimization, \citet{agarwal2017finding} proposed a Hessian-vector product based cubic regularization method which finds $(\eg, \sqrt{\eg})$-approximate local minima within $\tilde O(\eg^{-7/4})$ gradient and Hessian-vector product evaluations. \citet{carmon2018accelerated} proposed an accelerated gradient method with negative curvature finding that finds $(\eg, \sqrt{\eg})$-approximate local minima with in $\tilde O(\eg^{-7/4})$ gradient and Hessian-vector product evaluations. Both of these complexities match the complexity $\tilde O(\eg^{-7/4})$ achieved by perturbed accelerated gradient descent, proposed by \citet{jin2018accelerated}. In stochastic optimization, to find $(\eg, \eh)$-approximate local minima, \cite{allen2018natasha} proposed a Natasha algorithm using Hessian-vector product to compute the negative curvature direction with the total computation cost of $\tilde O(\eg^{-3.25} + \eg^{-3}\eh^{-1}+ \eh^{-5})$. Later, \citet{xu2017first} proposed a Neon method which computes the negative curvature direction with perturbed stochastic gradients, whose total computational cost is $\tilde O(\eg^{-10/3} + \eg^{-2}\eh^{-3}+ \eh^{-6})$. \cite{allen2018neon2} proposed a Neon2 negative curvature computation subroutine with $\tilde O(\eg^{-10/3} + \eg^{-2}\eh^{-3}+ \eh^{-5})$ stochastic gradient evaluations. \citet{fang2018spider} then showed that SPIDER equipped with Neon2 can find $(\eg, \eh)$-approximate local minima within  $\tilde{\cO}(\eg^{-3} + \eg^{-2}\eh^{-2}+ \eh^{-5})$ stochastic gradient evaluations, while independently \citet{zhou2020stochastic} proved that SNVRG equipped with Neon2  can find $(\eg, \eh)$-approximate local minima within  $\tilde{\cO}(\eg^{-3} + \eg^{-2}\eh^{-3}+ \eh^{-5})$ stochastic gradient evaluations. In contrast to this line of works, our algorithm is simpler since it does not need to use the negative curvature search routine.

\section{Preliminaries}
In this section, we present assumptions and definitions that will be used throughout our analysis. We first introduce the standard smoothness and Hessian Lipschitz assumptions.

\begin{assumption}\label{asm:lip}
For all $\bxi$, $f(\cdot; \bxi)$ is $L$-smooth and its Hessian is $\rho$-Lipschitz continuous w.r.t. $\xb$, i.e., for any $\xb_1, \xb_2$, we have that
\begin{align*}
    \|\nabla f(\xb_1; \bxi) - \nabla f(\xb_2; \bxi)\|_2 \le L\|\xb_1 - \xb_2\|_2,\ \|\nabla^2 f(\xb_1; \bxi) - \nabla^2 f(\xb_2; \bxi)\|_2 \le \rho\|\xb_1 - \xb_2\|_2
\end{align*}
\end{assumption}
This assumption directly implies that the expected objective function $F(\xb)$ is also $L$-smooth and its Hessian is $\rho$-Lipschitz continuous. This assumption is standard for finding approximate local minima in all the results presented in Table~\ref{table:comparison}.


\begin{assumption}\label{asm:bdvariance}
The squared difference between the stochastic gradient and full gradient is bounded  by $\sigma^{2} < \infty$,  i.e., for any $\xb, \bxi\in \RR^d$, $\|\nabla f(\xb;\bxi) - \nabla F(\xb)\|_{2}^{2} \leq \sigma^{2}$.
\end{assumption}
Assumption \ref{asm:bdvariance} is standard in online/stochastic optimization, including for finding second-order stationary points \citep{fang2018spider, li2019ssrgd}, and immediately implies that the variance of the stochastic gradient is bounded by $\sigma^2$. It can be relaxed to be $\|\nabla f(\xb;\bxi) - \nabla F(\xb)\|_{2}$ has a $\sigma$-Sub-Gaussian tail.

Let $\xb_0\in \RR^{d}$ be the starting point of the algorithm. We assume the gap between the initial function value and the optimal value is bounded.
\begin{assumption}\label{asm:bdvalue}
We have $\Delta = F(\xb_0) - \inf_\xb F(\xb) < +\infty$. 
\end{assumption}
Next, we give the formal definition of approximate local minima (a.k.a., second-order stationary points).
\begin{definition}
We call $\xb \in \RR^{d}$ an $(\eg, \eh)$-approximate local minimum, if
\begin{align*}
\|\nabla F(\xb)\|_{2} \leq \eg, \lambda_{\min}\big(\nabla^{2}F(\xb)\big) \geq -\eh.
\end{align*}
\end{definition}
The definition of $(\eg, \eh)$-approximate local minima is a generalization of the classical $(\eg, \sqrt{\eg})$-approximate local minima studied by \citet{nesterov2006cubic,jin2017escape}.


\section{The $\algname$ Framework}

In this section, we present our main algorithm $\algname$. We begin with reviewing the mechanism of perturbed gradient descent in deterministic optimization, and then we discuss the main difficulty of extending it to the stochastic optimization case. Finally, we show how we overcome such a difficulty by presenting our $\algname$ framework. 

\noindent\textbf{How does perturbed gradient descent escape from saddle points?} 
We review the perturbed gradient descent \citep{jin2017escape} (PGD for short) with its proof roadmap, which shows how PGD finds $(\epsilon, \sqrt{\epsilon})$-approximate local minima efficiently. In general, the whole process of perturbed gradient descent can be decomposed into several epochs, and each epoch consists of two non-overlapping phases: the \emph{gradient descent phase} ($\gdphase$ for short) and the \emph{Escape from saddle point phase} ($\ncphase$ for short). In each epoch, PGD starts with the $\gdphase$ by default. In the $\gdphase$, PGD performs vanilla gradient descent to update its iterate, until at some iterate $\tilde \xb$, the norm of the gradient $\|\nabla F(\tilde\xb)\|_2$ is less than the target accuracy $\tilde O(\epsilon)$. Then PGD switches to the $\ncphase$. In the $\ncphase$, PGD first adds a uniform random noise (or Gaussian noise) to the current iterate $\tilde\xb$, then it runs $\ttres = \tilde O(\epsilon^{-1/2})$ steps of vanilla gradient descent. PGD then compares the function value gap between the current iterate and the beginning iterate of $\ncphase$ $\tilde\xb$. If the gap is less than a threshold $\cF = \tilde O(\epsilon^{1.5})$, then PGD outputs $\tilde\xb$ as the targeted local minimum. Otherwise, PGD starts a new epoch and performs gradient descent again. 

To see why PGD can find $(\epsilon, \sqrt{\epsilon})$-approximate local minima within $\tilde O(\epsilon^{-2})$ gradient evaluations, we do the following calculation. First, when PGD is in the $\gdphase$, the function value decreases $\tilde O(\epsilon^2)$ per step (following the standard gradient descent analysis). When PGD is in the $\ncphase$, the function value decreases $\cF/\ttres = \tilde O(\epsilon^2)$ per step on average. Therefore, the total number of steps will be bounded by $\tilde O(\epsilon^{-2})$, which is of the same order as GD for finding $\epsilon$-stationary points. 

\noindent\textbf{Limitation of existing methods.}
However, extending the two-phase PGD algorithm from deterministic optimization to stochastic optimization with a competitive gradient complexity is very challenging. We take the SSRGD algorithm proposed by \citet{li2019ssrgd} as an example, which uses SARAH/SPIDER \citep{fang2018spider} as its gradient estimator. Unlike deterministic optimization where we can access the exact function value $F(\xb)$ and gradient $\nabla F(\xb)$ defined in \eqref{x1}, in the stochastic optimization case we can only access the stochastic function $f(\xb; \bxi)$ and the stochastic gradient $\nabla f(\xb; \bxi)$. Therefore, in order to estimate the gradient norm $\|\nabla F(\xb)\|_2$ (which is required at the beginning of $\ncphase$), a naive approach (adapted by \citet{li2019ssrgd}) is to sample a big batch of stochastic gradients $\nabla f(\xb; \bxi_1), \dots, \nabla f(\xb; \bxi_B)$ and uses their mean to approximate $\nabla F(\xb)$. Standard concentration analysis suggests that in order to achieve an $\epsilon$-accuracy, the batch size $B$ should be in the order $\tilde O(\epsilon^{-2})$. Thus, each $\ncphase$ leads to a $\cF = \tilde O(\epsilon^{1.5})$ function value decrease with at least $\tilde O(\epsilon^{-2})$ number of stochastic gradient evaluations, which contributes $\tilde O(1/\epsilon^{1.5}\cdot \epsilon^{-2}) = \tilde O(\epsilon^{-3.5})$ gradient complexity in the end. This is already worse than the $O(\epsilon^{-3})$ gradient complexity of SPIDER for finding $\epsilon$-stationary points. 

\noindent\textbf{Our approach.} Here we propose our $\algname$ framework in Algorithm \ref{alg:Pullback}, which overcomes the aforementioned limitation. 
In detail, $\algname$ inherits the two-phase structure of PGD and SSRGD, and it takes either SARAH/SPIDER or STORM \citep{cutkosky2019momentum} as its gradient estimator. The two gradient estimators are summarized as subroutines $\gdspider$ and $\gdstorm$ in Algorithms~\ref{alg:spider} and \ref{alg:storm} respectively, and we use $\db_t$ to denote their estimated gradient at iterate $\xb_t$. The key improvement of $\algname$ is that it directly takes the output of the gradient estimator $\gdname$ to estimate the true gradient $\nabla F(\xb)$, which avoids sampling a big batch of stochastic gradients as in \citet{li2019ssrgd} and thus saves the total gradient complexity. A similar strategy has also been adapted in \citet{fang2018spider}, but for the negative curvature search subroutine. However, such a strategy leads to a new problem to be solved. 

Since we use $\db_t$ to directly estimate $\nabla F(\xb_t)$, in order to make such an estimation valid, we need to guarantee that the error between $\db_t$ and $\nabla F(\xb_t)$ is small enough, e.g., up to $O(\epsilon)$ accuracy. Notice that the recursive structure of SARAH/SPIDER and STORM suggests the following error bound:
\begin{align}
    \forall t,\ \|\db_t - \nabla F(\xb_t)\|_2^2 = \tilde O\bigg(\sum_{i = s_t}^{t-1}\|\xb_{i+1} - \xb_i\|_2^2\bigg), \label{y1}
\end{align}
where $s_t$ is some reference index only related to $t$. 
Therefore, in order to make the error $\|\db_t - \nabla F(\xb_t)\|_2$ small, it suffices to make the movement of the iterates $\|\xb_{i+1} - \xb_i\|_2$ small either individually or on average. In the $\gdphase$, when the norm of the estimated gradient $\|\db_t\|_2$ is large, we use normalized gradient descent, which forces the movement $\|\xb_{i+1} - \xb_i\|_2 = \eta_t \|\db_t\|_{2} = \eta$ to be small. Such an approach is also adapted by \citet{fang2018spider} as a normalized gradient descent for finding either first-order stationary points or local minima. In the $\ncphase$, which starts at the $m_s$-th step.  We achieve this goal by our proposed ``last
step shrinkage'' scheme. In detail, we record the accumulative squared movement starting from $\xb_{m_s+1}$ (after the perturbation step) as $D: = \sum_{i = m_s+1}^{t}\|\xb_{i+1} - \xb_i\|_2^2 = \sum_{i = m_s+1}^{t}\eta_i^2\|\db_i\|_2^2$. When the average movement $D/(t-m_s +1)$ is large, we pull the \emph{last} step size $\eta_t$ back to a smaller value, which forces the average movement $D/(t-m_s +1)$ to be small. Fortunately, such a simple step-size calibration scheme allows us to carefully control the error between $\db_t$ and $\nabla F(\xb_t)$, and to reduce the gradient complexity. 

Here we would like to emphasize some features of our algorithm. First, our $\algname$ is a generic framework that can use any stochastic gradient estimator satisfying \eqref{y1}. Second, following previous works \citep{johnson2013accelerating, fang2018spider, cutkosky2019momentum}, our $\algname$ requires the access to a stochastic gradient oracle that simultaneously queries the stochastic gradients at two distinct points with the same randomness $\bxi$. That is stronger than the standard stochastic gradient oracle adapted by SGD and HVP-RVR \citep{arjevani2020second}, which only queries the stochastic gradient at a single point. 

\begin{algorithm}[t]
\caption{$\algname$}
\begin{algorithmic}[1]\label{alg:Pullback}
\REQUIRE Initial point $\xb_1$, step size $\eta$ and $\eta_H$, perturbation radius $r$, threshold parameter $\ttres$, average movement $\barD$. 
\STATE $\db_1 \leftarrow \gdname(0,\zero, \zero, \xb_1)$, $s\leftarrow 0$, $t\leftarrow1$, \textsc{find}$\leftarrow$false
\WHILE{\textsc{find} = false}
\STATE $s \leftarrow s+1$, $t_s \leftarrow t$, \textsc{find}$\leftarrow$true
\WHILE{$\|\db_{t}\|_2 > \eg$}
\STATE $\eta_t \leftarrow \etag/\|\db_t\|_2$, 
\STATE $\xb_{t+1} \leftarrow \xb_t - \eta_t\db_t$, $\db_{t+1}\leftarrow \gdname(t, \db_t, \xb_t, \xb_{t+1})$, $t \leftarrow t+1$
\ENDWHILE
\STATE $m_s \leftarrow t$, $\bxi\sim \text{Uniform} \ B_{0}(r)$, $\xb_{t+1} \leftarrow \xb_t + \bxi$, $\db_{t+1}\leftarrow \gdname(t, \db_t, \xb_t, \xb_{t+1})$, $t \leftarrow t+1$
\FOR{$k = 0,\dots, \ttres-1$}
\STATE $\eta_t \leftarrow \etah$, $D\leftarrow \sum_{i=m_s}^t \eta_i^2\|\db_i\|_2^2$
\IF {$D>(t-m_s+1)\barD$}
\STATE Set $\eta_t$ such that $\sum_{i=m_s}^t \eta_i^2\|\db_i\|_2^2 = (t-m_s+1)\barD$ \COMMENT{\textcolor{blue}{``Last
Step Shrinkage''}}
\STATE $\xb_{t+1} \leftarrow \xb_t - \eta_t\db_t$, $\db_{t+1}\leftarrow \gdname(t, \db_t, \xb_t, \xb_{t+1})$, $t \leftarrow t+1$, \textsc{find} $\leftarrow$ false, \textbf{break}
\ENDIF
\STATE $\xb_{t+1} \leftarrow \xb_t - \eta_t\db_t$, $\db_{t+1}\leftarrow \gdname(t, \db_t, \xb_t, \xb_{t+1})$, $t \leftarrow t+1$
\ENDFOR
\ENDWHILE
\ENSURE $\xb_{m_s}$
\end{algorithmic}
\end{algorithm}

\begin{algorithm}[t]
\caption{\gdspider($t,\db_t, \xb_t, \xb_{t+1},b,q, B$)}\label{alg:calculate}
\begin{algorithmic}[1]\label{alg:spider}
\REQUIRE Big batch size $B$, mini-batch size $b$, loop length $q$
\IF {$t\ \text{mod}\ q = 0$}
\STATE Generate $\bxi_{t+1}^1,\dots, \bxi_{t+1}^B$. Set $\db_{t+1} \leftarrow \sum_{i = 1}^B \nabla f(\xb_{t+1}; \bxi_{t+1}^i)/B$ 
\ELSE
\STATE Generate $\bxi_{t+1}^1,\dots, \bxi_{t+1}^b$. Set $\db_{t+1}\leftarrow \db_t + \sum_{i = 1}^b \big[\nabla f(\xb_{t+1}; \bxi_{t+1}^i) - \nabla f(\xb_{t}; \bxi_{t+1}^i)\big]/b$
\ENDIF
\ENSURE $\db_{t+1}$
\end{algorithmic}
\end{algorithm}

\section{Main Results}
In this section, we present the main theoretical results. We first present the convergence guarantee of $\algspider$, which uses $\gdspider$ to estimate the gradient $\db_{t}$ in Algorithm~\ref{alg:Pullback}.

\begin{theorem}\label{Theorem Spider} Under Assumptions \ref{asm:lip}, \ref{asm:bdvariance} and \ref{asm:bdvalue}, we choose batch size $B = \tilde O \big(\sigma^{2}\epsilon^{-2} + \sigma^{2}\rho^{2}\eh^{-4}\big)$, $b=q=\sqrt{B}$, set GD phase step size $\etag = \sigma/(2\sqrt{B}L)$, Escape phase step size $\etah = \tilde O(L^{-1})$, perturbation radius $r \leq \min\big\{\sigma/(2\sqrt{B}L), \log(4/\delta)\etah\sigma^{2}/(2B\epsilon), \sqrt{2\log(4/\delta)\etah\sigma^2/(BL)}\big\}$, threshold $\ttres = \tilde O(1/(\etah\eh))$ and $\barD = \sigma^{2}/(4BL^{2})$. Then with high probability,  $\algspider$ can find  $(\epsilon, \epsilon_{H})$-approximate local minima within $\tilde O\big(\sigma L\Delta\epsilon^{-3} + \sigma\rho^{3}L\Delta\eh^{-6}\big)$ stochastic gradient evaluations. 

\end{theorem}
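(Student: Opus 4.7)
The plan is to follow the standard two-phase analysis of perturbed (stochastic) gradient methods, but with the key twist that the gradient estimate $\db_t$ produced by $\gdspider$ is used \emph{directly} as a proxy for $\nabla F(\xb_t)$, so the whole argument must be driven by controlled movement of the iterates rather than by a fresh large-batch gradient query. First I would establish a uniform SARAH/SPIDER-type error bound: for any $t$ with reference index $s_t = q\lfloor t/q\rfloor$,
\begin{align*}
    \|\db_t - \nabla F(\xb_t)\|_2^2 \;\le\; \tilde O\!\left(\frac{\sigma^2}{B} + \frac{L^2}{b}\sum_{i=s_t}^{t-1}\|\xb_{i+1}-\xb_i\|_2^2\right),
\end{align*}
which holds with high probability by a Bernstein/martingale concentration argument on the nested differences. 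Plugging in $B = \tilde O(\sigma^2/\epsilon^2 + \sigma^2\rho^2/\epsilon_H^4)$ and $b = q = \sqrt B$ makes the first term $\tilde O(\epsilon^2 \wedge \epsilon_H^4/\rho^2)$, so it suffices to control the movement sum.

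Next I would handle the $\gdphase$. Because $\eta_t = \etag/\|\db_t\|_2$, each step has $\|\xb_{t+1}-\xb_t\|_2 = \etag = \sigma/(2\sqrt B L)$, so over the $q$ steps of one SPIDER epoch the movement sum is $q\etag^2 = O(\epsilon^2/L^2)$, keeping the error term above at the $\epsilon$-scale. Combining this with smoothness in the form $F(\xb_{t+1}) \le F(\xb_t) - \eta_t\langle \nabla F(\xb_t),\db_t\rangle + L\eta_t^2\|\db_t\|_2^2/2$, and using $\|\db_t\|_2>\epsilon$ together with $\|\db_t - \nabla F(\xb_t)\|_2 \le \epsilon/2$, gives a per-step function-value decrease of $\Omega(\etag\epsilon) = \Omega(\epsilon^2/L \cdot \sigma^{-1})$ after telescoping over an epoch. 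Amortizing the SPIDER cost of $B$ every $q$ steps plus $b$ per step yields $O(\sqrt B) = \tilde O(\sigma/\epsilon + \sigma\rho/\epsilon_H^2)$ gradient queries per step, and summing over at most $\Delta /(\etag\epsilon)$ steps contributes $\tilde O(\sigma L\Delta/\epsilon^3)$ to the final complexity.

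The heart of the proof, and the step I expect to be the main obstacle, is the $\ncphase$ analysis. I will show that in each $\ncphase$ starting at $m_s$ one of three mutually exclusive outcomes must occur: (a) the ``last-step shrinkage'' is triggered before the $\ttres$ steps are over, which certifies $D \ge (t-m_s+1)\barD$ and therefore via the standard identity $F(\xb_{m_s})-F(\xb_{t+1}) \gtrsim D/\eta_H - (\text{error})$ produces a function-value decrease of $\Omega((t-m_s)\barD/\etah)$; (b) the shrinkage is never triggered, the average movement stays below $\barD$, and the error bound above reduces to $\|\db_t-\nabla F(\xb_t)\|_2 = \tilde O(\epsilon_H^2/\rho)$, in which case the PGD/SSRGD-style coupling argument between two trajectories whose initial perturbations differ only along the most-negative-eigenvector of $\nabla^2 F(\xb_{m_s})$ applies, and produces a decrease of $\cF = \tilde\Omega(\epsilon_H^3/\rho^2)$ over $\ttres = \tilde O(1/(\etah \epsilon_H))$ steps whenever $\lambda_{\min}(\nabla^2 F(\xb_{m_s}))<-\epsilon_H$; or (c) neither happens and we certify both $\|\nabla F(\xb_{m_s})\|_2\le\epsilon$ and $\lambda_{\min}(\nabla^2 F(\xb_{m_s}))\ge-\epsilon_H$, returning an approximate local minimum. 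The delicate part is verifying that the ``stuck region'' argument of \citet{jin2017escape} still goes through when the step size is recalibrated in the very last step: I would check that (i) the shrinkage can only \emph{decrease} $\eta_t$, so the one-step descent lemma still holds; (ii) the random perturbation at $m_s$ still yields an $\Omega(r/\sqrt d)$ component along the escape direction with high probability; and (iii) the SPIDER error, which now depends on the full history inside the epoch, is bounded by the averaged-movement certificate and can therefore be absorbed into the curvature gain.

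Finally I would tally the complexity. Each $\ncphase$ costs $\tilde O(\ttres\sqrt B) = \tilde O(\sqrt B/(\etah\epsilon_H))$ stochastic gradients and yields either a successful termination or an $\Omega(\min\{\cF,\ttres\barD/\etah\})$ function-value decrease; choosing $\barD = \sigma^2/(4BL^2)$ balances the two decreases so that both scenarios give $\tilde\Omega(\epsilon_H^3/\rho^2)$ progress. The number of such epochs is at most $\Delta/\cF$, giving total escape-phase cost $\tilde O(\sigma\rho^3 L\Delta/\epsilon_H^6)$. Adding the $\gdphase$ cost yields the claimed $\tilde O(\sigma L\Delta \epsilon^{-3}+\sigma\rho^3 L\Delta\epsilon_H^{-6})$ bound, and a union bound over all high-probability events (the SPIDER concentration across every epoch and the perturbation direction in every $\ncphase$) gives the overall high-probability guarantee.
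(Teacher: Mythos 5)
Your proposal captures the right ingredients (the SPIDER-type error bound driven by accumulated movement, normalized steps in the $\gdphase$, the coupled-trajectory argument for a small ``stuck region,'' and an amortization over $\Delta$), but outcome (b) in your escape-phase case analysis is not a scenario the algorithm can realize. In $\algname$, whenever the last-step shrinkage is \emph{not} triggered during the $\ttres$ inner iterations, the variable \textsc{find} stays true and the algorithm exits the outer while-loop and outputs $\xb_{m_s}$; there is no branch in which the algorithm keeps descending to realize a function-value decrease of $\cF$ while the movement stays below $\barD$. So ``shrinkage never triggered'' and ``trajectory escapes with decrease $\cF$'' are mutually exclusive, not compatible as you describe. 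The correct logical structure, which is the paper's Lemma~\ref{lm:hessiandescent} together with Lemma~\ref{lm:localization}, is: (i) the coupling argument is used \emph{only} to show that when $\lambda_{\min}(\nabla^2 F(\xb_{m_s}))\le -\eh$, with high probability the trajectory must travel far enough that the cumulative squared movement exceeds $(t-m_s+1)\barD$ before $\ttres$ steps, forcing the shrinkage to trigger; and (ii) conditionally on the shrinkage having triggered, the per-step descent identity and the choice of $\barD$ certify a decrease of $\Omega(\log(4/\delta)\etah\sigma^2/B)$ per step (Lemma~\ref{lm:localization}). All function-value progress in the $\ncphase$ flows through (ii); the coupling argument never directly produces a $\cF$-decrease here. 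Your formulation imports the classic PGD logic of \citet{jin2017escape}, where the episode runs a full $\ttres$ steps and the function-value drop is measured directly, but that logic is precisely what this algorithm is designed to avoid: in the stochastic setting one cannot evaluate $F$ cheaply, so the algorithm substitutes a movement-based certificate for a value-based one, and the control flow (break on large cumulative movement) reflects this substitution.

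The rest of your outline is consistent with the paper. The error bound, the normalized-GD control of movement in the $\gdphase$, the per-step decrease $\Omega(\etag\eg)$, and the final amortization $\tilde O(\sigma L\Delta\eg^{-3}+\sigma\rho^3 L\Delta\eh^{-6})$ all match. One secondary imprecision: in the complexity tally you lower-bound the per-episode decrease by $\cF$, but when the shrinkage triggers early the episode is shorter and the decrease is smaller; the clean argument, as in the paper, is to work with the per-step rate $\log(4/\delta)\etah\sigma^2/B$ and bound the total number of steps directly by $\Delta$ divided by that rate, rather than counting episodes by $\Delta/\cF$. Both routes give the same final bound, but the per-step accounting avoids having to argue that episodes are typically full length.
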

\begin{remark}\label{Remark1}
In the classical setting $\epsilon = \sqrt{\epsilon_{H}}$, our result gives $\tilde O(\eg^{-3})$ gradient complexity, which outperforms the best existing result $\tilde O(\eg^{-3.5})$ for perturbed stochastic gradient methods achieved by SSRGD \citep{li2019ssrgd}. 
For sufficiently small $\epsilon$,  \citet{arjevani2020second} proved the lower bound of gradient complexity $\Omega(\epsilon^{-3}+ \epsilon_{H}^{-5})$ for any first-order stochastic methods to find $(\eg, \eh)$-approximate local minima. Our results matches the lower bound $\tilde O(\epsilon^{-3})$ when $\epsilon_H \geq \epsilon^{3/5}$. For the general case, there is still a gap in the dependency of $\epsilon_{H}$ between our result and the lower bound, and we leave to close it as future work. 

\end{remark}

\begin{algorithm}[t]
\caption{$\gdstorm$($t,\db_t, \xb_t, \xb_{t+1},a, b, B$)}\label{alg:calculate}
\begin{algorithmic}[1]\label{alg:storm}
\REQUIRE Initial batch size $B$, mini batch size $b$ and weight parameter $a$.
\IF {$t = 0$}
\STATE Generate $\bxi_{t+1}^1,\dots, \bxi_{t+1}^{B}$. Set $\db_{t+1} \leftarrow \sum_{i = 1}^{B} \nabla f(\xb_{t+1}; \bxi_{t+1}^i)/B$ 
\ELSE
\STATE Generate $\bxi_{t+1}^1,\dots, \bxi_{t+1}^b$
\STATE Set $\db_{t+1}\leftarrow (1-a)\big[\db_t - \sum_{i = 1}^b \nabla f(\xb_{t}; \bxi_{t+1}^i)/b\big] +  \sum_{i = 1}^b \nabla f(\xb_{t+1}; \bxi_{t+1}^i)/b$
\ENDIF
\ENSURE $\db_{t+1}$
\end{algorithmic}
\end{algorithm}

Next, we present the convergence guarantee of $\algstorm$, which uses the gradient estimator $\gdstorm$ to estimate the gradient $\db_{t}$ in Algorithm \ref{alg:Pullback}.

\begin{theorem}\label{thm:storm} Under Assumptions \ref{asm:lip}, \ref{asm:bdvariance} and \ref{asm:bdvalue}, choose the mini batch size $b = \tilde O \big(\sigma\epsilon^{-1} + \sigma\rho \eh^{-2}\big)$, and initial batch size $B = b^{2}$, set GD phase step size $\etag = \sigma/(2bL)$, Escape phase step size $\etah = \tilde O(L^{-1})$, weight $a = 56^{2}\log(4/\delta)/b$, threshold $\ttres = \tilde O(1/(\etah\eh))$, perturbation radius $r \leq \min\big\{\sigma/(2bL), \log(4/\delta)^{2}\etah\sigma^{2}/(\epsilon b^{2}), \sqrt{2\log(4/\delta)^{2}\etah\sigma^2/(b^{2}L)}\big\}$, and $\barD = \sigma^{2}/(4b^{2}L^{2})$. Then with high probability,  $\algstorm$ can find  $(\epsilon, \epsilon_{H})$-approximate local minima within $\tilde O\big(\sigma L\Delta\epsilon^{-3} + \sigma\rho^{3}L\Delta\eh^{-6}\big)$ stochastic gradient evaluations. 
\end{theorem}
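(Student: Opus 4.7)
The plan is to mirror the proof of Theorem~\ref{Theorem Spider}, replacing the SPIDER error bound \eqref{y1} with one adapted to the STORM estimator. The key preliminary step is to establish a high-probability bound on $U_t := \db_t - \nabla F(\xb_t)$. Unrolling the $\gdstorm$ recursion gives $U_{t+1} = (1-a)U_t + E_t$, where $E_t$ is a zero-mean martingale difference whose conditional sub-Gaussian parameter is $\tilde O(L\|\xb_{t+1}-\xb_t\|_2/\sqrt{b} + a\sigma/\sqrt{b})$, using that the two stochastic gradients in $\gdstorm$ share the same randomness $\bxi$ so $L$-smoothness controls their difference. A vector Freedman-type concentration applied to the geometrically-weighted sum then yields, with probability $1-\delta$,
\begin{align*}
\|U_t\|_2^2 \lesssim (1-a)^{2t}\frac{\sigma^2}{B} + \log(1/\delta)\bigg(\frac{a\sigma^2}{b} + \frac{L^2}{b}\sum_{i=0}^{t-1}(1-a)^{2(t-1-i)}\|\xb_{i+1}-\xb_i\|_2^2\bigg).
\end{align*}
With $B = b^2$ and $a = \tilde O(1/b)$ the first two terms are $\tilde O(\sigma^2/b^2)$, while the geometric weight truncates the sum to effectively the last $\Theta(1/a) = \Theta(b)$ iterates, which plays the role of the implicit reference index $s_t$ in \eqref{y1}.

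Given this STORM-flavored substitute for \eqref{y1}, the remaining argument runs in parallel with Theorem~\ref{Theorem Spider}. In the GD phase, $\|\xb_{t+1}-\xb_t\|_2 = \etag = \sigma/(2bL)$ is constant, so the STORM bound combined with $ab = \Theta(1)$ gives $\|U_t\|_2 \leq \epsilon/2$; the normalized-gradient descent analysis then yields an $\Omega(\etag\epsilon)$ function-value decrease per step. Across all epochs the GD phase costs $\tilde O(bL\Delta/(\sigma\epsilon))$ iterations and, multiplied by $b$ stochastic gradients per step, produces the $\tilde O(\sigma L\Delta \epsilon^{-3})$ term after $b = \tilde O(\sigma/\epsilon)$ is substituted. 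In the Escape phase starting at $m_s$, the ``last step shrinkage'' enforces the window-averaged movement $D/(t-m_s+1) \leq \barD = \sigma^2/(4b^2 L^2)$, which via the STORM bound above keeps the gradient error at $\tilde O(\epsilon)$ throughout. If shrinkage triggers at some $t$, the descent lemma for the $\etah$-step yields an amortized $\Omega(\barD/\etah)$ per-step function-value decrease; if the phase completes cleanly, I would invoke the coupling-sequence argument of \citet{jin2017escape}: either $\xb_{m_s}$ is already an $(\epsilon,\eh)$-approximate local minimum, or two trajectories sharing all randomness except the initial perturbation diverge exponentially and at least one secures an $\Omega(\cF) = \Omega(\eh^3/\rho^2)$ function-value decrease in $\ttres = \tilde O(1/(\etah\eh))$ steps. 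Since there are at most $\Delta/\cF$ such phases and each costs $\tilde O(b\ttres)$ gradients, $b = \tilde O(\sigma\rho/\eh^2)$ produces the $\tilde O(\sigma\rho^3 L\Delta \eh^{-6})$ term.

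The main obstacle I anticipate is the uniform-in-time control of the STORM error across the entire trajectory rather than within a single phase. Unlike SPIDER, which deliberately resets the estimator via a big batch every $q$ iterations, STORM damps old errors only geometrically, so the error bound must be propagated inductively alongside the movement control---normalization in the GD phase and shrinkage in the Escape phase---with a single union bound over all $\tilde O(\epsilon^{-3}+\eh^{-6})$ iterations. Verifying that any movement burst incurred near a phase boundary is caught by the shrinkage trigger before it compounds through the STORM recursion is the delicate technical step, and I expect it to dictate the precise logarithmic factors absorbed into the final $\tilde O(\cdot)$ bound.
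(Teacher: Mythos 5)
Your high-level plan matches the paper's: derive a STORM analogue of \eqref{y1}, reuse the two-phase (normalized-GD / Escape) accounting, and use a coupling argument for termination correctness. However, two places in your sketch point at real work that is left undone, and one of them is a genuine gap.

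First, you correctly flag the uniform-in-time control of the STORM error as "the delicate technical step," but you do not resolve it. The paper's Lemma~\ref{lm:boundeps2} handles this by decomposing the geometrically-weighted sum $\sum_i (1-a)^{2t-2i-2}\|\xb_{i+1}-\xb_i\|_2^2$ into three index sets (normalized-GD steps, perturbation steps, Escape-phase steps) and, for the Escape blocks, uses a dedicated comparison (Proposition~\ref{very basic}) to replace individual squared moves by the shrinkage-enforced average $\barD$, at a cost of a factor $(1-a)^{-2\ttres}$. This forces the extra constraint $a\ttres \leq 1/4$ so that $(1-a)^{-2\ttres}\leq 2$, which reconciles the single-loop STORM geometric window with phase lengths that you never explicitly control in your sketch. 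Without this step, the movement during an Escape block can inflate the STORM error bound by an a priori unbounded factor when $t$ falls just after a long Escape block.

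Second, and more seriously, the coupling argument for Escape-phase correctness is not a routine import from \citet{jin2017escape}. For STORM you must track not only the divergence $\wb_t=\xb_t-\xb_t'$ but also the \emph{difference of the two STORM errors}, $\yb_t=\bepsilon_t-\bepsilon_t'$, which is itself governed by a stochastic recursion $\yb_{t+1}/(1-a)^{t+1}-\yb_t/(1-a)^t$ whose increments scale with the moves $\|\wb_{\tau+1}-\wb_\tau\|$ and with the Hessian-Lipschitz deviations along both trajectories. The paper establishes a dedicated bound (Lemma~\ref{Lemma:ybound}) linking $\|\yb_t\|$ to $\max_\tau\|\wb_{\tau+1}-\wb_\tau\|$, $\max_\tau\|\wb_\tau\|$, and the movement radii $D_\tau$, then runs a simultaneous induction on $(\|\wb_t\|,\|\yb_t\|)$ in Lemma~\ref{lemma:small stuck2}, choosing $b \gtrsim a^{-1}\log(1/\delta)$ and $b \gtrsim \etah^{-2}\eh^{-2}\log(1/\delta)$ so that $\|\yb_t\|$ stays a constant fraction of $\etah\gamma L\|\wb_t\|$. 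Your proposal assumes this can be borrowed wholesale from the deterministic setting; it cannot, because the dominant term in the PGD argument is deterministic, whereas here $\yb_t$ is an accumulating martingale whose variance depends on the very $\wb_t$ you are trying to grow. Closing this loop is the genuinely new technical content of the STORM case and should be made explicit.
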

\begin{remark}
Different from $\algspider$, the estimation error $\|\db_t - \nabla F(\xb_t)\|_2$ of gradient estimator $\algstorm$ is controlled by the weight parameter $a$. This allows us to come up with a simpler single-loop algorithm instead of a double-loop algorithm. 
\end{remark}

\noindent\textbf{Experiments} We conduct some experiments to validate the practical performance of $\algname$. Due to the space limit, the details of the experiment setup are deferred to Appendix \ref{sec:exp}. Our experiment results suggest that although $\algname$ does not enjoy a strictly better complexity result than existing algorithms such as NEON2-based algorithms \citep{xu2017first, allen2018neon2}, our $\algname$ outperforms them empirically due to its simpler algorithm structure.


\section{Proof Outline of the Main Results}\label{Sec: Spider-1st}
We outline the proof of Theorem~\ref{Theorem Spider} and leave the proof of Theorem~\ref{thm:storm} to the appendix.

Let $\bepsilon_t$ denote the difference between true gradient $\nabla F(\xb_t)$ and the estimated gradient $\db_t$, which is $\bepsilon_{t}:=\db_t - \nabla F(\xb_t)$. The following lemma suggests that the estimation error $\|\bepsilon_{t}\|_{2}$ can be bounded.
\begin{lemma}\label{lm:boundeps}
Under Assumptions \ref{asm:lip} and \ref{asm:bdvariance}, set $b=q=\sqrt{B}$, $\etag \leq \sigma/(2\sqrt{B}L)$, $r \leq \sigma/(2\sqrt{B}L)$ and $\barD \leq \sigma^{2}/(4BL^{2})$, then with probability at least $1-\delta$, for all $t$ we have 
\begin{align*}
\|\bepsilon_{t}\|_2 \leq \sqrt{8\log(4/\delta)}\sigma/\sqrt{B}.    
\end{align*}
Specifically, by the choice of $B$ in Theorem \ref{Theorem Spider} we have that $\|\bepsilon_{t}\|_{2} \leq \eg/2$.
\end{lemma}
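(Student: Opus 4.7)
The plan is to decompose $\bepsilon_t$ into its value at the most recent SPIDER ``checkpoint'' $s_t := \lfloor t/q\rfloor q$ (where the big-batch reset happens) plus the accumulated martingale increments from $s_t$ to $t$, bound each piece by a vector concentration inequality, and then use the ``last step shrinkage'' invariant of Algorithm \ref{alg:Pullback} to control the total squared iterate movement that appears in the second piece. A final union bound over iterations yields the claimed uniform-in-$t$ bound.

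At the checkpoint, $\db_{s_t} = \tfrac{1}{B}\sum_{i=1}^{B}\nabla f(\xb_{s_t};\bxi_{s_t}^i)$ is the mean of $B$ i.i.d.\ samples each at distance at most $\sigma$ from its mean by Assumption \ref{asm:bdvariance}. A vector Hoeffding/Azuma inequality then produces $\|\bepsilon_{s_t}\|_2 \le O\bigl(\sqrt{\log(1/\delta)}\,\sigma/\sqrt{B}\bigr)$ with probability at least $1-\delta/2$, which is on the order of the target bound.

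For $s_t < j \le t$, the SPIDER recursion yields
$$
\bepsilon_{j+1}-\bepsilon_j \;=\; \frac{1}{b}\sum_{i=1}^{b}\bigl(\nabla f(\xb_{j+1};\bxi_{j+1}^i)-\nabla f(\xb_j;\bxi_{j+1}^i)\bigr)-\bigl(\nabla F(\xb_{j+1})-\nabla F(\xb_j)\bigr),
$$
a mean-zero martingale difference conditional on the history. Since each $f(\cdot;\bxi)$ is $L$-smooth, each of the $b$ inner summands has norm at most $2L\|\xb_{j+1}-\xb_j\|_2$, so $\bepsilon_{j+1}-\bepsilon_j$ has conditional variance at most $4L^2\|\xb_{j+1}-\xb_j\|_2^2/b$. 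A Freedman-type / vector-Azuma argument then gives, with probability at least $1-\delta/2$,
$$
\|\bepsilon_t-\bepsilon_{s_t}\|_2^2 \;\le\; \tilde O(\log(1/\delta))\cdot\frac{L^2}{b}\sum_{j=s_t}^{t-1}\|\xb_{j+1}-\xb_j\|_2^2.
$$

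The crux is to bound the movement sum $S := \sum_{j=s_t}^{t-1}\|\xb_{j+1}-\xb_j\|_2^2$ over any window of length at most $q=\sqrt{B}$. Such a window is covered by at most three kinds of steps: (i) GD-phase steps with $\|\xb_{j+1}-\xb_j\|_2 = \etag \le \sigma/(2\sqrt{B}L)$; (ii) at most one perturbation step of magnitude $\le r \le \sigma/(2\sqrt{B}L)$; and (iii) escape-phase steps, for which the ``last step shrinkage'' of Algorithm \ref{alg:Pullback} guarantees $\sum_{i=m_s}^{j}\eta_i^2\|\db_i\|_2^2 \le (j-m_s+1)\barD$ at every iterate $j$ in the escape phase. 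Summing the three types over at most $q$ steps and using $\etag^2,\,r^2,\,\barD \le \sigma^2/(4BL^2)$ yields $S \le O(\sigma^2/(\sqrt{B}L^2))$. Plugging this in with $b=\sqrt{B}$ gives $\|\bepsilon_t-\bepsilon_{s_t}\|_2^2 \le \tilde O(\log(1/\delta))\sigma^2/B$, and the triangle inequality combined with the checkpoint bound produces $\|\bepsilon_t\|_2 \le \sqrt{8\log(4/\delta)}\,\sigma/\sqrt{B}$ once constants are tuned. A union bound over the polynomially many iterations performed by the algorithm upgrades this to the ``for all $t$'' statement, with the extra $\log T$ factor absorbed into $\tilde O$ inside the prescribed $B$. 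The main obstacle I expect is step (iii): the window $[s_t,t]$ may cross between GD, a perturbation, and escape phases in arbitrary order, and the escape-phase contribution cannot be bounded pointwise but only on average via the shrinkage invariant; checking carefully that the invariant survives at the end of the window (even immediately after a ``shrink'' step) and combining the heterogeneous segments is the delicate part of the argument.
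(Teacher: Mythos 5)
Your proposal follows essentially the same route as the paper: apply a vector Azuma--Hoeffding bound to the SARAH/SPIDER martingale started at the most recent checkpoint $s_t=\lfloor t/q\rfloor q$, and then control $\sum_{i=s_t}^{t-1}\|\xb_{i+1}-\xb_i\|_2^2$ by splitting the window into normalized GD steps ($\le\etag^2$), the single perturbation step ($\le r^2$), and escape-phase steps governed by the shrinkage invariant ($\le\barD$ on average), finally using $\etag^2,r^2,\barD\le\sigma^2/(4BL^2)$ and $b=q=\sqrt B$; the only cosmetic difference is that you split $\bepsilon_t=\bepsilon_{s_t}+(\bepsilon_t-\bepsilon_{s_t})$ and concentrate the two pieces separately rather than applying Azuma to the combined increment sequence in one shot as the paper does. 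The issue you flag at the end is real and is also present (tacitly) in the paper's argument: the shrinkage invariant is a prefix bound from $m_s$, so for a window $[s_t,t]$ that begins in the middle of an escape phase its contribution is bounded by the whole escape phase's cumulative movement, $(t-m_s)\barD\le\ttres\barD$, and one needs $\ttres\lesssim q$ (which does hold under the stated parameter choices) for this to still yield $O(q\barD)$; making this and the union bound over $t$ explicit are the two loose ends in both write-ups.
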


\begin{proof}[Proof of Lemma \ref{lm:boundeps}]
By $\gdspider$ presented in Algorithm \ref{alg:spider} we have
\begin{align}
    &\bepsilon_{t+1} = \frac{1}{B}\sum_{i=1}^B\big[\nabla f(\xb_{t+1}; \bxi_{t+1}^i) - \nabla F(\xb_{t+1})\big],&t\ \text{mod}\ q = 0,\notag \\
    &\bepsilon_{t+1} = \bepsilon_t + \frac{1}{b}\sum_{i=1}^b\big[\nabla f(\xb_{t+1}; \bxi_{t+1}^i) - \nabla f(\xb_{t}; \bxi_{t+1}^i) - \nabla F(\xb_{t+1}) + \nabla F(\xb_t)\big],&t\ \text{mod}\ q \neq 0.\notag
\end{align}
By the $L$-smoothness in Assumption \ref{asm:lip} we have
\begin{align}\nonumber
    \big\|\nabla f(\xb_{t+1}; \bxi_{t+1}^i) - \nabla f(\xb_{t}; \bxi_{t+1}^i) - \nabla F(\xb_{t+1}) + \nabla F(\xb_t)\big\|_2 \leq 2L\|\xb_{t+1} - \xb_t\|_2.
\end{align}
Then by Assumption \ref{asm:bdvariance} and Azuma–Hoeffding inequality (See Lemma \ref{lm:ah-vec} for details), with probability at least $1-\delta$, we have
\begin{align}
    \forall t>0,\ \|\bepsilon_{t+1}\|_2^2 \leq 4\log(4/\delta)\bigg(\frac{\sigma^2}{B} + \frac{4L^2}{b}\sum_{i=\lfloor t/q \rfloor q}^t \|\xb_{i+1} - \xb_{i}\|_2^2\bigg)\label{eq:SpiderD}.
\end{align}
Notice that $\gdspider$ is parallel with $\algname$. Thus we need to further bound \eqref{eq:SpiderD} by considering iterates in three different cases: (1) for step $i$ in the \gdphase, by normalized gradient descent we have $\|\xb_{i+1} - \xb_i\|_2^2 \leq \etag^2$; (2) for $i = m_s$ for some $s$ in the \ncphase, we have $\|\xb_{i+1} - \xb_i\|_2^2 \leq r^{2}$; and (3) for the other steps in \ncphase, we have on average, $\|\xb_{i+1} - \xb_i\|_2^2 \leq \barD$ due to the ``Last
Step Shrinkage" scheme. Therefore we have
\begin{align}\nonumber
    \|\bepsilon_{t+1}\|_2^2 \leq 4\log(4/\delta)\bigg(\frac{\sigma^2}{B} + \frac{4L^2}{b}\cdot q\cdot \max\{\etag^2, r^{2}, \barD\}\bigg) \leq \frac{8\log(4/\delta)\sigma^{2}}{B}.
\end{align}
\end{proof}
Lemma~\ref{lm:boundeps} guarantees that with high probability $\|\nabla F(\xb_{t})\|_{2}\leq \|\db_{t}\|_{2}+\epsilon$, which ensures $\|\nabla F(\xb_{m_{s}})\|_{2}\leq 2\epsilon$ when the algorithm terminates. Next lemma bounds the function value decrease in the \gdphase, which is also valid for \algname-STORM. 

\begin{lemma}\label{lm:gradientdescent}
Suppose the event in Lemma \ref{lm:boundeps} holds, $\eta \leq \epsilon/(2L)$, then for any $s$, we have
\begin{align}
    F(\xb_{t_s}) - F(\xb_{m_{s}}) \geq \frac{(m_{s} - t_{s})\eta\eg}{8}.\notag
\end{align}
The choice of $\eta$ in Theorem \ref{Theorem Spider} further implies that the loss decreases by at least $\sigma \epsilon/(16\sqrt{B}L)$ per step on average.
\end{lemma}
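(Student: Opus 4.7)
}
The plan is to establish a per-step descent inequality via $L$-smoothness and then telescope over the GD phase. Throughout the GD phase we know three things: (i) by construction $\xb_{t+1} - \xb_t = -\eta_t \db_t$ with $\eta_t = \eta/\|\db_t\|_2$, so the step length is exactly $\|\xb_{t+1}-\xb_t\|_2 = \eta$; (ii) the loop condition ensures $\|\db_t\|_2 > \epsilon$; (iii) Lemma~\ref{lm:boundeps} yields $\|\bepsilon_t\|_2 \le \epsilon/2$ with high probability. These three facts will drive everything.

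First I will apply the standard $L$-smoothness descent inequality to get
\begin{align*}
F(\xb_{t+1}) \le F(\xb_t) - \eta_t \langle \nabla F(\xb_t), \db_t\rangle + \tfrac{L}{2}\|\xb_{t+1}-\xb_t\|_2^2
= F(\xb_t) - \eta_t \langle \nabla F(\xb_t), \db_t\rangle + \tfrac{L\eta^2}{2}.
\end{align*}
Then I will decompose the inner product using $\nabla F(\xb_t) = \db_t - \bepsilon_t$ and Cauchy--Schwarz:
\begin{align*}
\eta_t \langle \nabla F(\xb_t), \db_t\rangle \ge \eta_t \|\db_t\|_2^2 - \eta_t \|\bepsilon_t\|_2 \|\db_t\|_2
= \eta \|\db_t\|_2 - \eta \|\bepsilon_t\|_2.
\end{align*}
Plugging in $\|\db_t\|_2 > \epsilon$ and $\|\bepsilon_t\|_2 \le \epsilon/2$ yields $F(\xb_{t+1}) \le F(\xb_t) - \eta\epsilon/2 + L\eta^2/2$, and the step-size condition $\eta \le \epsilon/(2L)$ absorbs the quadratic term into (at worst) $\eta\epsilon/4$, giving a per-step decrease of at least $\eta\epsilon/4 \ge \eta\epsilon/8$.

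Summing the per-step inequality from $t=t_s$ to $t=m_s-1$ telescopes to $F(\xb_{t_s}) - F(\xb_{m_s}) \ge (m_s - t_s)\eta\epsilon/8$, which is the claim. The ``per step on average'' remark then follows by substituting the specific choice $\eta = \eta_g = \sigma/(2\sqrt{B}L)$ from Theorem~\ref{Theorem Spider} into $\eta\epsilon/8$. The only real subtlety is that the descent inequality relies on the high-probability event of Lemma~\ref{lm:boundeps}; this is exactly why the statement is prefaced with ``suppose the event in Lemma~\ref{lm:boundeps} holds.'' No calculation beyond a single smoothness step and Cauchy--Schwarz is required, so I do not anticipate a substantive obstacle — the lemma is essentially a clean bookkeeping consequence of normalized gradient descent together with the variance-reduction error bound.
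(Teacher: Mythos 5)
Your proof is correct and reaches the stated conclusion (in fact with a slightly better constant, $\eta\eg/4$ per step rather than $\eta\eg/8$, which still implies the lemma). The route is genuinely different from the paper's: you bound the cross-term $\eta_t\langle \nabla F(\xb_t),\db_t\rangle$ directly with a linear Cauchy--Schwarz decomposition $\langle \nabla F(\xb_t),\db_t\rangle \ge \|\db_t\|_2^2 - \|\bepsilon_t\|_2\|\db_t\|_2$, so everything cancels against the normalized step length $\|\xb_{t+1}-\xb_t\|_2=\eta$. The paper instead invokes its Lemma~\ref{lm:basic}, which packages the smoothness step together with the polarization identity $-2\langle \nabla F(\xb_t),\db_t\rangle = -\|\nabla F(\xb_t)\|_2^2 - \|\db_t\|_2^2 + \|\bepsilon_t\|_2^2$ to produce a bound with only \emph{squared} quantities ($\|\db_t\|_2^2$, $\|\bepsilon_t\|_2^2$, $\|\xb_{t+1}-\xb_t\|_2^2$). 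For the GD phase your linear decomposition is cleaner, since $\|\db_t\|_2>\eg$ makes the signal term directly comparable to the error. What the paper's version buys is reusability: Lemma~\ref{lm:basic} is applied again verbatim in the escape-phase analysis (Lemma~\ref{lm:localization}), where the step is \emph{not} normalized and one needs the negative squared-movement term $-(1/(2\eta_i)-L/2)\|\xb_{i+1}-\xb_i\|_2^2$ to telescope against the accumulative movement $\barD$. Your decomposition does not yield that structure, so the paper factors out the squared form once and uses it in both places. Either way, as a self-contained argument for this lemma, your proof is fine.
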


\begin{proof}[Proof of Lemma \ref{lm:gradientdescent}]
For any $ t_s \leq t < m_{s}$, we can show the following property (See Lemma~\ref{lm:basic}),
\begin{align}\label{eq:basic}
F(\xb_{t+1}) \leq F(\xb_t) - \frac{\eta_t}{2}\|\db_t\|_2^2 + \frac{\eta_t}{2}\|\bepsilon_t\|_2^2 + \frac{L}{2}\|\xb_{t+1} - \xb_t\|_2^2.  
\end{align}
Plugging the update rule $\xb_{t+1} = \xb_{t} - \eta_{t}\db_{t}$ into \eqref{eq:basic} gives,
\begin{align*}
F(\xb_{t+1}) &=  F(\xb_t) -\|\xb_{t+1} - \xb_t\|_2^2\bigg(\frac{1}{2\eta_t} - \frac{L}{2}\bigg) +\frac{\eta_t\|\bepsilon_t\|_2^2}{2}\notag \\
& \leq F(\xb_t) -\etag^2\bigg(\frac{1}{2\eta_t} - \frac{L}{2}\bigg) + \frac{\eta_{t}\epsilon^{2}}{8},\notag\\
&\leq F(\xb_t) - \frac{\eta \epsilon}{8}
\end{align*}
where the first inequality holds due to the fact that $\eta_t = \eta/\|\db_t\|_2$ and $\|\bepsilon_{t}\|_{2}\leq \epsilon/2$, and the second inequality is by $\eta_{t} = \eta/\|\db_t\|_2 \leq \eta/\epsilon \leq 1/(2L)$.
\end{proof}

The following lemma shows that if $\xb_{m_{s}}$ is a saddle point, then with high probability, the algorithm will break during the $\ncphase$ and set \textsc{find} to be false. Thus, whenever $\xb_{m_{s}}$ is not a local minima, the algorithm cannot terminate.



\begin{lemma}\label{lm:hessiandescent}
Under Assumptions \ref{asm:lip} and \ref{asm:bdvariance}, set perturbation radius  $r \leq L\etah\eh/(C\rho)$,step size $\etah\leq \min\{1/(16L\log(\etah\epsilon_{H}\sqrt{d} LC^{-1}\rho^{-1}\delta^{-1}r^{-1})), 1/(8CL\log \ttres) \} = \tilde O(L^{-1})$, $\ttres  = 2\log(\etah\epsilon_{H}\sqrt{d} LC^{-1}\rho^{-1}\delta^{-1}r^{-1})/(\etah\eh) = \tilde O(\etah^{-1}\eh^{-1})$,  and $\barD < C^{2}L^{2}\etah^{2}\eh^{2}/(\rho^{2}\ttres^{2})$, where $C = O(\log (d\ttres/\delta) = \tilde O(1)$. We also set $b = q =\sqrt{B}\geq 16\log(4/\delta)/(\etah^{2}\eh^{2})$. Then for any $s$, when $\lambda_{\min}(\nabla^2 F(\xb_{m_s})) \leq -\eh$,  with  probability at least $1-2\delta$ algorithm breaks in the $\ncphase$.
\end{lemma}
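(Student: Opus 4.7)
The plan is to adapt the coupling argument of Jin et al.\ (2017) for perturbed gradient descent to our stochastic setting, relying on the gradient-estimation bound of Lemma~\ref{lm:boundeps} to control the noise. Let $\vb$ be a unit eigenvector of $\nabla^2 F(\xb_{m_s})$ corresponding to an eigenvalue $\leq -\eh$. I would introduce a coupled ``ghost'' run of the \ncphase starting from the same point $\xb_{m_s}$ but with a mirrored perturbation $\bxi' = \bxi - 2\langle \bxi, \vb\rangle \vb$ and using the same stochastic samples thereafter, generating an iterate sequence $\{\xb_t'\}$ alongside $\{\xb_t\}$.

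I would then argue by contradiction: suppose neither sequence triggers the ``Last Step Shrinkage'' break within $\ttres$ iterations. By the break condition, $\sum_{i=m_s}^{t}\eta_i^2\|\db_i\|_2^2 \leq (t-m_s+1)\barD$ holds at every step, so Cauchy--Schwarz together with the stated choice of $\barD$ localizes both trajectories in a ball around $\xb_{m_s}$ of radius $O(L\etah\eh\sqrt{\ttres}/\rho)$. By the Hessian-Lipschitz assumption, this ensures $\|\nabla^2 F(\xb_t) - \nabla^2 F(\xb_{m_s})\|_2 \ll \eh$ along both paths.

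Writing $\wb_t = \xb_t - \xb_t'$ and $\Hb = \nabla^2 F(\xb_{m_s})$, subtracting the two updates and Taylor-expanding around $\xb_{m_s}$ gives a recursion of the form
\begin{align*}
\wb_{t+1} = (\Ib - \etah \Hb)\wb_t + \etah \zb_t,
\end{align*}
where $\zb_t$ collects a Hessian-drift term of order $\rho \max\{\|\xb_t - \xb_{m_s}\|_2, \|\xb_t' - \xb_{m_s}\|_2\}\cdot \|\wb_t\|_2$ and the coupled estimator residual $\bepsilon_t - \bepsilon_t'$. Since the two runs share stochastic samples, the residual satisfies the same SARAH/SPIDER recursion as in the proof of Lemma~\ref{lm:boundeps} but now driven only by $\|\wb_i\|_2$ rather than by $\sigma$. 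Iterating the recursion and projecting onto $\vb$, the factor $(1+\etah\eh)^{t-m_s}$ governs exponential growth in the negative-curvature direction. Anti-concentration of the uniform perturbation on $B_0(r)$ yields $|\langle \bxi - \bxi', \vb\rangle| \geq \Omega(r\delta/\sqrt{d})$ with probability at least $1-\delta$, so after $\ttres = \tilde O(1/(\etah\eh))$ steps the projection of $\wb_\ttres$ onto $\vb$ exceeds twice the diameter of the localization ball. This contradicts both sequences staying localized, so with probability at least $1-2\delta$ at least one of them triggers the break; a symmetry argument (the pair $(\bxi,\bxi')$ is distributionally identical to $(\bxi',\bxi)$) then transfers this to the actual algorithm's run.

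The main obstacle is controlling $\zb_t$: both the Hessian-drift contribution and the residual estimator noise must remain dominated by the exponential growth $(1+\etah\eh)^{t-m_s}$ along $\vb$. The drift is absorbed by the tight choice of $\barD$, which shrinks the localization radius so that $\rho \cdot \text{radius} \ll \eh$, while the residual noise is handled by showing, via the SARAH/SPIDER recursion applied to the \emph{difference} process, that $\|\bepsilon_t - \bepsilon_t'\|_2$ admits a bound analogous to \eqref{eq:SpiderD} but scaled by $\|\wb_i\|_2$. Balancing these error terms against the growth factor so as to yield exactly the parameter choices of $\etah$, $\ttres$, $r$, and $\barD$ stated in the lemma is the bulk of the bookkeeping.
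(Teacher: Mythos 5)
Your high-level plan matches the paper's: couple a ghost run of the $\ncphase$ to the actual one, argue by contradiction that if the break is never triggered then the Last Step Shrinkage condition plus Cauchy--Schwarz pins the iterates in a small ball, yet a negative-curvature recursion forces $\wb_t = \xb_t - \xb_t'$ to grow exponentially along the bad eigendirection, giving a contradiction. The paper discharges the exponential-growth and ``small stuck region'' step by citing Lemma~\ref{lm:hessiandescentbaisc} (Lemma~6 of \citealt{li2019ssrgd}), which applies verbatim because $\algname$'s Escape phase coincides with SSRGD's; you propose to re-derive it from scratch through a recursion for $\wb_t$ and a SPIDER-type bound on $\bepsilon_t - \bepsilon_t'$. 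That is more work but not wrong in principle, and it is essentially what the paper does for the STORM variant in Lemma~\ref{lemma:small stuck2}.

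Where your argument actually breaks is the final probability step. You couple $\bxi$ not to a \emph{translate} but to its \emph{reflection} $\bxi' = \bxi - 2\langle\bxi,\vb\rangle\vb$, and then invoke exchangeability of $(\bxi,\bxi')$ and $(\bxi',\bxi)$ to ``transfer'' escape to the real run. Exchangeability only says $\PP(\bxi\text{-run escapes}) = \PP(\bxi'\text{-run escapes})$; combined with $\PP(\text{at least one escapes}) \ge 1-\delta$, inclusion--exclusion gives $\PP(\bxi\text{-run escapes}) \ge (1-\delta)/2$, a constant, not the $1-2\delta$ the lemma asserts. The correct inference (as in \citealt{jin2017escape,li2019ssrgd} and the paper's own proof) uses the \emph{translation} coupling $\xb_{m_s+1} - \xb_{m_s+1}' = r_0\eb_1$ with $r_0 = \delta r/\sqrt d$: it shows that every line through the ball in direction $\eb_1$ meets the stuck set $\cS$ in a segment of length at most $r_0$, hence $\mathrm{vol}(\cS)/\mathrm{vol}(\mathbb{B}(r)) \le \delta$, and the uniformly perturbed point lands in $\cS$ with probability at most $\delta$. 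Your constant-probability bound cannot be union-bounded over the $\tilde O(\Delta B/\sigma^2)$ epochs required to prove Theorem~\ref{Theorem Spider}, so it would not support the final complexity claim without further amplification.

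A smaller slip: in the localization step, Cauchy--Schwarz together with the shrinkage constraint gives $\|\xb_t - \xb_{m_s+1}\|_2 \le (t-m_s)\sqrt{\barD} \le \ttres\sqrt{\barD} = O(L\etah\eh/\rho)$; the radius $O(L\etah\eh\sqrt{\ttres}/\rho)$ you wrote appears to bound $\sqrt{\sum_i\|\xb_{i+1}-\xb_i\|_2^2}$ rather than the actual displacement.
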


\begin{proof}[Proof of Lemma~\ref{lm:hessiandescent}] Let $\{\xb_{t}\}, \{\xb_{t}'\}$ be two coupled sequences by running $\algspider$ \ from $\xb_{m_{s}+1}, \xb_{m_{s}+1}'$ with $\xb_{m_{s}+1}-\xb_{m_{s}+1}' = r_{0}\eb_{1}$, where $\xb_{m_{s}+1}, \xb_{m_{s}+1}'\in \mathbb{B}_{\xb_{m_{s}}}(r)$. Here $r_{0} = \delta r/\sqrt{d}$ and $\eb_{1}$ denotes the smallest eigenvector direction of Hessian $\nabla^{2}F(\xb_{m_{s}})$. 

When $\lambda_{\min}(\nabla^2 F(\xb_{m_s})) \leq -\eh$, under the parameter choice in Lemma~\ref{lm:hessiandescent}, we can show that at least one of two sequence will escape the saddle point (See Lemma \ref{lm:hessiandescentbaisc}). To be specific, with probability at least $1-\delta$, 
\begin{align}
    \max_{m_s < t <m_s +\ttres}\{\|\xb_t - \xb_{m_s+1}\|_2, \|\xb_t' - \xb_{m_s+1}'\|_2\} \geq \frac{L\etah\eh}{C\rho}.\label{eq:region}
\end{align} 
\eqref{eq:region} suggests that for any two points $\xb_{m_{s}+1}, \xb_{m_{s}+1}'$ satisfying $\xb_{m_{s}+1}-\xb_{m_{s}+1}' = r_{0}\eb_{1}$, at least one of them will generate a sequence of iterates which finally move more than $L\etah\eh/(C\rho)$. Thus, let $\cS\subseteq \mathbb{B}_{m_{s}}(r)$ be the set of $\xb_{m_{s}+1}$ which will not generate a sequence of iterates moving more than $\frac{L\etah\eh}{C\rho}$, then in the direction $\eb_1$, the "thickness" of $\cS$ is smaller than $r_0$. Simple integration shows that the ratio between the volume of $\cS$ and $\mathbb{B}_{m_{s}}(r)$ is bounded by $\delta$. Therefore, since $\xb_{m_{s}+1}$ is generated from $\xb_{m_{s}}$ by adding a uniform random noise in ball $\mathbb{B}_{m_{s}}(r)$, we conclude that the probability for $\xb_{m_{s}+1}$ locating in $\cS$ is less than $\delta$. 
Applying union bound, we get with probability at least $1-2\delta$,
\begin{align}
\exists m_s < t <m_s +\ttres,  \|\xb_t - \xb_{m_s+1}\|_2  \geq \frac{L\etah\eh}{C\rho}. \label{eqref:highpbstationary}
\end{align}
Denote $\mathcal{E}$ as the event that the algorithm does not break in the $\ncphase$. Then under $\mathcal{E}$, for any $m_s < t <m_s +\ttres$, we have
\begin{align}
\|\xb_t - \xb_{m_s+1}\|_2  \leq \sum_{i=m_s+1}^{t-1}\|\xb_{i+1} - \xb_i\|_2 \leq \sqrt{(t-m_s)\sum_{i=m_s}^{t-1} \|\xb_{i+1} - \xb_i\|_2^2} \leq (t-m_s)\sqrt{\barD},\notag
\end{align}
where the first inequality is due to the triangle inequality and the second inequality is due to Cauchy-Schwarz inequality. Thus, by the choice of $\ttres$ and $\barD$, we have
\begin{align*}
    \|\xb_t - \xb_{m_s+1}\|_2 \leq(t-m_{s})\sqrt{\barD} \leq \ttres \sqrt{\barD} <C\cdot\frac{L\etah\eh}{\rho}.
\end{align*}
Then by \eqref{eqref:highpbstationary}, we know that $\mathbb{P}(\cE)\leq 2\delta$. Therefore when $\lambda_{\min}(\nabla^2 F(\xb_{m_s})) \leq -\eh$,  with  probability at least $1-2\delta$, $\algname$ breaks in the $\ncphase$. 
\end{proof}

Next lemma bounds the decreasing value of the function during the $\ncphase$ if the algorithm breaks in the $\ncphase$(i.e. \textsc{find} is false).

\begin{lemma}[localization]\label{lm:localization}
Suppose the result of Lemma \ref{lm:boundeps} holds, set $\etah \leq 1/ \big(L\sqrt{128\log(4/\delta)}\big)$,  $r \leq \min\big\{\log(4/\delta)\etah\sigma^{2}/(2B\epsilon), \sqrt{2\log(4/\delta)\etah\sigma^2/(BL)}\big\}$,  and $\barD = \sigma^{2}/(4BL^{2})$.
Suppose the algorithm breaks in the $\ncphase$ starting at $\xb_{m_s}$, then we have 
\begin{align*}
    F(\xb_{m_{s}}) - F(\xb_{t_{s+1}}) \geq (t_{s+1}-m_s)\frac{\log(4/\delta)\etah\sigma^2}{B}.
\end{align*}
\end{lemma}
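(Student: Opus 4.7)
The plan is to combine a per-step descent inequality with the fact that the algorithm can only break in the $\ncphase$ when the accumulated squared movement has become large, which forces a proportional function-value decrease. First I would invoke the basic descent inequality from Lemma~\ref{lm:basic}, substitute $\xb_{t+1}-\xb_t=-\eta_t\db_t$, and use $\eta_t\leq\etah\leq 1/(L\sqrt{128\log(4/\delta)})\leq 1/(2L)$ to convert the coefficient $1/(2\eta_t)-L/2$ into at least $1/(4\eta_t)$. This yields the standard form
\[ F(\xb_t) - F(\xb_{t+1}) \geq \frac{\|\xb_{t+1}-\xb_t\|_2^2}{4\eta_t} - \frac{\eta_t\|\bepsilon_t\|_2^2}{2} \]
for every regular iterate $t \in \{m_s+1,\ldots,t_{s+1}-1\}$. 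The perturbation step $t=m_s$ has to be handled separately: $L$-smoothness gives $F(\xb_{m_s})-F(\xb_{m_s+1})\geq -\|\nabla F(\xb_{m_s})\|_2\cdot r - Lr^2/2$, and combining the GD-phase exit condition $\|\db_{m_s}\|_2\leq\epsilon$ with Lemma~\ref{lm:boundeps} yields $\|\nabla F(\xb_{m_s})\|_2 \leq 3\epsilon/2$, so this one-shot loss is bounded by $3\epsilon r/2 + Lr^2/2$.

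Next I would sum the descent inequality over $t=m_s+1,\dots,t_{s+1}-1$, replace $1/(4\eta_t)$ by the uniform lower bound $1/(4\etah)$, and use $\|\bepsilon_t\|_2^2 \leq 8\log(4/\delta)\sigma^2/B$ from Lemma~\ref{lm:boundeps}. This produces
\[ F(\xb_{m_s+1})-F(\xb_{t_{s+1}}) \geq \frac{1}{4\etah}\sum_{t=m_s+1}^{t_{s+1}-1}\|\xb_{t+1}-\xb_t\|_2^2 - \frac{4(t_{s+1}-m_s-1)\log(4/\delta)\etah\sigma^2}{B}. \]
The crucial input then comes from the break condition: at iteration $t_{s+1}-1$ the algorithm shrinks $\eta_{t_{s+1}-1}$ so that $\sum_{i=m_s}^{t_{s+1}-1}\eta_i^2\|\db_i\|_2^2 = (t_{s+1}-m_s)\barD$ holds exactly. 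Recognising that $\eta_i^2\|\db_i\|_2^2=\|\xb_{i+1}-\xb_i\|_2^2$ and that the $i=m_s$ contribution equals $\|\bxi\|_2^2\leq r^2$ then yields $\sum_{t=m_s+1}^{t_{s+1}-1}\|\xb_{t+1}-\xb_t\|_2^2 \geq (t_{s+1}-m_s)\barD - r^2$. Plugging $\barD=\sigma^2/(4BL^2)$ converts the main positive term into $(t_{s+1}-m_s)\sigma^2/(16BL^2\etah) - r^2/(4\etah)$.

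The remaining work is essentially bookkeeping: I would collect the overhead terms $3\epsilon r/2$, $Lr^2/2$, $r^2/(4\etah)$, and $4(t_{s+1}-m_s-1)\log(4/\delta)\etah\sigma^2/B$, and verify that they are dominated by $(t_{s+1}-m_s)\sigma^2/(16BL^2\etah)$ with enough slack to leave behind the claimed $(t_{s+1}-m_s)\log(4/\delta)\etah\sigma^2/B$. The condition $\etah\leq 1/(L\sqrt{128\log(4/\delta)})$ is tuned precisely so that $\sigma^2/(16BL^2\etah)$ beats $5\log(4/\delta)\etah\sigma^2/B$ with the required margin, while the two conditions on $r$ (namely $r\leq \log(4/\delta)\etah\sigma^2/(2B\epsilon)$ and $r\leq\sqrt{2\log(4/\delta)\etah\sigma^2/(BL)}$) are chosen so that each of the three $r$-dependent overhead terms is at most a constant multiple of $\log(4/\delta)\etah\sigma^2/B$; using $t_{s+1}-m_s\geq 1$ then absorbs them into the $(t_{s+1}-m_s)$-scaled target. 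I expect this final constant-juggling to be the main, if unglamorous, obstacle, since every parameter hypothesis in the lemma must be invoked at exactly the right place in order to close the bound.
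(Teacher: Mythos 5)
Your road map matches the paper's: per-step descent from Lemma~\ref{lm:basic}, telescoping over $i=m_s+1,\dots,t_{s+1}-1$, inserting the break-condition value of the accumulated squared movement, handling the perturbation step $i=m_s$ separately, and collecting constants from the $\etah$- and $r$-hypotheses.

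The final bookkeeping, however, does not close as you describe. You subtract $r^2$ from the break-condition identity, producing the overhead term $r^2/(4\etah)$, and you claim that the two $r$-constraints named in the lemma bound this by a constant multiple of the per-step target $E := \log(4/\delta)\etah\sigma^2/B$. That is false. At the tightest allowed value $r^2 = 2\log(4/\delta)\etah\sigma^2/(BL)$ one has $r^2/(4\etah) = \log(4/\delta)\sigma^2/(2BL)$, whose ratio to $E$ is $1/(2L\etah)$; by the hypothesis $\etah \le 1/\big(L\sqrt{128\log(4/\delta)}\big)$ this ratio is at least $\sqrt{32\log(4/\delta)}$, which is unbounded as $\delta\to 0$, so it is not a constant. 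To absorb $r^2/(4\etah)$ you must also invoke the additional constraint $r \le \sigma/(2\sqrt{B}L)$, which is a hypothesis of Lemma~\ref{lm:boundeps} and of Theorem~\ref{Theorem Spider}; it gives $r^2 \le \barD$, so the perturbation consumes at most one step of the positive budget $\barD/(4\etah)$, and because the break can only occur with $t_{s+1}-m_s \ge 2$ (not $\ge 1$, as you wrote), one step of budget remains and your remaining estimates do close. The paper sidesteps this entirely by writing the break identity as $\sum_{i=m_s+1}^{t_{s+1}-1}\|\xb_{i+1}-\xb_i\|_2^2 = (t_{s+1}-m_s-1)\barD$, which silently drops the $i=m_s$ contribution so the $r^2/(4\etah)$ term never appears; your version of the accounting is more explicit, but it requires the extra $r$-constraint to work.
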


\begin{proof}[Proof of Lemma \ref{lm:localization}]
For any $m_s < i <t_{s+1}$, we can show the following property (See Lemma~\ref{lm:basic}),
\begin{align}\label{eq:basic2}
F(\xb_{i+1}) \leq F(\xb_i) - \frac{\eta_i}{2}\|\db_i\|_2^2 + \frac{\eta_i}{2}\|\bepsilon_i\|_2^2 + \frac{L}{2}\|\xb_{i+1} - \xb_i\|_2^2.  
\end{align}
Plugging the update rule $\xb_{i+1} = \xb_{i} - \eta_{i}\db_{i}$ into \eqref{eq:basic2} gives,
\begin{align}
F(\xb_{i+1}) 
&\leq F(\xb_i)+ \frac{\eta_i}{2}\|\bepsilon_i\|_2^2 - \bigg(\frac{1}{2\eta_i} - \frac{L}{2}\bigg)\|\xb_{i+1} - \xb_i\|_2^2 \notag \\
&\leq F(\xb_i)+ \frac{\etah}{2}\frac{8\log(4/\delta)\sigma^2}{B} - \frac{1}{4\etah}\|\xb_{i+1} - \xb_i\|_2^2\label{eq:local11}
\end{align}
where the the second inequality holds due to Lemma~\ref{lm:boundeps} and $\eta_i \leq \etah \leq 1/(2L)$ for any $m_s < i < t_{s+1}$. Telescoping $\eqref{eq:local11}$ from $i=m_{s}+1$ to $t_{s+1}-1$, we have
\begin{align*}
F(\xb_{t_{s+1}}) &\leq F(\xb_{m_{s}+1}) + 4\etah \log(4/\delta)(t_{s+1}-m_s-1)\frac{\sigma^2}{B} - \frac{1}{4\etah}\sum_{i=m_{s}+1}^{t_{s+1}-1}\|\xb_{i+1} - \xb_i\|_2^2.
\end{align*}
Finally, we have
\begin{align}
    F(\xb_{m_{s}+1}) - F(\xb_{t_{s+1}}) &\geq \sum_{i=m_{s}+1}^{t_{s+1}-1}\frac{\|\xb_{i+1} - \xb_i\|_2^2}{4\etah} - 4\log(4/\delta)(t_{s+1}-m_s-1)\etah \frac{\sigma^2}{B} \notag \\
    &= (t_{s+1}-m_s-1)\bigg(\frac{\barD}{4\etah} - \frac{4\log(4/\delta)\etah\sigma^2}{B}\bigg)\notag\\
    &= (t_{s+1}-m_s-1)\bigg(\frac{\sigma^{2}}{16\etah BL^{2}} - \frac{4\log(4/\delta)\etah\sigma^2}{B}\bigg)\notag\\
    &\geq (t_{s+1}-m_s-1)\frac{4\log(4/\delta)\etah\sigma^2}{B}\label{eq:Afternoise},
\end{align}
where the last inequality is by the choice of $\etah \leq 1/ \big(L\sqrt{128\log(4/\delta)}\big)$.
For $i = m_s$, we have (See Lemma~\ref{lm:basic})
\begin{align}
F(\xb_{m_{s}+1}) &\leq F(\xb_{m_{s}}) + (\|\db_{m_{s}}\|_2 + \|\bepsilon_{m_{s}}\|_{2}+ Lr/2)r.\label{eq:basic3}
\end{align}
Plugging $\|\db_{m_{s}}\|_2 \leq \epsilon$ and $\|\bepsilon_{m_{s}}\|_2 \leq \epsilon/2$ into \eqref{eq:basic3} gives,  
\begin{align}
F(\xb_{m_{s}+1}) &\leq F(\xb_{m_{s}}) + (4\epsilon+ Lr/2)r\leq F(\xb_{m_{s}}) + \frac{2\log(4/\delta)\etah\sigma^2}{B}\label{eq:Atnoise},
\end{align}
where the last inequality is by the choice $r \leq \min\big\{\log(4/\delta)\etah\sigma^{2}/(2B\epsilon), \sqrt{2\log(4/\delta)\etah\sigma^2/(BL)}\big\}$.
Combining \eqref{eq:Afternoise} and \eqref{eq:Atnoise} and applying $t_{s+1}-m_s \geq 2$ gives,
\begin{align*}
F(\xb_{m_{s}}) - F(\xb_{t_{s+1}}) \geq  [4(t_{s+1}-m_s-1)-2]\frac{\log(4/\delta)\etah\sigma^2}{B} \geq (t_{s+1}-m_s)\frac{\log(4/\delta)\etah\sigma^2}{B}. 
\end{align*}
\end{proof}
Now, we can provide the proof of Theorem~\ref{Theorem Spider} .

\begin{proof}[Proof of Theorem~\ref{Theorem Spider}]
The analysis can be divided into two phases, i.e., $\gdphase$ and $\ncphase$.
The function value will decrease at different rates in different phases. 

\noindent\textbf{$\gdphase$}: In this phase, $\|\db_{t}\|_{2} \geq \epsilon$ and $\|\bepsilon\|_{2}\leq \epsilon/2$ due to Lemma~\ref{lm:boundeps}. Thus the gradients of the function are large $\|\nabla F(\xb)\|_{2} \geq \epsilon/2$. Lemma~\ref{lm:gradientdescent} further shows that the loss decreases by at least $ \sigma\epsilon/(16\sqrt{B}L)$ on average.

\noindent\textbf{$\ncphase$:} In this phase, the starting point $\xb_{m_{s}}$ satisfies $\|\nabla F(\xb_{m_{s}})\|_{2}\leq \|\db_{m_{s}}\|_{2}+\|\bepsilon_{t}\|_{2}\leq 2\epsilon$. If $\xb_{m_{s}}$ is a saddle point with 
$\lambda_{\min}(\nabla^2 F(\xb_{m_s})) \leq -\eh$, then by Lemma~\ref{lm:hessiandescent}, with high probability $\algspider$ will break $\ncphase$, set \textsc{find}$\leftarrow$False and begin a new $\gdphase$. Further by Lemma~\ref{lm:localization}, the loss will decrease by at least $\log(4/\delta)\etah\sigma^2/B$ per step on average.

\noindent\textbf{Sample Complexity:} Note that the total amount for function value can decrease is at most $\Delta = F(\xb_0) - \inf_\xb F(\xb) < +\infty$. So the algorithm must end and find an $(\epsilon, \eh)$-approximate local minimum within $\tilde O(\sqrt{B}L\Delta\sigma^{-1} \epsilon^{-1} + BL\Delta\sigma^{-2})$ iterations. Notice that on average we sample $\max\{b, B/q\} = \sqrt{B}$ examples per iteration,
so the total sample complexity is $\tilde O(BL\Delta\sigma^{-1} \epsilon^{-1} + B^{3/2}L\Delta\sigma^{-2})$.
Plugging in the choice of $B = \tilde{O}(\sigma^{2}\epsilon^{-2} + \sigma^{2}\rho^{2}\eh^{-4})$ in Theorem~\ref{Theorem Spider}, we have the total gradient complexity
\begin{align*}
\tilde O\bigg(\frac{\sigma L\Delta}{\epsilon^{3}} + \frac{\sigma \rho^{2}L\Delta}{\epsilon\epsilon_{H}^{4}} + \frac{\sigma\rho^{3}L\Delta}{\epsilon_{H}^{6}}\bigg) = \tilde O\bigg(\frac{\sigma L\Delta}{\epsilon^{3}} + \frac{\sigma\rho^{3}L\Delta}{\epsilon_{H}^{6}}\bigg),  
\end{align*}
where the equation is due to the Young's inequality. 

\end{proof}

\section{Conclusions and Future Work}
In this paper, we propose a perturbed stochastic gradient framework named $\algname$ for finding local minima. 
$\algname$ 
can find $(\eg,\eh)$-approximate local minima within $\tilde O(\eg^{-3}+\eh^{-6})$ stochastic gradient evaluations, which matches the best possible complexity results in the classical $\eh = \sqrt{\eg}$ setting. Our results show that simple perturbed gradient methods can be as efficient as more sophisticated algorithms for finding local minima in the classical setting. Recall that there still exists a mismatch between our upper bound $(\epsilon^{-3}+ \epsilon_H^{-6})$ and the lower bound $(\epsilon^{-3}+ \epsilon_H^{-5})$ in the general case of $\epsilon_H$, and we leave it as a future work to close this gap. 

\acks{We thank the anonymous reviewers for their helpful comments. ZC, DZ and QG are partially supported by the National Science Foundation IIS-2008981, CAREER Award 1906169, and AWS Machine Learning Research Award. The views and conclusions contained in this paper are those of the authors and should not be interpreted as representing any funding agencies.}

\bibliography{LENA}

\begin{thebibliography}{33}
\providecommand{\natexlab}[1]{#1}
\providecommand{\url}[1]{\texttt{#1}}
\expandafter\ifx\csname urlstyle\endcsname\relax
  \providecommand{\doi}[1]{doi: #1}\else
  \providecommand{\doi}{doi: \begingroup \urlstyle{rm}\Url}\fi

\bibitem[Agarwal et~al.(2017)Agarwal, Allen-Zhu, Bullins, Hazan, and
  Ma]{agarwal2017finding}
Naman Agarwal, Zeyuan Allen-Zhu, Brian Bullins, Elad Hazan, and Tengyu Ma.
\newblock Finding approximate local minima faster than gradient descent.
\newblock In \emph{Proceedings of the 49th Annual ACM SIGACT Symposium on
  Theory of Computing}, pages 1195--1199, 2017.

\bibitem[Allen-Zhu(2018)]{allen2018natasha}
Zeyuan Allen-Zhu.
\newblock Natasha 2: Faster non-convex optimization than sgd.
\newblock In \emph{Advances in neural information processing systems}, pages
  2675--2686, 2018.

\bibitem[Allen-Zhu and Hazan(2016)]{allen2016variance}
Zeyuan Allen-Zhu and Elad Hazan.
\newblock Variance reduction for faster non-convex optimization.
\newblock In \emph{International Conference on Machine Learning}, pages
  699--707, 2016.

\bibitem[Allen-Zhu and Li(2018)]{allen2018neon2}
Zeyuan Allen-Zhu and Yuanzhi Li.
\newblock Neon2: Finding local minima via first-order oracles.
\newblock In \emph{Advances in Neural Information Processing Systems}, pages
  3716--3726, 2018.

\bibitem[Arjevani et~al.(2019)Arjevani, Carmon, Duchi, Foster, Srebro, and
  Woodworth]{arjevani2019lower}
Yossi Arjevani, Yair Carmon, John~C Duchi, Dylan~J Foster, Nathan Srebro, and
  Blake Woodworth.
\newblock Lower bounds for non-convex stochastic optimization.
\newblock \emph{arXiv preprint arXiv:1912.02365}, 2019.

\bibitem[Arjevani et~al.(2020)Arjevani, Carmon, Duchi, Foster, Sekhari, and
  Sridharan]{arjevani2020second}
Yossi Arjevani, Yair Carmon, John~C Duchi, Dylan~J Foster, Ayush Sekhari, and
  Karthik Sridharan.
\newblock Second-order information in non-convex stochastic optimization: Power
  and limitations.
\newblock In \emph{Conference on Learning Theory}, pages 242--299. PMLR, 2020.

\bibitem[Bhojanapalli et~al.(2016)Bhojanapalli, Neyshabur, and
  Srebro]{bhojanapalli2016global}
Srinadh Bhojanapalli, Behnam Neyshabur, and Nathan Srebro.
\newblock Global optimality of local search for low rank matrix recovery.
\newblock \emph{arXiv preprint arXiv:1605.07221}, 2016.

\bibitem[Carmon et~al.(2018)Carmon, Duchi, Hinder, and
  Sidford]{carmon2018accelerated}
Yair Carmon, John~C Duchi, Oliver Hinder, and Aaron Sidford.
\newblock Accelerated methods for nonconvex optimization.
\newblock \emph{SIAM Journal on Optimization}, 28\penalty0 (2):\penalty0
  1751--1772, 2018.

\bibitem[Cutkosky and Orabona(2019)]{cutkosky2019momentum}
Ashok Cutkosky and Francesco Orabona.
\newblock Momentum-based variance reduction in non-convex sgd.
\newblock In \emph{Advances in Neural Information Processing Systems}, pages
  15210--15219, 2019.

\bibitem[Daneshmand et~al.(2018)Daneshmand, Kohler, Lucchi, and
  Hofmann]{daneshmand2018escaping}
Hadi Daneshmand, Jonas Kohler, Aurelien Lucchi, and Thomas Hofmann.
\newblock Escaping saddles with stochastic gradients.
\newblock \emph{arXiv preprint arXiv:1803.05999}, 2018.

\bibitem[Du et~al.(2017)Du, Jin, Lee, Jordan, Poczos, and
  Singh]{du2017gradient}
Simon~S Du, Chi Jin, Jason~D Lee, Michael~I Jordan, Barnabas Poczos, and Aarti
  Singh.
\newblock Gradient descent can take exponential time to escape saddle points.
\newblock \emph{arXiv preprint arXiv:1705.10412}, 2017.

\bibitem[Fang et~al.(2018)Fang, Li, Lin, and Zhang]{fang2018spider}
Cong Fang, Chris~Junchi Li, Zhouchen Lin, and Tong Zhang.
\newblock Spider: Near-optimal non-convex optimization via stochastic
  path-integrated differential estimator.
\newblock In \emph{Advances in Neural Information Processing Systems}, pages
  689--699, 2018.

\bibitem[Ge et~al.(2015)Ge, Huang, Jin, and Yuan]{ge2015escaping}
Rong Ge, Furong Huang, Chi Jin, and Yang Yuan.
\newblock Escaping from saddle points—online stochastic gradient for tensor
  decomposition.
\newblock In \emph{Conference on Learning Theory}, pages 797--842, 2015.

\bibitem[Ge et~al.(2017)Ge, Jin, and Zheng]{ge2017no}
Rong Ge, Chi Jin, and Yi~Zheng.
\newblock No spurious local minima in nonconvex low rank problems: A unified
  geometric analysis.
\newblock In \emph{International Conference on Machine Learning}, pages
  1233--1242. PMLR, 2017.

\bibitem[Hillar and Lim(2013)]{hillar2013most}
Christopher~J Hillar and Lek-Heng Lim.
\newblock Most tensor problems are np-hard.
\newblock \emph{Journal of the ACM (JACM)}, 60\penalty0 (6):\penalty0 1--39,
  2013.

\bibitem[Jin et~al.(2017)Jin, Ge, Netrapalli, Kakade, and
  Jordan]{jin2017escape}
Chi Jin, Rong Ge, Praneeth Netrapalli, Sham~M Kakade, and Michael~I Jordan.
\newblock How to escape saddle points efficiently.
\newblock In \emph{International Conference on Machine Learning}, pages
  1724--1732. PMLR, 2017.

\bibitem[Jin et~al.(2018)Jin, Netrapalli, and Jordan]{jin2018accelerated}
Chi Jin, Praneeth Netrapalli, and Michael~I Jordan.
\newblock Accelerated gradient descent escapes saddle points faster than
  gradient descent.
\newblock In \emph{Conference On Learning Theory}, pages 1042--1085. PMLR,
  2018.

\bibitem[Johnson and Zhang(2013)]{johnson2013accelerating}
Rie Johnson and Tong Zhang.
\newblock Accelerating stochastic gradient descent using predictive variance
  reduction.
\newblock In \emph{Advances in Neural Information Processing Systems}, pages
  315--323, 2013.

\bibitem[Kallenberg and Sztencel(1991)]{kallenberg1991some}
Olav Kallenberg and Rafal Sztencel.
\newblock Some dimension-free features of vector-valued martingales.
\newblock \emph{Probability Theory and Related Fields}, 88\penalty0
  (2):\penalty0 215--247, 1991.

\bibitem[Lei et~al.(2017)Lei, Ju, Chen, and Jordan]{lei2017nonconvex}
Lihua Lei, Cheng Ju, Jianbo Chen, and Michael~I Jordan.
\newblock Non-convex finite-sum optimization via scsg methods.
\newblock In \emph{Advances in Neural Information Processing Systems}, pages
  2348--2358, 2017.

\bibitem[Li(2019)]{li2019ssrgd}
Zhize Li.
\newblock Ssrgd: Simple stochastic recursive gradient descent for escaping
  saddle points.
\newblock In \emph{Advances in Neural Information Processing Systems}, pages
  1521--1531, 2019.

\bibitem[Li et~al.(2020)Li, Bao, Zhang, and Richt{\'a}rik]{li2020page}
Zhize Li, Hongyan Bao, Xiangliang Zhang, and Peter Richt{\'a}rik.
\newblock Page: A simple and optimal probabilistic gradient estimator for
  nonconvex optimization.
\newblock \emph{arXiv preprint arXiv:2008.10898}, 2020.

\bibitem[Nesterov and Polyak(2006)]{nesterov2006cubic}
Yurii Nesterov and Boris~T Polyak.
\newblock Cubic regularization of newton method and its global performance.
\newblock \emph{Mathematical Programming}, 108\penalty0 (1):\penalty0 177--205,
  2006.

\bibitem[Nguyen et~al.(2017{\natexlab{a}})Nguyen, Liu, Scheinberg, and
  Tak{\'a}{\v{c}}]{nguyen2017sarah}
Lam~M Nguyen, Jie Liu, Katya Scheinberg, and Martin Tak{\'a}{\v{c}}.
\newblock Sarah: A novel method for machine learning problems using stochastic
  recursive gradient.
\newblock In \emph{International Conference on Machine Learning}, pages
  2613--2621. PMLR, 2017{\natexlab{a}}.

\bibitem[Nguyen et~al.(2017{\natexlab{b}})Nguyen, Liu, Scheinberg, and
  Tak{\'a}{\v{c}}]{nguyen2017stochastic}
Lam~M Nguyen, Jie Liu, Katya Scheinberg, and Martin Tak{\'a}{\v{c}}.
\newblock Stochastic recursive gradient algorithm for nonconvex optimization.
\newblock \emph{arXiv preprint arXiv:1705.07261}, 2017{\natexlab{b}}.

\bibitem[Pham et~al.(2020)Pham, Nguyen, Phan, and Tran-Dinh]{pham2020proxsarah}
Nhan~H Pham, Lam~M Nguyen, Dzung~T Phan, and Quoc Tran-Dinh.
\newblock Proxsarah: An efficient algorithmic framework for stochastic
  composite nonconvex optimization.
\newblock \emph{Journal of Machine Learning Research}, 21\penalty0
  (110):\penalty0 1--48, 2020.

\bibitem[Pinelis(1994)]{pinelis1994optimum}
Iosif Pinelis.
\newblock Optimum bounds for the distributions of martingales in banach spaces.
\newblock \emph{The Annals of Probability}, pages 1679--1706, 1994.

\bibitem[Reddi et~al.(2016)Reddi, Hefny, Sra, Poczos, and
  Smola]{reddi2016stochastic}
Sashank~J Reddi, Ahmed Hefny, Suvrit Sra, Barnabas Poczos, and Alex Smola.
\newblock Stochastic variance reduction for nonconvex optimization.
\newblock In \emph{International Conference on Machine Learning}, pages
  314--323, 2016.

\bibitem[Tran-Dinh et~al.(2019)Tran-Dinh, Pham, Phan, and
  Nguyen]{tran2019hybrid}
Quoc Tran-Dinh, Nhan~H Pham, Dzung~T Phan, and Lam~M Nguyen.
\newblock Hybrid stochastic gradient descent algorithms for stochastic
  nonconvex optimization.
\newblock \emph{arXiv preprint arXiv:1905.05920}, 2019.

\bibitem[Wang et~al.(2019)Wang, Ji, Zhou, Liang, and
  Tarokh]{wang2019spiderboost}
Zhe Wang, Kaiyi Ji, Yi~Zhou, Yingbin Liang, and Vahid Tarokh.
\newblock Spiderboost and momentum: Faster variance reduction algorithms.
\newblock \emph{Advances in Neural Information Processing Systems},
  32:\penalty0 2406--2416, 2019.

\bibitem[Xiao and Zhang(2014)]{xiao2014proximal}
Lin Xiao and Tong Zhang.
\newblock A proximal stochastic gradient method with progressive variance
  reduction.
\newblock \emph{SIAM Journal on Optimization}, 24\penalty0 (4):\penalty0
  2057--2075, 2014.

\bibitem[Xu et~al.(2017)Xu, Jin, and Yang]{xu2017first}
Yi~Xu, Rong Jin, and Tianbao Yang.
\newblock First-order stochastic algorithms for escaping from saddle points in
  almost linear time.
\newblock \emph{arXiv preprint arXiv:1711.01944}, 2017.

\bibitem[Zhou et~al.(2018)Zhou, Xu, and Gu]{zhou2020stochastic}
Dongruo Zhou, Pan Xu, and Quanquan Gu.
\newblock Stochastic nested variance reduction for nonconvex optimization.
\newblock \emph{Advances in Neural Information Processing Systems},
  31:\penalty0 3921--3932, 2018.

\end{thebibliography}

\appendix

\section{Experiments}\label{sec:exp}
In  this  section,  we  conduct  some   experiments  to  validate  our  theory. We consider the symmetric matrix sensing problem. We need to recover a low-rank matrix $\Mb^{*} = \Ub^{*}(\Ub^{*})^{\top}$, where $\Ub^{*} \in \mathbb{R}^{d\times r}$. We have $n$ sensing matrix $\{\Ab_{i}\}_{i\in [n]}$ with observation $\bb_{i} = \langle \Ab_{i}, \Mb^{*}\rangle$. The optimization problem can be written as 

$$\min_{\Ub \in \mathbb{R}^{d\times r}}f(\Ub) = \frac{1}{2n}\sum_{i=1}^{n}(\langle \Ab_{i}, \Ub\Ub^{\top}\rangle -\bb_i)^{2}.$$
For the data generation, we consider $d=50, r=3$ and $d=100, r=3$. Then we generate the unknown low-rank matrix $\Mb^{*} = \Ub^{*}(\Ub^{*})^{\top}$, where every element in $\Ub^{*} \in \mathbb{R}^{d\times r}$ is independently drawn from the Gaussian distribution $\cN(0, 1/d)$. We then generate $n = 20d$ random sensing matrices  $\{\Ab_{i}\}_{i\in [n]}$ following standard normal distribution, and thus $\bb_{i} = \langle \Ab_{i}, \Mb^{*}\rangle$. The global optimal value of the above optimization problem is $0$, because there is no noise in the model. Next we randomly initialize a vector $\tilde{\ub}_{0}\in \RR^{d}$ from the Gaussian distribution. Then we set $\ub_{0} = \alpha \tilde{\ub}_{0}$ where $\alpha$ is a small constant to guarantee that $\|\ub_{0}\|_{2}< \lambda_{\max}(\Mb^{*}) $ and set the initial input $\Ub_{0} = [\ub_{0}, \zero, \zero]$. We have fixed initialization $\Ub_{0}$ for every optimization algorithm.

We choose our algorithm as $\algspider$ and take SGD, perturbed SGD \citep{ge2015escaping}, SPIDER \citep{fang2018spider}, $\text{SPIDER-SFO}^{+}$(+Neon2) \citep{fang2018spider} and SSRGD \citep{li2019ssrgd} as the baseline algorithms to compare. We evaluate the performance by objective function $\|\Ub\Ub^{\top} - \Mb^{*}\|_{F}^{2}/\|\Mb^{*}\|_{F}^{2}$ and then report the objective function value versus the number of stochastic gradient evaluations in Figure \ref{fig1}. We can see that without adding noise or using second-order information, SGD and SPIDER are not able to escape from saddle points (i.e., the objective function value of the converged point is far above zero). Our algorithm ($\algname$-SPIDER), SSRGD, Perturbed SGD and $\text{SPIDER-SFO}^{+}$(+Neon2) can escape from saddle points. Compared with SSRGD and perturbed SGD, our algorithm converges to the unknown matrix faster. 

\begin{figure}[H]
\vskip -0.1in
     \centering
     \subfigure[Matrix Sensing ($d=50$)]{\includegraphics[width=0.48\textwidth]{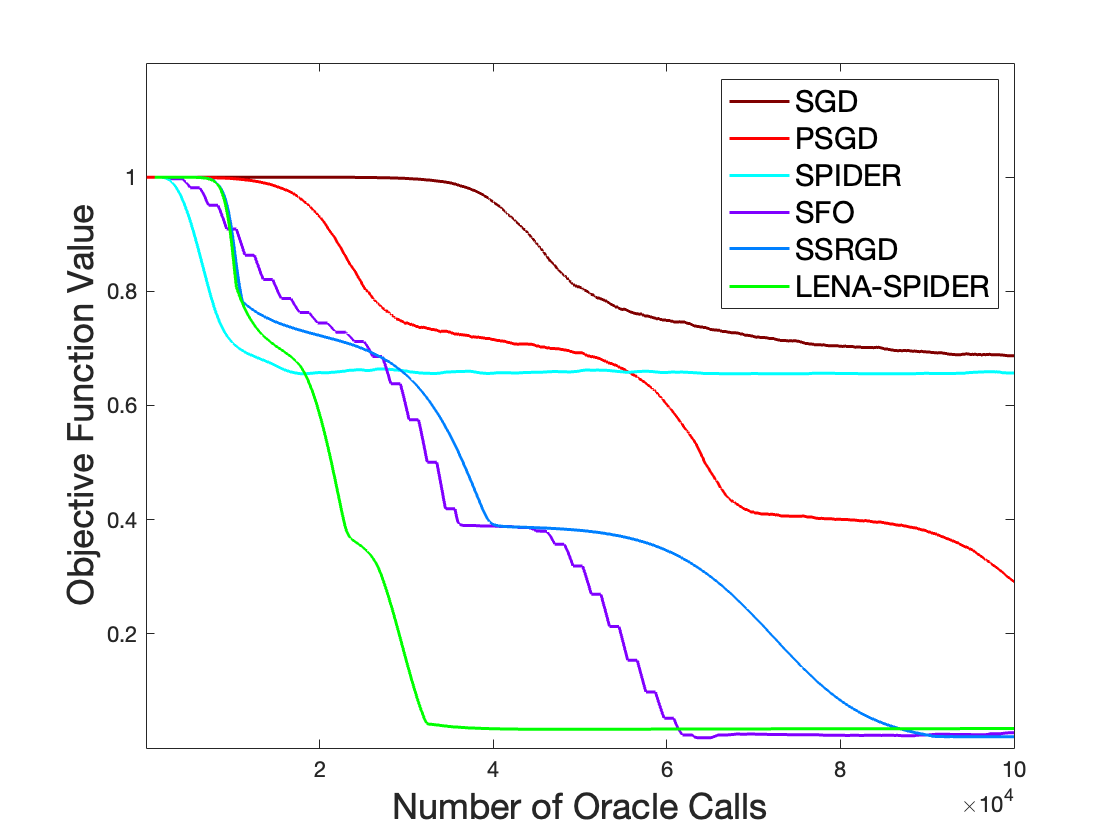}}
      \subfigure[Matrix Sensing ($d=100$)]{\includegraphics[width=0.48\textwidth]{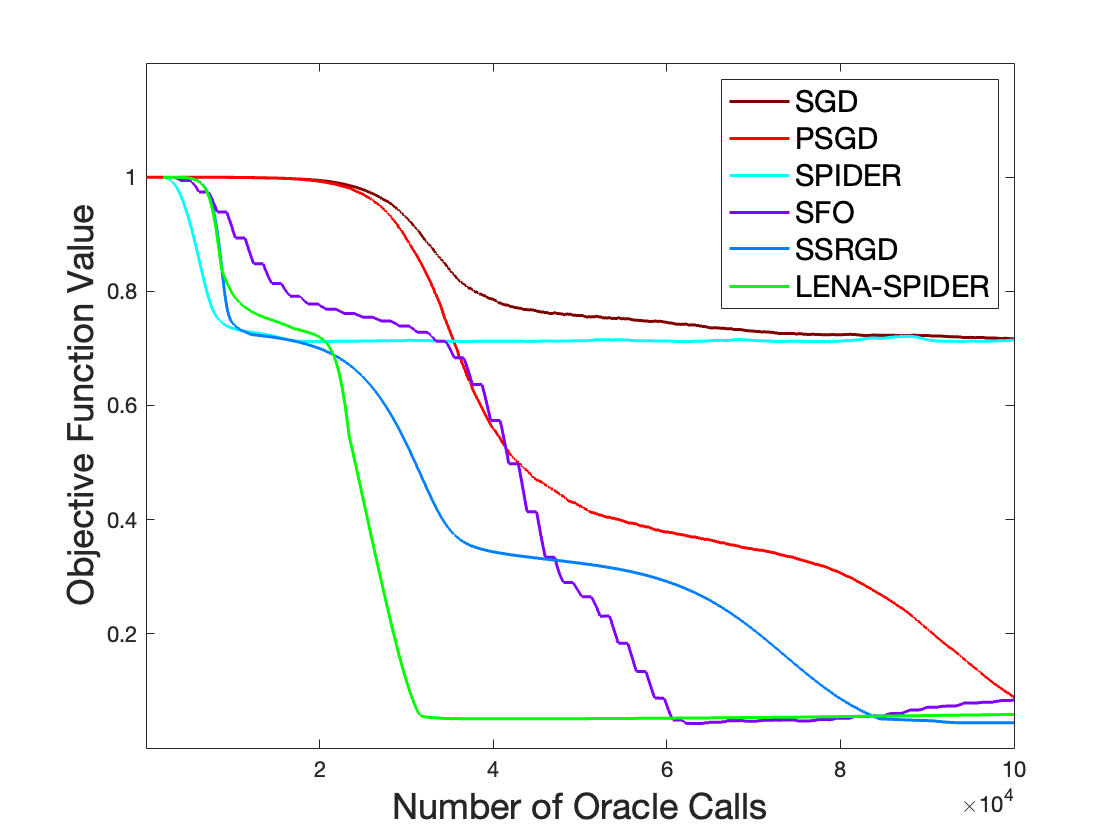}}
      \vskip -0.1in
    \caption{Convergence of different
algorithms for matrix sensing: objective function value versus the number of oracle calls}
    \label{fig1}
    \vskip -0.15in
\end{figure}

Our algorithm empirically outperforms the NEON2-based algorithm $\text{SPIDER-SFO}^{+}$, which can be seen through the experiment results. The reason is that the accuracy of the negative curvature estimation is very crucial to the success of NEON2-based algorithms. However, we found that the accuracy heavily depends on the number of iterations in the NEON2 algorithm, which requires careful parameter tuning to balance the computational cost and the accuracy. In contrast, our algorithm only relies on gradient descent-type updates besides an added noise, which is easier to tune.

\section{Proof of Theorem \ref{thm:storm} }\label{sec:mainproof}
In this section we present the main proof to Theorem \ref{thm:storm}.  We define $\bepsilon_t = \db_t - \nabla F(\xb_t)$ for simplicity.

To prove the main theorem, we need two groups of lemmas to charctrize the behavior of the Algorithm $\algstorm$. 

Next lemma provides the upper bound of $\bepsilon_{t}$.
\begin{lemma}\label{lm:boundeps2}
Set $\etag \leq \sigma/(2bL)$, $r \leq \sigma/(2bL)$ and $\barD \leq \sigma^{2}/(4b^{2}L^{2})$,  $a = 56^{2}\log(4/\delta)/b$, $B=b^{2}$,$a \leq 1/4\ttres$ , with probability at least $1-2\delta$, for all $t$ we have 
\begin{align*}
\|\bepsilon_{t}\|_2 \leq \frac{2^{10}\log(4/\delta)\sigma}{b}.    
\end{align*}
Furthermore, by the choice of $b$ in Theorem \ref{Theorem Spider} we have that $\|\bepsilon_{t}\|_{2} \leq \eg/2$.
\begin{proof}
See Appendix~\ref{proof:lm:boundeps2}.
\end{proof}

\end{lemma}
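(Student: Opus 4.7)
The plan is to mimic the error-bound argument for $\gdspider$ in Lemma~\ref{lm:boundeps}, but adapted to the recursive momentum structure of $\gdstorm$. First I would write down the one-step recursion for $\bepsilon_{t}=\db_t-\nabla F(\xb_t)$. Starting from the $\gdstorm$ update and adding and subtracting $\nabla F(\xb_{t+1})$ and $(1-a)\nabla F(\xb_t)$, one obtains
\begin{align*}
\bepsilon_{t+1}=(1-a)\bepsilon_t+(1-a)\Ub_t+a\Vb_t,
\end{align*}
where $\Ub_t:=\tfrac{1}{b}\sum_i\bigl[\nabla f(\xb_{t+1};\bxi_{t+1}^i)-\nabla f(\xb_t;\bxi_{t+1}^i)-\nabla F(\xb_{t+1})+\nabla F(\xb_t)\bigr]$ and $\Vb_t:=\tfrac{1}{b}\sum_i\nabla f(\xb_{t+1};\bxi_{t+1}^i)-\nabla F(\xb_{t+1})$ are conditionally zero-mean; by Assumption~\ref{asm:lip} each summand of $\Ub_t$ is bounded by $2L\|\xb_{t+1}-\xb_t\|_2$ and by Assumption~\ref{asm:bdvariance} each summand of $\Vb_t$ is bounded by $\sigma$.

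Next I would unroll the recursion back to the initial step, giving
\begin{align*}
\bepsilon_{t+1}=(1-a)^{t}\bepsilon_1+\sum_{s=1}^{t}(1-a)^{t-s}\bigl[(1-a)\Ub_s+a\Vb_s\bigr].
\end{align*}
Since $\bepsilon_1$ is the average of the initial big batch of size $B=b^2$ at $\xb_1$, a vector Azuma--Hoeffding bound (Lemma~\ref{lm:ah-vec}) gives $\|\bepsilon_1\|_2\le O(\sigma\sqrt{\log(1/\delta)}/b)$ with probability $\ge1-\delta$. For the martingale sum, I would apply vector Azuma--Hoeffding a second time, conditional on the trajectory, to get that with high probability
\begin{align*}
\Bigl\|\sum_{s=1}^{t}(1-a)^{t-s}\bigl[(1-a)\Ub_s+a\Vb_s\bigr]\Bigr\|_2^{2}\lesssim \log(1/\delta)\sum_{s=1}^{t}(1-a)^{2(t-s)}\Bigl(\tfrac{4L^2\|\xb_{s+1}-\xb_s\|_2^2}{b}+\tfrac{a^2\sigma^2}{b}\Bigr).
\end{align*}
The $a^2\sigma^2/b$ term contributes at most $\log(1/\delta)a\sigma^2/b$ after summing the geometric series, which with $a=56^2\log(4/\delta)/b$ yields an $O(\log^2(1/\delta)\sigma^2/b^2)$ bound.

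The main technical step, analogous to bounding \eqref{eq:SpiderD} in the $\gdspider$ proof, is to control $\sum_{s=1}^{t}(1-a)^{2(t-s)}\|\xb_{s+1}-\xb_s\|_2^2$ using the three cases from the algorithm: (i) in the $\gdphase$, normalized gradient descent forces $\|\xb_{s+1}-\xb_s\|_2=\etag\le\sigma/(2bL)$; (ii) the single perturbation step satisfies $\|\xb_{s+1}-\xb_s\|_2\le r\le \sigma/(2bL)$; and (iii) inside the $\ncphase$ the ``Last Step Shrinkage'' scheme guarantees the \emph{average} squared movement is at most $\barD=\sigma^2/(4b^2L^2)$. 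Because the condition $a\le 1/(4\ttres)$ keeps $(1-a)^{2(t-s)}\ge e^{-1}$ uniformly across any single $\ncphase$ block, I can upper bound the cumulative weighted sum by $\tfrac{1}{a}\max\{\etag^2,r^2,\barD\}\le \sigma^2/(4ab^2L^2)$, so $L^2/b$ times this is $O(\sigma^2/b^2)$.

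Plugging back gives $\|\bepsilon_{t+1}\|_2\le O(\log(4/\delta)\sigma/b)$ with probability at least $1-2\delta$ after a union bound over the two concentration events (and over all $t$ absorbed into the $\log$ factor). By the choice $b=\tilde O(\sigma\epsilon^{-1}+\sigma\rho\eh^{-2})$ the right-hand side is at most $\epsilon/2$, as required. The main obstacle I expect is the coupling between the iterate movements $\|\xb_{s+1}-\xb_s\|_2$ and the errors $\bepsilon_s$ themselves (since the step sizes depend on $\db_s$); this is handled by an induction on $t$, where at each step the inductive bound $\|\bepsilon_s\|_2\le\epsilon/2$ validates the movement bounds used in case (i) and the ``Last Step Shrinkage'' argument in case (iii), exactly as in the $\gdspider$ proof.
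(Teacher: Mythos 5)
Your proposal follows essentially the same route as the paper's proof in Appendix~\ref{proof:lm:boundeps2}: the recursion $\bepsilon_{t+1}=(1-a)\bepsilon_t+(1-a)\Ub_t+a\Vb_t$ is algebraically the same as the paper's Proposition~\ref{prop:equal} (the paper just rescales by $(1-a)^{-t}$ before forming the martingale sum), the two Azuma--Hoeffding applications (initial batch and running martingale) are both there, and the three-case split (GD step / perturbation step / escape step) of $\sum_s(1-a)^{2(t-s)}\|\xb_{s+1}-\xb_s\|_2^2$ with the role of $a\le 1/(4\ttres)$ to keep the weights within a factor $(1-a)^{-2\ttres}\le 2$ across a single $\ncphase$ block is exactly what the paper formalizes via Proposition~\ref{very basic}. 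The one unnecessary worry in your sketch is the alleged coupling between $\|\xb_{s+1}-\xb_s\|_2$ and $\bepsilon_s$ requiring an induction on $t$: the paper needs no such induction because the three movement bounds are all \emph{deterministic} consequences of the algorithm's design --- normalized GD pins $\|\xb_{s+1}-\xb_s\|_2=\etag$ exactly, the perturbation radius is a fixed $r$, and the ``Last Step Shrinkage'' line of Algorithm~\ref{alg:Pullback} forces the cumulative squared movement $\sum_i\eta_i^2\|\db_i\|_2^2$ to be at most $(t-m_s+1)\barD$ by construction, regardless of how large $\|\bepsilon_s\|_2$ may be.
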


\begin{lemma}\label{lm:gradientdescent2}
Suppose the event in Lemma \ref{lm:boundeps2} holds and $\eta \leq \epsilon/(2L)$, then for any $s$, we have
\begin{align}
    F(\xb_{t_s}) - F(\xb_{m_{s}}) \geq \frac{(m_{s} - t_{s})\eta\eg}{8}.\notag
\end{align}
\end{lemma}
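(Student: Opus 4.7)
The plan is to mirror the proof of Lemma \ref{lm:gradientdescent} almost verbatim, since the only role the gradient estimator plays in the \gdphase\ analysis is to guarantee the bound $\|\bepsilon_t\|_2 \leq \epsilon/2$. For \algstorm, Lemma \ref{lm:boundeps2} provides exactly this guarantee under the stated parameter choices, so the argument goes through without modification.

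First I would invoke the standard per-step descent inequality (Lemma \ref{lm:basic}), which is agnostic to the gradient estimator and reads $F(\xb_{t+1}) \leq F(\xb_t) - \frac{\eta_t}{2}\|\db_t\|_2^2 + \frac{\eta_t}{2}\|\bepsilon_t\|_2^2 + \frac{L}{2}\|\xb_{t+1} - \xb_t\|_2^2$ for any $t_s \leq t < m_s$. Substituting the normalized-gradient update $\xb_{t+1} - \xb_t = -\eta_t \db_t$ with $\eta_t = \eta/\|\db_t\|_2$ yields $F(\xb_{t+1}) \leq F(\xb_t) - \eta^2 \bigl(\tfrac{1}{2\eta_t} - \tfrac{L}{2}\bigr) + \tfrac{\eta_t}{2}\|\bepsilon_t\|_2^2$.

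Next I would apply Lemma \ref{lm:boundeps2} under the event assumed in the hypothesis, giving $\|\bepsilon_t\|_2 \leq \epsilon/2$, together with the \gdphase\ entry condition $\|\db_t\|_2 > \epsilon$, which implies $\eta_t = \eta/\|\db_t\|_2 \leq \eta/\epsilon \leq 1/(2L)$ by the hypothesis $\eta \leq \epsilon/(2L)$. Plugging these into the bound above gives a per-step decrease of at least $\eta\epsilon/8$, exactly as in the SPIDER case. Telescoping this inequality from $t = t_s$ to $t = m_s - 1$ then yields $F(\xb_{t_s}) - F(\xb_{m_s}) \geq (m_s - t_s)\eta\epsilon/8$, which is the desired statement.

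There is no real obstacle here, since the argument is essentially a black-box transfer: the only estimator-dependent fact used is the uniform error bound $\|\bepsilon_t\|_2 \leq \epsilon/2$, and this has already been established for \algstorm\ in Lemma \ref{lm:boundeps2}. The rest of the proof uses only the generic descent lemma and the normalized-gradient update rule, both of which are identical to the SPIDER analysis. Thus the proof is essentially a restatement of the proof of Lemma \ref{lm:gradientdescent} with Lemma \ref{lm:boundeps2} substituted for Lemma \ref{lm:boundeps}.
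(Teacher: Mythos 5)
Your proposal is correct and matches the paper's own argument exactly: the paper's proof of Lemma~\ref{lm:gradientdescent2} is literally a one-line reference to the proof of Lemma~\ref{lm:gradientdescent}, noting only that the bound $\|\bepsilon_t\|_2 \le \epsilon/2$ now comes from Lemma~\ref{lm:boundeps2} instead of Lemma~\ref{lm:boundeps}. You simply unpacked that same transfer step by step, which is fine.
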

\begin{proof}
The proof is the same as that of Lemma \ref{lm:gradientdescent}, with the fact $\|\bepsilon_t\|_2 \leq \epsilon/2$ from Lemma \ref{lm:boundeps2}. 
\end{proof}
The choice of $\eta$ in Theorem \ref{thm:storm} further implies that the loss decrease by $\sigma \epsilon/(16bL)$ on average.



Next lemma shows that if $\xb_{m_{s}}$ is a saddle point, then with high probability, the algorithm will break during the $\ncphase$ and set \textsc{find}$\leftarrow$false. Thus, whenever $\xb_{m_{s}}$ is not a local minimum, the algorithm cannot terminate.

\begin{lemma}\label{lm:hessiandescent2}
Under Assumptions \ref{asm:lip} and \ref{asm:bdvariance}, set  perturbation radius $r\leq L\etah \eh/\rho$, $a\leq \etah \eh$, $b\geq \max\{16\log(4/\delta)\eta_{H}^{-2}L^{-2}\epsilon_{H}^{-2}, 56^{2}\log(4/\delta)a^{-1}\}$,$\ttres = 2\log(8\epsilon_{H}\sqrt{d}\rho^{-1}\delta^{-1}r^{-1})/(\eta_{H} \epsilon_{H})$, $\eta_{H} \leq \min\{1/(10L\log(8\epsilon_{H}L\rho^{-1}r_{0}^{-1})), 1/(10 L\log(\ttres))\}$ and $\barD < L^{2}\etah^{2}\eh^{2}/(\rho\ttres^{2})$. Then for any $s$, when $\lambda_{\min}(\nabla^2 F(\xb_{m_s})) \leq -\eh$,  with  probability at least $1-2\delta$ algorithm breaks in the $\ncphase$.
\end{lemma}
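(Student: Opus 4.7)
The plan is to follow the same coupling-plus-volume argument used in the proof of Lemma~\ref{lm:hessiandescent} for $\algspider$, but with the error control replaced by the STORM-specific bound supplied by Lemma~\ref{lm:boundeps2}. Concretely, I would set up two coupled trajectories $\{\xb_t\}$ and $\{\xb_t'\}$ obtained by running $\algstorm$ starting from $\xb_{m_s+1}$ and $\xb_{m_s+1}'$ respectively, where the two starting points lie in $\mathbb{B}_{\xb_{m_s}}(r)$ and differ only along the smallest-eigenvector direction $\eb_1$ of $\nabla^2 F(\xb_{m_s})$ by $r_0\eb_1$ with $r_0 = \delta r/\sqrt d$. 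The randomness of the stochastic gradient oracle is shared across the two runs so that $\bepsilon_t$ and $\bepsilon_t'$ can be compared.

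Next I would invoke an analog of Lemma~\ref{lm:hessiandescentbaisc} tailored to STORM: under the parameter choices (in particular $r\le L\etah\eh/\rho$, $a\le\etah\eh$, the initial batch $B=b^2$, and $b$ large enough to force $\|\bepsilon_t\|_2$ below the scale demanded by the Hessian's negative eigenvalue), the component of $\xb_t-\xb_t'$ along $\eb_1$ grows at geometric rate roughly $(1+\etah\eh)^{t-m_s}$ for at most $\ttres = \tilde O(1/(\etah\eh))$ steps, while the perpendicular drift and the noise-induced drift remain negligible. The derivation of this growth lemma will reuse Assumption~\ref{asm:lip} for Hessian-Lipschitz control of the linearization error and will track $\|\bepsilon_t-\bepsilon_t'\|_2$ via the STORM recursion; the key observation is that the moving-average structure contributes a multiplicative factor that is absorbed by choosing $a\le\etah\eh$ and $b$ as above. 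This yields, with probability at least $1-\delta$, that $\max\{\|\xb_t-\xb_{m_s+1}\|_2,\|\xb_t'-\xb_{m_s+1}'\|_2\}\ge L\etah\eh/\rho$ for some $m_s< t < m_s+\ttres$.

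The volume (``thin pancake'') step is then identical to the $\algspider$ proof: the set $\cS\subseteq\mathbb{B}_{\xb_{m_s}}(r)$ of perturbations that fail to escape has width at most $r_0$ along $\eb_1$, so $\mathrm{vol}(\cS)/\mathrm{vol}(\mathbb{B}_{\xb_{m_s}}(r))\le \delta$; since $\xb_{m_s+1}-\xb_{m_s}$ is uniform on $\mathbb{B}_{0}(r)$, a union bound gives that with probability at least $1-2\delta$ there exists $m_s<t<m_s+\ttres$ with $\|\xb_t-\xb_{m_s+1}\|_2\ge L\etah\eh/\rho$. Finally, if the algorithm did \emph{not} break during the \ncphase, then by the ``last step shrinkage'' invariant we would have $\sum_{i=m_s}^{t-1}\|\xb_{i+1}-\xb_i\|_2^2\le(t-m_s)\barD$, so by Cauchy--Schwarz $\|\xb_t-\xb_{m_s+1}\|_2\le\ttres\sqrt{\barD}$; plugging in $\barD<L^2\etah^2\eh^2/(\rho^2\ttres^2)$ (note the squared $\rho$ to be consistent with the length scale) contradicts the escape bound, so the algorithm must break in the \ncphase.

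The main obstacle is the coupling analysis inside the STORM analog of Lemma~\ref{lm:hessiandescentbaisc}. Unlike SPIDER, where the estimator resets every $q$ steps and the error reduces to a clean martingale on a short window, the STORM estimator is a running exponential average, so $\bepsilon_t-\bepsilon_t'$ couples across the \emph{entire} history of the escape phase. To handle this, I would set up a coupled STORM recursion for the difference, bound its increments by $L\|\xb_t-\xb_t'\|_2$ using Assumption~\ref{asm:lip}, and then apply a concentration inequality (a vector-valued Freedman or Azuma bound, in the spirit of Lemma~\ref{lm:ah-vec}) to control the accumulated noise so that the signal growing like $(1+\etah\eh)^{t-m_s}$ dominates; this is where the specific choices $a=56^2\log(4/\delta)/b$ and $b\ge 16\log(4/\delta)\etah^{-2}\eh^{-2}L^{-2}$ are used.
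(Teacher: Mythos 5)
Your proposal mirrors the paper's own proof: it reduces the statement to a STORM-specific small-stuck-region lemma (the paper's Lemma~\ref{lemma:small stuck2}, which plays the role of Lemma~\ref{lm:hessiandescentbaisc} for STORM and is proved exactly as you sketch, via a coupled STORM recursion for $\bepsilon_t-\bepsilon_t'$, Hessian-Lipschitz control of the linearization error, and a vector-valued Azuma bound), then applies the thin-pancake volume argument, a union bound, and the Cauchy--Schwarz/$\barD$ contradiction. Your side remark about the exponent of $\rho$ in the $\barD$ condition is also correct: the proof actually needs $\barD < L^2\etah^2\eh^2/(\rho^2\ttres^2)$, so the $\rho$ in the lemma statement is a typo.
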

\begin{proof}
See Appendix \ref{proof:lm:hessiandescent2}. 
\end{proof}



Next lemma shows that $\algstorm$ decreases when it breaks.
\begin{lemma}[localization]\label{lm:localization2}
Suppose the event in Lemma \ref{lm:boundeps2} holds, set perturbation radius $r \leq \min\big\{\log(4/\delta)^{2}\etah\sigma^{2}/(4b^{2}\epsilon), \sqrt{2\log(4/\delta)^{2}\etah\sigma^2/(b^{2}L)}\big\}$, $\etah \leq 1/ \big(2^{12}L\log(4/\delta)\big)$, and $\barD = \sigma^{2}/(4b^{2}L^{2})$.
Then for any $s$, when $\algstorm$ breaks, then $\xb_{m_s}$ satisfies 
\begin{align}
    F(\xb_{m_{s}}) - F(\xb_{t_{s+1}}) \geq (t_{s+1}-m_s)\frac{\log(4/\delta)^{2}\etah\sigma^2}{b^{2}}.
\end{align}

\end{lemma}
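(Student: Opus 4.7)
The plan is to mirror the argument used for the SPIDER case (Lemma \ref{lm:localization}), with the only essential change being the worse per-iterate gradient estimation error that STORM provides. Specifically, by Lemma \ref{lm:boundeps2}, the error bound $\|\bepsilon_t\|_2 \le 2^{10}\log(4/\delta)\sigma/b$ replaces the SPIDER bound $\|\bepsilon_t\|_2 \le \sqrt{8\log(4/\delta)}\sigma/\sqrt{B}$, and squaring these gives the $\log(4/\delta)^2$ factor that appears in the target conclusion (while the role of $B$ in the SPIDER bound is played by $b^2$ in the STORM setting, since in Theorem \ref{thm:storm} we set $B = b^2$).

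First I would invoke the standard descent inequality (Lemma \ref{lm:basic}), namely
\begin{align*}
F(\xb_{i+1}) \le F(\xb_i) - \tfrac{\eta_i}{2}\|\db_i\|_2^2 + \tfrac{\eta_i}{2}\|\bepsilon_i\|_2^2 + \tfrac{L}{2}\|\xb_{i+1}-\xb_i\|_2^2,
\end{align*}
valid for every $i$ with $m_s < i < t_{s+1}$, and plug in the update $\xb_{i+1} = \xb_i - \eta_i \db_i$ together with $\eta_i \le \etah \le 1/(2L)$ to obtain
\begin{align*}
F(\xb_{i+1}) \le F(\xb_i) + \tfrac{\etah}{2}\cdot\tfrac{2^{20}\log(4/\delta)^2\sigma^2}{b^2} - \tfrac{1}{4\etah}\|\xb_{i+1}-\xb_i\|_2^2.
\end{align*}
Telescoping from $i = m_s+1$ to $t_{s+1}-1$ and using that the last-step shrinkage triggers exactly when $\sum_{i=m_s}^{t_{s+1}-1}\eta_i^2\|\db_i\|_2^2 = (t_{s+1}-m_s+1)\barD$, I would lower-bound the accumulated movement by a quantity proportional to $\barD$ per step. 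The choice $\barD = \sigma^2/(4b^2L^2)$ together with $\etah \le 1/(2^{12}L\log(4/\delta))$ is designed precisely so that the quadratic gain $\barD/(4\etah)$ exceeds the error cost $2^{19}\etah\log(4/\delta)^2\sigma^2/b^2$ by a constant multiple of $\log(4/\delta)^2\etah\sigma^2/b^2$, yielding the desired per-step decrease over indices $m_s+1,\ldots,t_{s+1}-1$.

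Next I would handle the perturbation step $i = m_s$ separately: the stochastic gradient descent inequality in this one-step form (again from Lemma \ref{lm:basic}) gives $F(\xb_{m_s+1}) \le F(\xb_{m_s}) + (\|\db_{m_s}\|_2 + \|\bepsilon_{m_s}\|_2 + Lr/2)\,r$. Using $\|\db_{m_s}\|_2\le\epsilon$ and $\|\bepsilon_{m_s}\|_2 \le \epsilon/2$, and the hypothesis $r \le \min\{\log(4/\delta)^2\etah\sigma^2/(4b^2\epsilon),\sqrt{2\log(4/\delta)^2\etah\sigma^2/(b^2L)}\}$, this bounds the single-step increase at the perturbation by at most a constant multiple of $\log(4/\delta)^2\etah\sigma^2/b^2$. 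Combining this with the telescoped bound and using $t_{s+1}-m_s \ge 2$ absorbs the perturbation-step loss, leaving a net decrease of at least $(t_{s+1}-m_s)\log(4/\delta)^2\etah\sigma^2/b^2$.

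The main obstacle is bookkeeping the constants so that the quadratic term $\barD/(4\etah)$ genuinely dominates both the STORM error inflation $\etah\log(4/\delta)^2\sigma^2/b^2$ appearing along iterations $m_s+1,\ldots,t_{s+1}-1$ and the perturbation-step surplus; no new probabilistic argument is required beyond what Lemma \ref{lm:boundeps2} already supplies. The rest is essentially the same calculation as in the proof of Lemma \ref{lm:localization}, with $1/\sqrt{B}$ replaced everywhere by $\log(4/\delta)/b$.
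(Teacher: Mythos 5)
Your proposal is correct and follows essentially the same route as the paper's proof: telescope the descent inequality from Lemma~\ref{lm:basic} over $i = m_s+1, \ldots, t_{s+1}-1$ using the STORM error bound of Lemma~\ref{lm:boundeps2}, replace the accumulated squared movement by $\barD$ per step via the last-step shrinkage termination condition, bound the single perturbation step with $\|\db_{m_s}\|_2 \le \epsilon$, $\|\bepsilon_{m_s}\|_2 \le \epsilon/2$ and the hypothesis on $r$, and finally absorb that surplus using $t_{s+1}-m_s \ge 2$. The constant bookkeeping you defer to is exactly what the paper carries out, with $\barD/(4\etah) = \sigma^2/(16\etah b^2 L^2) \ge 2\cdot 2^{19}\log(4/\delta)^2\etah\sigma^2/b^2$ under $\etah \le 1/(2^{12}L\log(4/\delta))$.
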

\begin{proof}
See Appendix \ref{proof:lm:localization2}. 
\end{proof}

With all above lemmas, we prove Theorem \ref{thm:storm}.
\begin{proof}[Proof of Theorem \ref{thm:storm}]
Under the choice of parameter in Theorem \ref{thm:storm}, we have Lemma \ref{lm:boundeps2} to \ref{lm:localization2} hold. Now for \gdphase, we know that the function value $F$ decreases by $\sigma \epsilon/(16bL)$ on average. For \ncphase, we know that the $F$ decreases by $\log(4/\delta)^{2}\etah\sigma^2/b^{2}$ on average. 
So $\algstorm$ can find $(\epsilon, \eh)$-approximate local minima within $\tilde O(bL\Delta\sigma^{-1} \epsilon^{-1} + b^{2}L\Delta\sigma^{-2})$ iterations (we use the fact that $\etah = \tilde O(L^{-1})$). Then the total number of stochastic gradient evaluations is bounded by $\tilde O(B+ b^{2}L\Delta\sigma^{-1} \epsilon^{-1} + b^{3}L\Delta\sigma^{-2})$.
Plugging in the choice of $b = \tilde{O}(\sigma\epsilon^{-1} + \sigma\rho\eh^{-2})$ in Theorem~\ref{thm:storm}, we have the total sample complexity
\begin{align*}
\tilde O\bigg(\frac{\sigma L\Delta}{\epsilon^{3}} + \frac{\sigma \rho^{2}L\Delta}{\epsilon\epsilon_{H}^{4}} + \frac{\sigma\rho^{3}L\Delta}{\epsilon_{H}^{6}}\bigg).  
\end{align*}
The proof finishes by using Young's inequality.
\end{proof}

\section{Proof of Lemmas in Section \ref{sec:mainproof}}\label{sec:induction}
In this section we prove lemmas in Section \ref{sec:mainproof}. Let filtration $\cF_{t, b}$ denote the all history before sample $\bxi_{t, b}$ at time $t \in \{0, \cdots, T\}$, then it is obvious that $\cF_{0, 1} \subseteq \cF_{0, b} \subseteq \cdots \subseteq \cF_{1, 1} \subseteq \cdots \subseteq \cF_{T,1} \subseteq \cdots \subseteq \cF_{T, b}$.

We also need the following fact: 
\begin{proposition}\label{prop:equal}
For any $t$, we have the following equation:
\begin{align}
       \frac{\bepsilon_{t+1}}{(1 - a)^{t+1}} - \frac{\bepsilon_{t}}{(1 - a)^{t}} = \frac{1}{(1-a)^{t+1}}\sum_{i\leq b} \bepsilon_{t,i}, \notag
\end{align}
where
\begin{align}
    \bepsilon_{t,i} &= \frac{a}{b}[\nabla f(\xb_{t+1}; \bxi_{t+1}^{i}) - \nabla F(\xb_{t+1})]\notag \\
    &\qquad + \frac{1 - a}{b}\big[\nabla F(\xb_t) - \nabla f(\xb_t; \bxi_{t+1}^{i}) - \nabla F(\xb_{t+1}) + \nabla f(\xb_{t+1}; \bxi_{t+1}^{i})].\notag
\end{align}
\end{proposition}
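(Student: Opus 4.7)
The plan is to start from the STORM update $\db_{t+1} = (1-a)\bigl[\db_t - \tfrac{1}{b}\sum_{i=1}^b \nabla f(\xb_t;\bxi_{t+1}^i)\bigr] + \tfrac{1}{b}\sum_{i=1}^b \nabla f(\xb_{t+1};\bxi_{t+1}^i)$ from Algorithm~\ref{alg:storm}, and to translate this recursion for $\db_t$ into a recursion for the error $\bepsilon_t = \db_t - \nabla F(\xb_t)$. This is purely algebraic bookkeeping; the ``trick'' is choosing the right grouping so that the summand matches the $\bepsilon_{t,i}$ stated in the claim.

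First I would subtract $\nabla F(\xb_{t+1})$ from both sides of the update rule and substitute $\db_t = \bepsilon_t + \nabla F(\xb_t)$ on the right. After collecting the $(1-a)\bepsilon_t$ term, what remains is
$$
\bepsilon_{t+1} - (1-a)\bepsilon_t = \frac{1}{b}\sum_{i=1}^b\Bigl[(1-a)\bigl(\nabla F(\xb_t) - \nabla f(\xb_t;\bxi_{t+1}^i)\bigr) + \nabla f(\xb_{t+1};\bxi_{t+1}^i) - \nabla F(\xb_{t+1})\Bigr].
$$

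Next I would apply the trivial decomposition $1 = a + (1-a)$ to split the final pair $\nabla f(\xb_{t+1};\bxi_{t+1}^i) - \nabla F(\xb_{t+1})$ into an ``$a$-part'' and a ``$(1-a)$-part''. Combining the $(1-a)$-part with the two existing $(1-a)$ terms regroups the $i$-th summand into exactly $b\cdot \bepsilon_{t,i}$ as defined in the proposition, which yields the compact recursion $\bepsilon_{t+1} = (1-a)\bepsilon_t + \sum_{i\leq b}\bepsilon_{t,i}$.

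Finally, dividing both sides by $(1-a)^{t+1}$ converts the recursion into the claimed telescoping form. There is no real obstacle beyond the algebra: the key observation is that separating the variance term at $\xb_{t+1}$ into an $a$-weighted and $(1-a)$-weighted piece is precisely what is needed to match the author's definition of $\bepsilon_{t,i}$, which in turn is what makes the sequence $\bepsilon_t/(1-a)^t$ a martingale-like telescoping sum suitable for later concentration arguments.
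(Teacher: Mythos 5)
Your proposal is correct and matches the paper's proof in substance: both start from the STORM update, subtract $\nabla F(\xb_{t+1})$, substitute $\db_t = \bepsilon_t + \nabla F(\xb_t)$, perform the $1 = a + (1-a)$ split of the $\nabla f(\xb_{t+1};\bxi_{t+1}^i) - \nabla F(\xb_{t+1})$ term, and divide by $(1-a)^{t+1}$. The only cosmetic difference is that you isolate $\bepsilon_{t+1} - (1-a)\bepsilon_t$ as an intermediate step before regrouping, whereas the paper performs the regroup and substitution in one pass; the algebra is identical.
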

\begin{proof}
Following the update rule in $\algstorm$, we could have the update rule of $\bepsilon$ described as
\begin{align*}
    \bepsilon_{t+1} &= \frac{1 - a}{b}\sum_{i \leq b}\big[\db_t - \nabla f(\xb_t; \bxi_{t + 1}^i)\big] + \frac{1}{b}\sum_{i \leq b}\big[\nabla f(\xb_{t+1}; \bxi_{t+1}^{i}) - \nabla F(\xb_{t+1})\big] \\
    &= \frac{a}{b}\sum_{i \leq b}[\nabla f(\xb_{t+1}; \bxi_{t+1}^{i}) - \nabla F(\xb_{t+1})] + (1 - a)(\db_t - \nabla F(\xb_t))\\
    &\quad + \frac{1 - a}{b}\sum_{i \leq b}\big[\nabla F(\xb_t) - \nabla f(\xb_t; \bxi_{t+1}^{i}) - \nabla F(\xb_{t+1}) + \nabla f(\xb_{t+1}; \bxi_{t+1}^{i})\big]\\
    &= \frac{a}{b}\sum_{i \leq b}[\nabla f(\xb_{t+1}; \bxi_{t+1}^{i}) - \nabla F(\xb_{t+1})] + (1 - a)\bepsilon_{t}\\
    &\quad + \frac{1 - a}{b}\sum_{i \leq b}\big[\nabla F(\xb_t) - \nabla f(\xb_t; \bxi_{t+1}^{i}) - \nabla F(\xb_{t+1}) + \nabla f(\xb_{t+1}; \bxi_{t+1}^{i})\big],
\end{align*}
where the last equation is by definition $\bepsilon_t:=\db_t - \nabla F(\xb_t)$. Thus we have 
\begin{align}
    &\frac{\bepsilon_{t+1}}{(1 - a)^{t+1}} - \frac{\bepsilon_{t}}{(1 - a)^{t}} \notag \\
    &= \frac{1}{(1-a)^{t+1}}\Big(\frac{a}{b}\sum_{i \leq b}[\nabla f(\xb_{t+1}; \bxi_{t+1}^{i}) - \nabla F(\xb_{t+1})] \notag \\
    &\quad+ \frac{1 - a}{b}\sum_{i \leq b}\big[\nabla F(\xb_t) - \nabla f(\xb_t; \bxi_{t+1}^{i}) - \nabla F(\xb_{t+1}) + \nabla f(\xb_{t+1}; \bxi_{t+1}^{i})\big]\Big),\notag \\
    & = \frac{1}{(1-a)^{t+1}}\sum_{i\leq b} \bepsilon_{t,i}.\notag
\end{align}
\end{proof}

\subsection{Proof of Lemma~\ref{lm:boundeps2}}\label{proof:lm:boundeps2}
\begin{proposition}\label{very basic}
For two positive sequences $\{a_{i}\}_{i=1}^{n}$ and $\{b_{i}\}_{i=1}^{n}$. Suppose $C = \max_{i,j\in[n]}\{|a_{i}/a_{j}|\}$, $\bar{b} = \sum_{i=1}^{n}b_{i}/n$. Then we have,
\begin{align*}
\sum_{i=1}^{n}a_{i}b_{i} \leq \max_i a_i\cdot n \cdot \bar b\leq C\sum_{i=1}^{n}a_{i}\bar{b}.    
\end{align*}
\end{proposition}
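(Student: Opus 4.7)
The proposition consists of two inequalities chained together, and both are elementary consequences of positivity combined with the definition of $C$. I will handle them separately in the order they appear.

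For the first inequality $\sum_{i=1}^{n}a_i b_i \leq (\max_i a_i)\cdot n\cdot \bar b$, my plan is to apply the pointwise bound $a_i \leq \max_j a_j$ (valid for every $i$), then multiply by the nonnegative $b_i$ and sum over $i$. This yields $\sum_i a_i b_i \leq (\max_j a_j)\sum_i b_i$, and by the very definition $\bar b = \tfrac{1}{n}\sum_i b_i$ the right-hand side equals $(\max_j a_j)\cdot n\cdot \bar b$, as required.

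For the second inequality $(\max_i a_i)\cdot n\cdot \bar b \leq C\sum_{i=1}^{n}a_i \bar b$, I will first cancel the common nonnegative factor $\bar b$, reducing the claim to $n\max_i a_i \leq C\sum_{i=1}^{n}a_i$. By the definition $C = \max_{i,j}|a_i/a_j|$, for every index $j \in [n]$ we have $\max_i a_i \leq C\cdot a_j$. Summing this bound over $j\in[n]$ gives $n\max_i a_i \leq C\sum_{j=1}^n a_j$, which is exactly what is needed. Reinstating the factor $\bar b$ on both sides finishes the argument.

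There is essentially no conceptual obstacle here; the proposition is a short utility lemma whose role, based on the surrounding context, is to convert geometrically weighted sums of the form appearing in Proposition \ref{prop:equal} (with weights $(1-a)^{-t}$) into sums weighted by the uniform average. The only minor care needed is to note that the positivity hypothesis on $\{a_i\}$ and $\{b_i\}$ guarantees $\bar b \geq 0$ and that the ratios $a_i/a_j$ in the definition of $C$ are well-defined, so that all manipulations above are legitimate.
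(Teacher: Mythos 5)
The paper states Proposition~\ref{very basic} without proof, so there is nothing to compare against; your argument is a correct and essentially canonical verification. Both inequalities go through exactly as you describe: the first by bounding each $a_i$ by $\max_j a_j$ before summing against the positive $b_i$, and the second by observing that for every $j$ the ratio $(\max_i a_i)/a_j \leq C$, so $\max_i a_i \leq C a_j$, and summing over $j$; positivity of the $b_i$ makes $\bar b > 0$, so reinstating the factor $\bar b$ is harmless.
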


\begin{proof}[Proof of Lemma~\ref{lm:boundeps2}]
By Proposition \ref{prop:equal} we have
\begin{align}
    &\frac{\bepsilon_{t+1}}{(1 - a)^{t+1}} - \frac{\bepsilon_{t}}{(1 - a)^{t}} 
    = \frac{1}{(1-a)^{t+1}}\sum_{i\leq b} \bepsilon_{t,i}.\notag
\end{align}
It is easy to verify that $\{\bepsilon_{t,i}\}$ forms a martingale difference sequence and
\begin{align}
    \|\bepsilon_{t,i}\|_2^2 &\leq 2\bigg\|\frac{a}{b}[\nabla f(\xb_{t+1}; \bxi_{t+1}^{i}) - \nabla F(\xb_{t+1})] \bigg\|_2^2\notag\\
    &\quad+ 2\bigg\|\frac{1 - a}{b}\big[\nabla F(\xb_t) - \nabla f(\xb_t; \bxi_{t+1}^{i}) - \nabla F(\xb_{t+1}) + \nabla f(\xb_{t+1}; \bxi_{t+1}^{i})\big]\bigg\|_2^2\notag\\
    & \leq \frac{2a^2\sigma^2 + 8(1-a)^2L^2\|\xb_{t+1} - \xb_i\|_2^2}{b^2},\notag
\end{align}
where the first inequality holds due to triangle inequality, the second inequality holds due to Assumptions \ref{asm:lip} and \ref{asm:bdvariance}. Therefore, by Azuma-Hoeffding inequality (See Lemma \ref{lm:ah-vec} for detail), with probability at least $1-\delta$, we have that for any $t>0$, 
\begin{align}
    \bigg\|\frac{\bepsilon_{t}}{(1 - a)^{t}} - \frac{\bepsilon_{0}}{(1 - a)^{0}}\bigg\|_2^2 &\leq 4\log(4/\delta)\sum_{i=0}^{t-1}b\cdot \frac{2a^2\sigma^2 + 8(1-a)^2L^2\|\xb_{i+1} - \xb_i\|_2^2}{(1-a)^{2i+2}b^{2}}\notag \\
    & = 8\log(4/\delta)\sum_{i=0}^{t-1} \frac{a^2\sigma^2 + 4(1-a)^2L^2\|\xb_{i+1} - \xb_i\|_2^2}{(1-a)^{2i+2}b}\notag.
\end{align}
Therefore, we have 
\begin{align}
\|\bepsilon_{t}\|_2^2 &\leq  2(1-a)^{2t}\bigg\|\frac{\bepsilon_{t}}{(1 - a)^{t}} - \bepsilon_{0}\bigg\|_2^2 +  2(1-a)^{2t}\|\bepsilon_{0}\|_2^2\notag\\
&\le \log(4 / \delta)\bigg[\frac{64L^2}{b}\sum_{i=0}^{t-1}(1 - a)^{2t - 2i}  \|\xb_{i+1} - \xb_i\|^2_2 + \frac{16a\sigma^2}{b}\bigg] + 2(1-a)^{2t}\|\bepsilon_{0}\|_2^2\label{eq:epsilon1}.
\end{align}
By Azuma-Hoeffding Inequality, we have with probability $1-\delta$,
\begin{align*}
    \|\bepsilon_0\|_2^2 = \bigg\|\frac{1}{B}\sum_{1\leq i \leq B}\Big[\nabla f(\xb_{0}; \bxi_{0}^{i}) - \nabla F(\xb_{0})\Big]\bigg\|_2^2 \leq \frac{4\log(4/\delta)\sigma^2}{B}. 
\end{align*}
Therefore, with probability $1- 2\delta$, we have
\begin{align}
    \|\bepsilon_t\|_2^2 &\le \log(4 / \delta)\bigg[\frac{64L^2}{b}\sum_{i=0}^{t-1}(1 - a)^{2t - 2i-2}  \|\xb_{i+1} - \xb_i\|^2_2+ \frac{16a\sigma^2}{b} + \frac{32(1 - a)^{2t}\sigma^2}{B}\bigg]\notag \\
    &= \frac{64L^2\log(4/\delta)}{b}\underbrace{\sum_{i=0}^{t-1}(1 - a)^{2t - 2i-2}  \|\xb_{i+1} - \xb_i\|^2_2}_{I} + \frac{16a\sigma^2\log(4/\delta)}{b}\\
    &\qquad  + \frac{32(1-a)^{2t}\log(4/\delta)\sigma^2}{B}\label{eq:110}.
\end{align}
We now bound $I$. Denote $S_{1} = \{i \in [t-1]| \exists j, t_{j}\leq i < m_{j}\}$, $S_{2} = \{i \in [t-1]| \exists j, i = m_{j}\}$, $S_{3} = \{i \in [t-1]| \exists j, m_{j} < i < t_{j+1}\}$, We can divide $I$ into three part, 
\begin{align}
    I &= \underbrace{\sum_{i \in S_{1}}(1 - a)^{2t - 2i-2}  \|\xb_{i+1} - \xb_i\|^2_2}_{I_1} + \underbrace{\sum_{i\in S_{2}}(1 - a)^{2t - 2i-2}  \|\xb_{i+1} - \xb_i\|^2_2}_{I_2}\notag \\
    &\qquad + \underbrace{\sum_{i \in S_{3}}^{t-1} (1 - a)^{2t - 2i-2}  \|\xb_{i+1} - \xb_i\|^2_2}_{I_3}.\label{eq:111}
\end{align}
Because $\|\xb_{i+1} - \xb_{i}\|_{2} = \eta_{t}\|\db_{i}\|_{2} = \eta$, we can bound $I_{1}$ as follows,
\begin{align}
    I_{1} = \eta^2\sum_{i \in S_{1}}(1 - a)^{2t - 2i-2} \leq \eta^2\sum_{i=0}^{\infty}(1 - a)^{i} = \frac{\eta^{2}}{a}.\label{eq:112}
\end{align}
Because the perturbation radius is $r$, we can bound $I_{2}$ as follows, 
\begin{align}
    I_{2} = \sum_{i\in S_{2}}(1 - a)^{2t - 2i-2}  \|\xb_{i+1} - \xb_i\|^2_2 \leq r^{2}\sum_{i\in S_{2}}(1 - a)^{2t - 2i-2}\leq \frac{r^{2}}{a}.\label{eq:113}
\end{align}
To bound $I_3$, we have
\begin{align}
    I_3 &= \sum_{i \in S_{3}}^{t-1} (1 - a)^{2t - 2i-2}  \|\xb_{i+1} - \xb_i\|^2_2 \notag \\
    & =  \sum_{s=1}^S\sum_{i = m_{s}+1}^{\min\{t-1, t_{s+1}-1\}} (1 - a)^{2t - 2i-2}  \|\xb_{i+1} - \xb_i\|^2_2 \notag \\
    &\leq \sum_{s=1}^S(1-a)^{-2\ttres}\sum_{i = m_{s}+1}^{\min\{t-1, t_{s+1}-1\}}  (1 - a)^{2t - 2i-2} \barD \notag \\
    & = (1-a)^{-2\ttres}\sum_{i \in S_{3}}^{t-1}(1 - a)^{2t - 2i-2} \barD\notag \\
    &\leq \frac{\barD(1-a)^{-2\ttres}}{a},
\label{eq:117}
\end{align}
where $S$ satisfies $m_S < t-1 < t_{S+1}$. The first inequality holds due to Proposition \ref{very basic} with the fact that the average of $\|\xb_{i+1} - \xb_i\|^2_2$ is bounded by $\bar D$, according to the $\algname$ scheme, and $t_{s+1} - m_s<\ttres$, the last one holds trivially. Substituting \eqref{eq:112}, \eqref{eq:113}, \eqref{eq:117} into \eqref{eq:111}, we have
\begin{align}\nonumber
    I \leq\frac{\eta^{2}+r^{2}+(1-a)^{2\ttres}\barD}{a}.
\end{align}
Therefore \eqref{eq:110} can further bounded by 
\begin{align}
 \|\bepsilon_t\|_2^2 &\leq \frac{64L^2\log(4/\delta)}{b}\frac{\eta^{2}+r^{2}+(1-a)^{2\ttres}\barD}{a} + \frac{16a\sigma^2\log(4/\delta)}{b} + \frac{32(1-a)^{2t}\log(4/\delta)\sigma^2}{B}\label{eq:118}.    
\end{align}

By the selection of  $\etag \leq \sigma/(2bL)$, $r \leq \sigma/(2bL)$ and $\barD \leq \sigma^{2}/(4b^{2}L^{2})$,  $a = 56^{2}\log(4/\delta)/b$, $B=b^{2}$,$a \leq 1/4\ttres$, it's easy to verify that
\begin{align}
 &\frac{64L^2\log(4/\delta)}{b}\frac{\eta^{2}+r^{2}+ 2\barD}{a} \leq \frac{\sigma^2}{b^{2}}\label{eq:119}\\
  &(1-a)^{2\ttres} \geq 1 - 2a\ttres \geq \frac{1}{2}\label{eq:120}\\
   &\frac{16a\sigma^2\log(4/\delta)}{b} \leq \frac{224^{2}\sigma^2\log(4/\delta)^{2}}{b^{2}}\label{eq:121}\\
   &\frac{32\log(4/\delta)\sigma^2}{B} \leq  \frac{32\log(4/\delta)\sigma^2}{b^{2}}\label{eq:122}.
\end{align}
Plugging \eqref{eq:119} to \eqref{eq:122} into \eqref{eq:118} gives, 
\begin{align*}
\|\bepsilon_{t}\|_2 \leq \frac{2^{10}\log(4/\delta)\sigma}{b}.    
\end{align*}

\end{proof}


\subsection{Proof of Lemma \ref{lm:hessiandescent2}}\label{proof:lm:hessiandescent2}

\begin{lemma}[Small stuck region]\label{lemma:small stuck2}
Suppose $-\gamma = \lambda_{\min}(\nabla^2 F(\xb_{m_s})) \leq -\epsilon_{H}$. 
Set threshold $ \ell  =  2\log(8\epsilon_{H}\rho^{-1}r_{0}^{-1})/(\eta_{H} \gamma)$, $\eta_{H} \leq \min\{1/(10L\log(8\epsilon_{H}L\rho^{-1}r_{0}^{-1})), 1/(10 L\log(\ell))\}$, $a \leq \etah\gamma $, $r \leq L\etah\eh/\rho$.
Let $\{\xb_{t}\}, \{\xb_{t}'\}$ be two coupled sequences by running $\algstorm$ from $\xb_{m_{s}+1}, \xb_{m_{s}+1}'$ with $\wb_{m_{s}+1} = \xb_{m_{s}+1}-\xb_{m_{s}+1}' = r_{0}\eb_{1}$, where $\xb_{m_{s}+1}, \xb_{m_{s}+1}'\in \mathbb{B}_{\xb_{m_{s}}}(r)$, $r_{0} = \delta r/\sqrt{d}$ and $\eb_{1}$ denotes the smallest eigenvector direction of Hessian $\nabla^{2}F(\xb_{m_{s}})$. Moreover, let batch size $b\geq \max\{16\log(4/\delta)\eta_{H}^{-2}L^{-2}\gamma^{-2}, 56^{2}\log(4/\delta)a^{-1}\}$, then with probability $1-2\delta$ we have 
\begin{align*}
\exists T\leq \ell, \max\{\|\xb_{T} - \xb_{0}\|_{2},\|\xb_{T}' - \xb_{0}'\|_{2}\} \geq \frac{\eta_{H}\epsilon_{H}L}{\rho}.   
\end{align*}
\end{lemma}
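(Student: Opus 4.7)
The plan is a two-trajectory coupling argument in the spirit of \citet{jin2017escape}. Set $\Hb = \nabla^2 F(\xb_{m_s})$ and $\wb_t = \xb_t - \xb_t'$, and argue by contradiction. I will assume that both trajectories remain inside $\mathbb{B}_{\xb_{m_s}}\big(\eta_H\epsilon_H L/\rho\big)$ for every $t\in\{m_s+1,\ldots,m_s+\ell\}$, and show that the component of $\wb_t$ along the smallest-eigenvalue direction $\eb_1$ of $\Hb$ must grow geometrically at rate roughly $(1+\eta_H\gamma)$; by the choice of $\ell$ this forces $\|\wb_\ell\|_2$ to exceed the diameter of the confinement ball, giving the contradiction.

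First I would write out the recursion $\wb_{t+1} = \wb_t - \eta_H(\db_t - \db_t')$, which is well-defined because both trajectories use the same samples $\bxi_{t+1}^{i}$ and the same fixed step size $\eta_H$. Splitting $\db_t - \db_t' = (\nabla F(\xb_t) - \nabla F(\xb_t')) + (\bepsilon_t - \bepsilon_t')$ and invoking Hessian Lipschitzness, the first piece equals $\Hb\wb_t$ plus a residual of norm at most $\frac{\rho}{2}(\|\xb_t - \xb_{m_s}\|_2 + \|\xb_t' - \xb_{m_s}\|_2)\|\wb_t\|_2$, which under the contradiction hypothesis is bounded by $\eta_H\epsilon_H L\|\wb_t\|_2$, i.e.\ an $(\eta_H L)$-fraction of the leading linear term. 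For the stochastic noise I would apply Proposition \ref{prop:equal} to both trajectories and subtract: because the two trajectories use identical samples, the pure-variance increments of Lemma \ref{lm:boundeps2} cancel and only the Hessian-Lipschitz differences survive, with per-step martingale norm $O\big((1+a)L\|\wb_{t+1}\|_2/b\big)$. Repeating the Azuma calculation on the stopped process (stopping at the first time confinement fails) and using $a \leq \eta_H\gamma$ together with the lower bound on $b$ in the statement yields, with probability at least $1-\delta$, the uniform bound $\|\bepsilon_t - \bepsilon_t'\|_2 \leq (\eta_H\gamma/8)\cdot\max_{s\leq t}\|\wb_s\|_2$ for every $t\leq m_s+\ell$.

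Combining everything, the recursion becomes $\wb_{t+1} = (\Ib - \eta_H\Hb)\wb_t + (\text{error})$ with the error term bounded by $(\eta_H^2 L\epsilon_H + \eta_H^2\gamma/8)\cdot\max_{s\leq t}\|\wb_s\|_2$. Decomposing $\wb_t$ into its $\eb_1$-component $\alpha_t := \langle\eb_1, \wb_t\rangle$ and its orthogonal complement, a standard induction of the type used in \citet{jin2017escape} shows jointly $\alpha_t \geq (1+\eta_H\gamma/2)^{t-m_s-1}r_0/2$ and $\|\wb_t - \alpha_t\eb_1\|_2 \leq \alpha_t/2$, so $\|\wb_t\|_2 \geq \alpha_t/2$. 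At $t = m_s+\ell$ the choice $\ell = 2\log(8\epsilon_H\rho^{-1}r_0^{-1})/(\eta_H\gamma)$ gives $\|\wb_\ell\|_2 \gtrsim \epsilon_H/\rho$, which is strictly larger than $2\eta_H\epsilon_H L/\rho$ since $\eta_H L \leq 1/10$, yielding the contradiction and thus the lemma. The main obstacle is the concentration step for $\bepsilon_t - \bepsilon_t'$: unlike in Lemma \ref{lm:boundeps2}, the noise bound depends on $\|\wb_s\|_2$ itself, so one needs the stopping-time trick together with STORM's long-memory recursion (handled cleanly via Proposition \ref{prop:equal}) to avoid a circular argument.
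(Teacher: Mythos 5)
Your overall strategy coincides with the paper's: couple two STORM trajectories that differ by $r_0\eb_1$, use Proposition \ref{prop:equal} to turn $\yb_t := \bepsilon_t - \bepsilon_t'$ into a weighted martingale, argue that the $\sigma$-scale noise cancels under the shared samples, apply Azuma (Lemma \ref{lm:ah-vec}), and then run an induction showing $\|\wb_t\|_2$ grows at rate $(1+\eta_H\gamma)$ until it exceeds the confinement radius $4\eta_H\epsilon_HL/\rho$. The projection onto $\eb_1$ versus the paper's Duhamel expansion $(1-\eta\cH)^{t-m_s-1}\wb_{m_s+1} - \eta\sum(\cdots)$ is a purely presentational difference, since $\wb_{m_s+1} = r_0\eb_1$ and $\eb_1$ is an eigendirection, and your stopping-time formalization is an equivalent way to phrase the paper's direct induction.

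However, there is a genuine gap in the martingale step. You state the per-step martingale norm as $O\big((1+a)L\|\wb_{t+1}\|_2/b\big)$. Plugging this into Azuma with the STORM $(1-a)^{-t}$ weighting gives a cumulative bound $\|\yb_t\|_2 \lesssim \sqrt{\log(4/\delta)/(ab)}\,L\max_s\|\wb_s\|_2 \approx L\max_s\|\wb_s\|_2/56$, not the $(\eta_H\gamma/8)\max_s\|\wb_s\|_2$ you claim. Multiplied by the step size and summed over $\ell \approx 2\log(\cdot)/(\eta_H\gamma)$ steps, this produces a cumulative perturbation of order $\eta_H L \ell\cdot\max\|\wb\|_2/56 \approx \log(\cdot)/(56\gamma)\cdot\max\|\wb\|_2$, which is not controlled. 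What the paper's Lemma \ref{Lemma:ybound} actually exploits — and what you need — is that the surviving martingale increment is $\big(\cH_{t+1,i}-\cH\big)\big(\wb_{t+1}-(1-a)\wb_t\big) + \big(\Delta_{t+1,i}-\Delta_{t+1}\big)\wb_{t+1} + (1-a)\big(\Delta_t-\hat\Delta_{t+1,i}\big)\wb_t$. The first term carries the full $L$-scale Hessian difference but is hit against $\wb_{t+1}-(1-a)\wb_t$, whose norm is $O\big((\eta_H L + a)\|\wb_t\|_2\big)$, not $\|\wb_t\|_2$; the remaining two are Hessian-Lipschitz corrections of order $\rho D_\tau\|\wb\|_2 = O(\eta_H\gamma L\|\wb\|_2)$. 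So the correct per-step bound has an additional factor of $\eta_H L + a + \rho D_\tau/L\lesssim \eta_H(L+\gamma)$ relative to yours, and it is precisely this extra factor that makes the Azuma outcome land at the $\eta_H\gamma L\max\|\wb\|_2$ scale needed to keep the accumulated noise below half the leading term. Without restoring this refinement — i.e.\ without bounding $\|\wb_{t+1}-(1-a)\wb_t\|_2$ separately rather than treating it as $\|\wb_{t+1}\|_2$ — the final step of the induction does not close.
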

\begin{proof}
See Appendix \ref{proof:lemma:small stuck2}. 
\end{proof}

\begin{proof}[Proof of Lemma~\ref{lm:hessiandescent2}]
We assume $\lambda_{\min}(\nabla^2 F(\xb_{m_s})) <-\eh$ and prove our statement by contradiction. 
Lemma \ref{lemma:small stuck2} shows that, in the random perturbation ball at least one of two points in the $\eb_{1}$ direction will escape the saddle point if their distance is larger than $r_{0} = \frac{\delta r}{\sqrt{d}}$. Thus, the probability of the starting point $\xb_{m_{s}+1} \sim \mathbb{B}_{\xb_{m_{s}}}(r)$ located in the stuck region uniformly is less than $\delta$. Then with probability at least $1 - 2\delta$, 
\begin{align}
\exists m_s < t <m_s +\ttres,  \|\xb_t - \xb_{m_s}\|_2  \geq \frac{L\etah\eh}{\rho}. \label{help1}
\end{align}
Suppose $\algstorm$ does not break, then for any $m_s < t <m_s +\ttres$,
\begin{align}
\|\xb_t - \xb_{m_s}\|_2  \leq \sum_{i=m_s}^{t-1}\|\xb_{i+1} - \xb_i\|_2 \leq \sqrt{(t-m_s)\sum_{i=m_s}^{t-1} \|\xb_{i+1} - \xb_i\|_2^2} \leq (t-m_s)\sqrt{\barD},\notag
\end{align}
where the first inequality is due to the triangle inequality and the second inequality is due to Cauchy-Schwarz inequality. Thus, by the selection of $\barD$, we have
\begin{align}
    \|\xb_t - \xb_{m_s}\|_2 \leq(t-m_s)\sqrt{\barD} \leq \ttres \sqrt{\barD} < \frac{L\etah\eh}{\rho},\notag
\end{align}
which contradicts \eqref{help1}. Therefore, we know that with probability at least $1-2\delta$, $\lambda_{\min}(\nabla^2 F(\xb_{m_s})) \geq -\eh$. 
\end{proof}

\subsection{Proof of Lemma \ref{lm:localization2}}\label{proof:lm:localization2}

\begin{proof}[Proof of Lemma \ref{lm:localization2}]

Suppose $m_s < i <t_{s+1}$. Then with probability at least $1-\delta$, then by Lemma \ref{lm:basic} we have
\begin{align}
F(\xb_{i+1}) 
&\leq F(\xb_i)+ \frac{\eta_i}{2}\|\bepsilon_i\|_2^2 - \bigg(\frac{1}{2\eta_i} - \frac{L}{2}\bigg)\|\xb_{i+1} - \xb_i\|_2^2 \notag \\
&\leq F(\xb_i)+ \frac{\etah}{2}\frac{2^{20}\log(4/\delta)^{2}\sigma^2}{b^2} - \frac{1}{4\etah}\|\xb_{i+1} - \xb_i\|_2^2\label{eq:local1}
\end{align}
where the the second inequality holds due to Lemma \ref{lm:boundeps2} and the fact that for any $m_s < i < t_{s+1}$, $\eta_i \leq \etah \leq 1/(2L)$. Taking summation of $\eqref{eq:local1}$ from $i=m_{s}+1$ to $t-1$, we have
\begin{align}
F(\xb_{t}) &\leq F(\xb_{m_{s}+1}) + 2^{19}\etah \log(4/\delta)^{2}(t-m_s-1)\frac{\sigma^2}{b^2} - \frac{1}{4\etah}\sum_{i=m_{s}+1}^{t-1}\|\xb_{i+1} - \xb_i\|_2^2.
\end{align}
Finally, we have
\begin{align}
    F(\xb_{m_{s}+1}) - F(\xb_{t_{s+1}}) &\geq \sum_{i=m_{s}+1}^{t_{s+1}-1}\frac{\|\xb_{i+1} - \xb_i\|_2^2}{4\etah} - 2^{19}\log(4/\delta)^{2}(t-m_s-1)\etah \frac{\sigma^2}{b^2} \notag \\
    &= (t_{s+1}-m_s-1)\bigg(\frac{\barD}{4\etah} - \frac{2^{19}\log(4/\delta)^{2}\etah\sigma^2}{b^2}\bigg)\notag\\
    &= (t_{s+1}-m_s-1)\bigg(\frac{\sigma^{2}}{16\etah b^2L^{2}} - \frac{2^{19}\log(4/\delta)^{2}\etah\sigma^2}{b^2}\bigg)\notag\\
    &\geq (t_{s+1}-m_s-1)\frac{4\log(4/\delta)^{2}\etah\sigma^2}{b^2}\label{eq:Afternoise2},
\end{align}
where the last inequality is by the selection of $\etah \leq 1/ \big(2^{12}L\log(4/\delta)\big)$.
For $i = m_s$, by Lemma \ref{lm:basic} we have
\begin{align}
     F(\xb_{m_{s}+1}) &\leq F(\xb_t) + (2\|\db_t\|_2 + 2\|\bepsilon_{t}\|_{2}+ Lr/2)r\notag\\
     &\leq F(\xb_{m_{s}}) + (4\epsilon+ Lr/2)r\notag\\
     &\leq F(\xb_{m_{s}}) + \frac{2\log(4/\delta)^{2}\etah\sigma^2}{b^2}\label{eq:Atnoise2},
\end{align}
where the last inequality is by $r \leq \min\big\{\log(4/\delta)^{2}\etah\sigma^{2}/(4b^2\epsilon), \sqrt{2\log(4/\delta)^{2}\etah\sigma^2/(b^2L)}\big\}$.

Combining \eqref{eq:Afternoise2} and \eqref{eq:Atnoise2} we have that 
\begin{align*}
F(\xb_{m_{s}}) - F(\xb_{t_{s+1}}) & = F(\xb_{m_{s}}) - F(\xb_{m_{s}+1}) + F(\xb_{m_{s}+1}) - F(\xb_{t_{s+1}})\notag \\
& \geq(t_{s+1}-m_s-1) \frac{4\log(4/\delta)^{2}\etah\sigma^2}{b^2} - \frac{2\log(4/\delta)^{2}\etah\sigma^2}{b^2}\notag \\
& \geq (t_{s+1}-m_s)\frac{\log(4/\delta)^{2}\etah\sigma^2}{b^2},   
\end{align*}
where we use the fact that $t_{s+1} - m_s \geq 2$. 
\end{proof}

\section{Proof of Lemmas in Section \ref{sec:induction}}

\subsection{Proof of Lemma~\ref{lemma:small stuck2}}\label{proof:lemma:small stuck2}

Define $\wb_{t} := \xb_{t} - \xb_{t}'$ as the distance between the two coupled sequences. By the construction, we have that $\wb_{0} = r_{0}\eb_{1}$, where $\eb_{1}$ is the smallest eigenvector direction of Hessian $\cH:=\nabla^2F(\xb_{m_{s}})$. 
\begin{align*}
\wb_{t} &= \wb_{t-1} - \eta(\db_{t-1} - \db_{t-1}')\\
&= \wb_{t-1} - \eta(\nabla F(\xb_{t-1}) - \nabla F(\xb_{t-1}') + \db_{t-1} -F(\xb_{t-1}) - \db_{t-1}' + \nabla F(\xb_{t-1}'))\\
&= \wb_{t-1} - \eta\bigg[(\xb_{t-1}-\xb_{t-1}')\int_{0}^{1}\nabla^{2}F(\xb_{t-1}' + \theta(\xb_{t-1}-\xb_{t-1}'))d\theta \\
&\qquad + \db_{t-1} -F(\xb_{t-1}) - \db_{t-1}' + F(\xb_{t-1}')\bigg]\\
&= (1-\eta \cH)\wb_{t-1} - \eta(\Delta_{t-1}\wb_{t-1} + \yb_{t-1}),
\end{align*}
where 
\begin{align}
    &\Delta_{t-1} := \int_{0}^{1}\big(\nabla^{2}F(\xb_{t-1}' + \theta(\xb_{t-1}-\xb_{t-1}')) - \cH\big)d\theta,\notag \\
    &\yb_{t-1} := \db_{t-1} - \nabla F(\xb_{t-1})-\db_{t-1}' + \nabla F(\xb_{t-1}') = \bepsilon_{t-1} - \bepsilon_{t-1}'.\notag
\end{align}
Recursively applying the above equation, we get
\begin{align}
\wb_{t}=(1-\eta \cH)^{t-m_{s}-1}\wb_{m_{s}+1} - \eta\sum_{\tau=m_{s}+1}^{t-1}(1-\eta\cH)^{t-1-\tau}(\Delta_{\tau}\wb_{\tau} + \yb_{\tau})\label{eq:small1}.       
\end{align}
We want to show that the first term of \eqref{eq:small1} dominates the second term. Next Lemma is essential for the proof of Lemma~\ref{lemma:small stuck2}, which bounds the norm of $\yb_{t}$.

\begin{lemma}\label{Lemma:ybound}
Under Assumption~\ref{asm:lip}, we have following inequality holds,
\begin{align}
\|\yb_{t}\|_2 &\leq 2\sqrt{\log(4/\delta)}b^{-1/2}a^{-1/2}\Big(2L\max_{m_{s}<\tau<t}\|\wb_{\tau+1} - \wb_{\tau}\|_{2}\notag\\
&\quad + \max_{m_{s}<\tau \leq t}(2aL + 4\rho D_{\tau})\cdot \max_{m_{s}<\tau\leq t}\|\wb_{\tau}\|_{2}\Big) +  4\sqrt{\log(4/\delta)}b^{-1/2}Lr_{0} \label{eq:small2}, 
\end{align}
where $D_{\tau} = \max\{\|\xb_{\tau} - \xb_{m_{s}}\|_{2}, \|\xb_{\tau}' - \xb_{m_{s}}\|_{2}\}$.
\end{lemma}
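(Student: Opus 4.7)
The plan is to mirror the proof of Lemma~\ref{lm:boundeps2}, starting from a Proposition~\ref{prop:equal}-style recursion for the coupled difference $\yb_t = \bepsilon_t - \bepsilon_t'$, and then applying the vector Azuma--Hoeffding inequality (Lemma~\ref{lm:ah-vec}) to the resulting martingale difference sequence. Subtracting the two coupled STORM updates yields, for $t\ge m_s+1$,
\begin{align*}
\frac{\yb_{t+1}}{(1-a)^{t+1}}-\frac{\yb_t}{(1-a)^t}
=\frac{1}{(1-a)^{t+1}}\sum_{i=1}^b \yb_{t,i},
\qquad
\yb_{t,i}=\frac{a}{b}A_{t,i}+\frac{1-a}{b}B_{t,i},
\end{align*}
where $A_{t,i}=\bigl(\nabla f(\xb_{t+1};\bxi_{t+1}^i)-\nabla F(\xb_{t+1})\bigr)-\bigl(\nabla f(\xb_{t+1}';\bxi_{t+1}^i)-\nabla F(\xb_{t+1}')\bigr)$ and $B_{t,i}$ is the analogous SARAH-style difference involving both $\xb_t,\xb_t'$ and $\xb_{t+1},\xb_{t+1}'$. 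Since $\EE[\nabla f(\xb;\bxi)-\nabla F(\xb)\mid \xb]=0$, the sequence $\{\yb_{t,i}\}$ is a martingale difference sequence with respect to the natural filtration ordered by $(t,i)$.

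The key step is to obtain an almost-sure bound on $\|\yb_{t,i}\|_2$. For $A_{t,i}$, Assumption~\ref{asm:lip} gives $\|A_{t,i}\|_2\le 2L\|\wb_{t+1}\|_2$. For $B_{t,i}$, introduce $\ub_{t+1}(\xb):=\nabla f(\xb;\bxi_{t+1}^i)-\nabla F(\xb)$; then the fundamental theorem of calculus gives $\ub_{t+1}(\xb_{t+1})-\ub_{t+1}(\xb_{t+1}')=\int_0^1 \nabla\ub_{t+1}(\xb_{t+1}'+\theta\wb_{t+1})\wb_{t+1}\,d\theta$ and similarly for the $(\xb_t,\xb_t')$ pair. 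Adding and subtracting $\int_0^1 \nabla\ub_{t+1}(\xb_{t+1}'+\theta\wb_{t+1})\wb_t\,d\theta$ splits $B_{t,i}$ into (i) an integral with integrand $\nabla\ub_{t+1}(\xb_{t+1}'+\theta\wb_{t+1})(\wb_{t+1}-\wb_t)$, bounded by $2L\|\wb_{t+1}-\wb_t\|_2$ via $L$-smoothness of $\nabla f(\cdot;\bxi)$ and $\nabla F$, and (ii) an integral with integrand $\bigl[\nabla\ub_{t+1}(\xb_{t+1}'+\theta\wb_{t+1})-\nabla\ub_{t+1}(\xb_t'+\theta\wb_t)\bigr]\wb_t$. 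For (ii), I expand through the base point $\xb_{m_s}$ and apply Hessian-Lipschitzness of both $f(\cdot;\bxi)$ and $F$ to bound $\|\nabla\ub_{t+1}(\xb)-\nabla\ub_{t+1}(\xb_{m_s})\|_2\le 2\rho\|\xb-\xb_{m_s}\|_2$, which yields $\le 4\rho D_{t+1}\|\wb_t\|_2$ after a triangle inequality on the two endpoints. Combining gives the uniform bound $\|\yb_{t,i}\|_2\le b^{-1}\bigl(2L\|\wb_{t+1}-\wb_t\|_2+(2aL+4\rho D_{t+1})\|\wb_{t+1}\|_2\bigr)$.

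Finally, telescope to obtain $\yb_t=(1-a)^{t-m_s-1}\yb_{m_s+1}+\sum_{\tau=m_s+1}^{t-1}(1-a)^{t-1-\tau}\sum_{i=1}^b \yb_{\tau,i}$, and apply Lemma~\ref{lm:ah-vec} to the double sum. The resulting variance factor is $\sum_{\tau}(1-a)^{2(t-1-\tau)}\cdot b\cdot (b^{-1}C^{\max})^2\le C_{\max}^2/(ab)$ after using the geometric series bound $\sum_\tau (1-a)^{2\tau}\le 1/a$, where $C^{\max}$ is the uniform upper bound on the integrand expressed through the maxima in the statement. This produces the $2\sqrt{\log(4/\delta)}\,b^{-1/2}a^{-1/2}$ prefactor on the bracketed term. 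The initial contribution $\yb_{m_s+1}$ is handled separately: it is itself an average over $b$ i.i.d.\ mean-zero vectors each of norm at most $2L\|\wb_{m_s+1}\|_2=2Lr_0$, so a single Azuma--Hoeffding application gives $\|\yb_{m_s+1}\|_2\le 4\sqrt{\log(4/\delta)}\,b^{-1/2}Lr_0$, yielding the additive $r_0$ term. The main obstacle I expect is the second integral in the decomposition of $B_{t,i}$: the naive bound $2\rho\|\xb_{t+1}'-\xb_t'\|_2+2\rho\|\wb_{t+1}-\wb_t\|_2$ does not give the required $\rho D_\tau$ scaling, and one must route the Hessian-Lipschitz estimate through the base point $\xb_{m_s}$ to couple $f$ and $F$ correctly.
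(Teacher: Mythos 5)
Your proof is correct and follows essentially the same route as the paper's: both start from the Proposition~\ref{prop:equal}-style martingale recursion for $\yb_t/(1-a)^t$, both obtain an almost-sure bound on the martingale difference by Taylor-expanding around the base point $\xb_{m_s}$ (this is indeed the key step—your observation that the Hessian-Lipschitz estimate \emph{must} be routed through $\xb_{m_s}$ rather than applied directly between consecutive iterates is exactly right), and both close with vector Azuma--Hoeffding plus a separate treatment of $\yb_{m_s+1}$. The only genuine difference is cosmetic algebraic organization: you decompose $\yb_{t,i}=\frac{a}{b}A_{t,i}+\frac{1-a}{b}B_{t,i}$, keeping the STORM noise term and the SARAH-style correction separate, and work with the composite noise gradient $\ub_{t+1}(\xb)=\nabla f(\xb;\bxi_{t+1}^i)-\nabla F(\xb)$ in the fundamental-theorem-of-calculus step. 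The paper instead first simplifies $\bepsilon_{t,i}$ to the form in eq.~\eqref{eq:newview}, introduces the base Hessians $\cH_{t+1,i}=\nabla^2 f(\xb_{m_s};\bxi_{t+1}^i)$ and $\cH=\nabla^2 F(\xb_{m_s})$ together with the deviation integrals $\Delta_{t+1,i},\hat{\Delta}_{t+1,i},\Delta_{t+1}$, and regroups as $(\cH_{t+1,i}-\cH)(\wb_{t+1}-(1-a)\wb_t)+(\Delta_{t+1,i}-\Delta_{t+1})\wb_{t+1}+(1-a)(\Delta_t-\hat\Delta_{t+1,i})\wb_t$. Unwinding either decomposition yields the identical per-step bound $b^{-1}\bigl(2L\|\wb_{\tau+1}-\wb_\tau\|_2+2aL\|\wb_\tau\|_2+2\rho D_{\tau+1}\|\wb_{\tau+1}\|_2+2\rho D_\tau\|\wb_\tau\|_2\bigr)$, whence the same maxes and the same $2\sqrt{\log(4/\delta)}\,b^{-1/2}a^{-1/2}$ prefactor after the geometric series $\sum(1-a)^{2\tau}\le a^{-1}$. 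Your version is arguably slightly cleaner conceptually since the Hessian-Lipschitz and smoothness constants for $\ub$ (namely $2\rho$ and $2L$) absorb the $f$/$F$ coupling automatically, while the paper's version makes the base-point Hessian explicit from the start, which is perhaps easier to index-check.
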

\begin{proof}[Proof of Lemma~\ref{Lemma:ybound}]
By Proposition \ref{prop:equal}, we have that
\begin{align*}
\frac{\yb_{t+1}}{(1-a)^{t+1}} - \frac{\yb_t}{(1-a)^{t}} &= \frac{\bepsilon_{t+1}}{(1 - a)^{t+1}} - \frac{\bepsilon_{t}}{(1 - a)^{t}} - \frac{\bepsilon_{t+1}'}{(1 - a)^{t+1}} + \frac{\bepsilon_{t}'}{(1 - a)^{t}} \notag \\
&= \frac{1}{(1-a)^{t+1}}\sum_{i\leq b}[\bepsilon_{t,i} - \bepsilon_{t,i}'],
\end{align*}
where $\bepsilon_{t,i}$ is the same as that in Proposition \ref{prop:equal}:
\begin{align}
    \bepsilon_{t,i} &= \frac{a}{b}[\nabla f(\xb_{t+1}; \bxi_{t+1}^{i}) - \nabla F(\xb_{t+1})] \notag\\
    &\qquad + \frac{1 - a}{b}\big[\nabla F(\xb_t) - \nabla f(\xb_t; \bxi_{t+1}^{i}) - \nabla F(\xb_{t+1}) + \nabla f(\xb_{t+1}; \bxi_{t+1}^{i})]\notag\\
    &= \frac{1}{b}[\nabla f(\xb_{t+1}; \bxi_{t+1}^{i}) - \nabla F(\xb_{t+1})] + \frac{1 - a}{b}\big[\nabla F(\xb_t) - \nabla f(\xb_t; \bxi_{t+1}^{i})],\label{eq:newview}
\end{align}
where we rewrite $\bepsilon_{t,i}$ as \eqref{eq:newview} because now we want bound the $\bepsilon_{t} - \bepsilon_{t}'$ by the distance between two sequence. $\bepsilon_{t,i}'$ is defined similarly as follows 
\begin{align*}
    \bepsilon_{t,i}' &=\frac{1}{b}[\nabla f(\xb_{t+1}; \bxi_{t+1}^{i}) - \nabla F(\xb_{t+1})] + \frac{1 - a}{b}\big[\nabla F(\xb_t) - \nabla f(\xb_t; \bxi_{t+1}^{i})].
\end{align*}
It is easy to verify that $\{\bepsilon_{t,i} - \bepsilon_{t,i}'\}$ forms a martingale difference sequence. We now bound $\|\bepsilon_{t,i} - \bepsilon_{t,i'}\|_{2}^{2}$. Denote $\cH_{t+1,i} =\nabla^2f(\xb_{m_{s}}; \bxi_{t+1}^{i})$, then we introduce two terms 
\begin{align*}
&\Delta_{t+1, i} := \int_{0}^{1}\big(\nabla^{2}f(\xb_{t+1}' + \theta(\xb_{t+1}-\xb_{t+1}'); \bxi_{t+1}^{i}) - \cH_{t+1,i}\big)d\theta\\
&\hat{\Delta}_{t+1, i} := \int_{0}^{1}\big(\nabla^{2}f(\xb_{t}' + \theta(\xb_{t}-\xb_{t}'); \bxi_{t+1}^{i}) - \cH_{t+1,i}\big)d\theta, 
\end{align*}
By Assumption~\ref{asm:lip}, we have $\|\Delta_{t+1,i}\|_{2}\leq \rho\max_{\theta\in[0,1]}\|\xb_{t+1}' + \theta(\xb_{t+1}-\xb_{t+1}') - \xb_{m_{s}+1}\|_{2} \leq \rho D_{t+1}$, similarly we have $\|\hat{\Delta}_{t+1, i}\|_{2} \leq \rho D_{t}$ and $\Delta_{t+1} \leq \rho D_{t+1}$.

Now we bound $\bepsilon_{t,i} - \bepsilon_{t,i}'$, 
\begin{align}
b(\bepsilon_{t,i} - \bepsilon_{t,i}')
&= \Big([\nabla f(\xb_{t+1}; \bxi_{t+1}^{i}) - \nabla F(\xb_{t+1})] + (1 - a)\big[\nabla F(\xb_t) - \nabla f(\xb_t; \bxi_{t+1}^{i})\big]\Big)\notag\\
&\quad- \Big([\nabla f(\xb_{t+1}'; \bxi_{t+1}^{i}) - \nabla F(\xb_{t+1}')] - (1-a)\big[\nabla F(\xb_{t}') - \nabla f(\xb_{t}'; \bxi_{t+1}^{i})\big]\Big)\notag\\
&= \big(\cH_{t+1,i}\wb_{t+1}+ \Delta_{t+1,i}\wb_{t+1} - \cH\wb_{t+1} - \Delta_{t+1}\wb_{t+1} + (1-a)\cH\wb_{t}\notag\\
&\quad  + (1-a)\Delta_{t}\wb_{t} -  (1-a)\cH_{t+1,i}\wb_{t}  - (1-a)\hat{\Delta}_{t+1,i}\wb_{t} \big)\notag\\
&= \big(\cH_{t+1,i} - \cH\big)\big(\wb_{t+1} - (1-a)\wb_{t}\big) + \big(\Delta_{t+1,i}-\Delta_{t+1}\big)\wb_{t+1}\notag\\
&\qquad + (1-a)\big(\Delta_{t} -  \hat{\Delta}_{t+1,i})\wb_{t}\label{eq:sma1l3}.
\end{align}
This implies the LHS of \eqref{eq:sma1l3} has the following bound.
\begin{align*}
\|b(\bepsilon_{t,i} - \bepsilon_{t,i}')\|_{2} &\leq 2L\|\wb_{t+1} - (1-a)\wb_{t}\|_{2} + 2\rho D_{t+1}^{x}\|\wb_{t+1}\|_{2} + 2\rho D_{t}^{x}\|\wb_{t}\|_{2}\\
&\leq 2L\|\wb_{t+1} - \wb_{t}\|_{2} + 2\rho D_{t+1}^{x}\|\wb_{t+1}\|_{2} + (2aL + 2\rho D_{t}^{x})\|\wb_{t}\|_{2}\\
&\leq \underbrace{2L\max_{m_{s}<\tau<t}\|\wb_{\tau+1} - \wb_{\tau}\|_{2} + \max_{m_{s}<\tau \leq t}(2aL + 4\rho D_{\tau})\cdot \max_{m_{s}<\tau\leq t}\|\wb_{\tau}\|_{2}}_{M}
\end{align*}
where the first inequality is by the gradient Lipschitz Assumption and Hessian Lipschitz Assumption~\ref{asm:lip}, the second inequality is by triangle inequality. Therefore we have 
\begin{align*}
\|\bepsilon_{t,i} - \bepsilon_{t,i}' \|_{2}^{2} \leq \frac{M^{2}}{b^{2}}     
\end{align*}
Furthermore, by Azuma Hoeffding inequality(See Lemma \ref{lm:ah-vec} for detail), with probability at least $1-\delta$, we have that for any $t>0$, 
\begin{align*}
    \bigg\|\frac{\yb_{t}}{(1 - a)^{t}} - \frac{\yb_{m_{s}+1}}{(1-a)^{m_{s}+1}}\bigg\|_2^2 &=  \bigg\|\sum_{\tau=m_{s}+1}^{t-1}\bigg(\frac{\yb_{\tau+1}}{(1 - a)^{\tau+1}} - \frac{\yb_{\tau}}{(1 - a)^{\tau}} \bigg) \bigg\|_2^2 \\ 
    &= \bigg\|\sum_{\tau=m_{s}+1}^{t-1}\bigg(\frac{1}{(1-a)^{\tau+1}}\sum_{i\leq b}[\bepsilon_{\tau,i} - \bepsilon_{\tau,i}'] \bigg) \bigg\|_2^2 \\ 
    &\le 4\log(4 / \delta) \bigg(\sum_{i=m_{s}+1}^{t-1}b\cdot \frac{M^{2}}{(1 - a)^{2\tau + 2}b^{2}}\bigg).
\end{align*}
Multiply $(1-a)^{2t}$ on both side, we get
\begin{align*}
    \|\yb_{t} - (1-a)^{t-m_{s}-1}\yb_{m_{s}+1}\|_2^2 
    &\leq 4b^{-1}\log(4 / \delta) \sum_{\tau=m_{s}+1}^{t-1}(1 - a)^{2t-2\tau - 2}M^{2} \notag\\
    &\leq 4\log(4 / \delta)b^{-1}a^{-1}M^{2}, 
\end{align*}
where the last inequality is by $\sum_{i=0}^{t-1}(1 - a)^{2t-2i - 2} \leq a^{-1}$. Furthermore, by triangle inequality we have

\begin{align}
\|\yb_{t}\|_{2} \leq  2\sqrt{\log(4 / \delta)}b^{-1/2}a^{-1/2}M + (1-a)^{t-m_{s}-1}\|\yb_{m_{s}+1}\|_{2}. \label{eq: ybound1}   
\end{align}
Next, we have $\|\nabla f(\xb_{m_{s}+1}; \bxi_{m_{s}+1}^{i}) - \nabla F(\xb_{m_{s}+1}') - \nabla f(\xb_{m_{s}+1}'; \bxi_{m_{s}+1}^{i}) + \nabla F(\xb_{m_{s}+1}')\|_{2}\leq 2Lr_{0}$ due to Assumption~\ref{asm:lip}. Then by Azuma Inequality (See Lemma \ref{lm:ah-vec}), we have with probability at least $1-\delta$,
\begin{align}
\|\yb_{m_{s}+1}\|_{2}^{2} &= \|\db_{m_{s}+1} - \nabla F(\xb_{m_{s}+1})-\db_{m_{s}+1}' + \nabla F(\xb_{m_{s}+1}')\|_{2}^{2}\notag\\
&= \bigg\|\frac{1}{b}\sum_{i\leq b}[\nabla f(\xb_{m_{s}+1}; \bxi_{m_{s}+1}^{i}) - \nabla F(\xb_{m_{s}+1}') - \nabla f(\xb_{m_{s}+1}'; \bxi_{m_{s}+1}^{i}) + \nabla F(\xb_{m_{s}+1}')]\bigg\|_{2}^{2}\notag\\
&\leq \frac{4\log(4/\delta)4L^{2}r_{0}^{2}}{b}\label{eq:ybound2}.
\end{align}
Plugging \eqref{eq:ybound2} into \eqref{eq: ybound1} gives
\begin{align*}
\|\yb_{t}\|_2 &\leq 2\sqrt{\log(4/\delta)}b^{-1/2}a^{-1/2}\Big(2L\max_{m_{s}<\tau<t}\|\wb_{\tau+1} - \wb_{\tau}\|_{2}\notag\\
&\quad + \max_{m_{s}<\tau \leq t}(2aL + 4\rho D_{\tau})\cdot \max_{m_{s}<\tau\leq t}\|\wb_{\tau}\|_{2}\Big) +  4\sqrt{\log(4/\delta)}b^{-1/2}Lr_{0}.  
\end{align*}
\end{proof}

Now we can give a proof of Lemma~\ref{lemma:small stuck2}.
\begin{proof}[Proof of Lemma~\ref{lemma:small stuck2}]
We proof it by induction that
\begin{enumerate}
\item $\frac{1}{2}(1+\eta_{H} \gamma)^{t-m_{s}-1}r_{0} \leq  \|\wb_{t}\|_{2} \leq \frac{3}{2}(1+\eta_{H}\gamma)^{t-m_{s}-1}r_{0}.$
\item $\|y_{t}\|_{2} \leq 2\eta_{H} \gamma L(1+\eta_{H} \gamma)^{t-m_{s}-1}r_{0}$.
\end{enumerate}
First for $t=m_{s}+1$, we have $\|\wb_{m_{s}+1}\|_{2} = r_{0}$, $\|y_{m_{s}+1}\|_{2} \leq \sqrt{16b^{-1}\log(4/\delta)L^2r_{0}^2} \leq 2\eta_{H}\gamma L r_{0}$(See \eqref{eq:ybound2}), where $b \geq  2\eta_{H}^{-2}\gamma^{-2}\sqrt{\log(4/\delta)}$.
Assume they hold for all $m_{s} < \tau <t$, we now prove they hold for t. We bound $\wb_{t}$ first, we only need to show that second term of \eqref{eq:small1} is bounded by $\frac{1}{2}(1+\eta_{H}\gamma)^{t}r_0$.
\begin{align*}
&\bigg\|\eta_{H}\sum_{\tau=m_{s}+1}^{t-1}(1-\eta_{H}\cH)^{t-1-\tau}(\Delta_{\tau}\wb_{\tau} + \yb_{\tau})\bigg\|_{2}\notag \\
&\leq \eta_{H}\sum_{\tau=m_{s}+1}^{t-1}(1+\eta_{H}\gamma)^{t-1-\tau}(\|\Delta_{\tau}\|_{2}\|\wb_{\tau}\|_{2} + \|\yb_{\tau}\|_{2})\\
&\leq \eta_{H}\sum_{\tau=m_{s}+1}^{t-1}(1+\eta_{H}\gamma)^{t-m_{s}-2}r_{0}(\frac{3}{2}\|\Delta_{\tau}\|_{2}+ 2\eta_{H} \gamma L)\\
&\leq  \eta_{H}\sum_{\tau=m_{s}+1}^{t-1}(1+\eta_{H}\gamma)^{t-m_{s}-2}r_{0}(3\eta_{H}\epsilon_{H}L+ 2\eta_{H} \gamma L)\\
&= \eta_{H} \ell(1+\eta_{H}\gamma)^{t-m_{s}-2}r_{0}\cdot 5\eta_{H} \gamma L\\
&\leq 10\log(8 \eh \rho^{-1}r_{0}^{-1})\etah L (1+\eta_{H}\gamma)^{t-m_{s}-2}r_{0}\\
&\leq \frac{1}{2}(1+\eta_{H} \gamma)^{t-m_{s}-1}r_{0},
\end{align*}
where the first inequality is by the eigenvalue assumption over $\cH$, the second inequality is by the Induction hypothesis, the third inequality is by $\|\Delta_{\tau}\|_{2} \leq \rho D_{\tau} = \rho \max\{\|\xb_{\tau} - \xb_{m_{s}}\|_{2},\|\xb_{\tau}' - \xb_{m_{s}}\|_{2}\} \leq \etah\epsilon_{H}L + r\rho \leq 2\etah\epsilon_{H}L$, the fourth inequality is by the choice of $t - m_{s} -1 \leq \ell \leq 2\log(8\eh \rho^{-1}r_{0}^{-1})/(\etah\gamma)$, the last inequality is by the choice of $\etah\leq 1/(10\log(8\eh \rho^{-1}r_{0}^{-1})L)$.
Now we bound $\|\yb_{t}\|_{2}$ by \eqref{eq:small2}. We first get the bound for $L\|\wb_{i+1} - \wb_{i}\|_{2}$ as follows,
\begin{align}
&L\|\wb_{t+1} - \wb_{t}\|_{2} \notag\\ 
&= L\bigg\|-\etah\cH(I-\etah \cH)^{t- m_{s} - 2}\wb_{0} - \etah\sum_{\tau=m_{s}+1}^{t-2}\etah \cH(I-\etah \cH)^{t-2-\tau}(\Delta_{\tau}\wb_{\tau}+\yb_{\tau})  \notag \\
&\qquad +\etah(\Delta_{t-1}\wb_{t-1} + \yb_{t-1})\bigg\|_{2}\notag\\
&\overset{(i)}{\leq} L\etah \gamma(1+\etah \gamma)^{t-m_{s}-2}r_{0} + L\etah\bigg\|\sum_{\tau=m_{s}+1}^{t-2}\etah \cH(I-\etah \cH)^{t-2-\tau}(\Delta_{\tau}\wb_{\tau}+\yb_{\tau})\bigg\|_{2} \notag \\
&\qquad + L\etah\bigg\|\Delta_{t-1}\wb_{t-1} + \yb_{t-1}\bigg\|_{2}\notag\\
&\overset{(ii)}{\leq} L\etah \gamma(1+\etah \gamma)^{t-m_{s}-2}r_{0} \notag \\
&\qquad + L\etah\bigg[\bigg\|\sum_{\tau=m_{s}+1}^{t-2}\etah \cH(I-\etah \cH)^{t-2-\tau}\bigg\|_{2}+1\bigg]\max_{0\leq \tau \leq t-1}\bigg\|\Delta_{\tau}\wb_{\tau}+\yb_{\tau}\bigg\|_{2}\notag\\
&\overset{(iii)}{\leq}L\etah \gamma(1+\etah \gamma)^{t-m_{s}-2}r_{0} + L\etah\bigg[\sum_{\tau=m_{s}+1}^{t-2}\frac{1}{t-1-\tau}+1\bigg]\max_{0\leq \tau \leq t-1}\bigg\|\Delta_{\tau}\wb_{\tau}+\yb_{\tau}\bigg\|_{2}\notag\\
&\overset{(iv)}{\leq} L\etah \gamma(1+\etah \gamma)^{t-m_{s}-2}r_{0} +  L\etah[\log(t-m_{s}-1)+1] \cdot [5\etah \gamma L(1+\etah \gamma)^{t-m_{s}-2}r_{0}] \notag\\
&\overset{(v)}{\leq} 6L\etah \gamma(1+\etah \gamma)^{t-m_{s}-2}r_{0}+5\log(t-m_{s}-1) \gamma\eta_{H}^{2} L^{2}(1+\eta_{H} \gamma)^{t-m_{s}-2}r_{0}\label{eq:small4},
\end{align}
where (i) is by triangle inequality, (ii) is by the definition of max, (iii) is by $\|\etah \cH(I-\etah \cH)^{t-2-\tau}\|_{2} \leq \frac{1}{t-1-\tau}$, (iv) is due to $\|\Delta_{\tau}\|_{2} \leq \rho D_{\tau} \leq \rho (\eta_{H}\gamma L/\rho + r) \leq 2\gamma \eta_{H}L$, $\|\wb_{\tau}\|_{2}\leq 3(1+\eta_{H}\gamma)^{\tau-m_{s}-1}r_{0}/2$ and $\|\yb_{\tau}\|_{2} \leq 2\eta_{H} \gamma L(1+\eta_{H} \gamma)^{\tau-m_{s}-1}r_{0}$, (v) is due to $\etah \leq 1/L$.

We next get the bound of $ \max_{m_{s}<\tau \leq t}(2aL + 4\rho D_{\tau})\cdot \max_{m_{s}<\tau\leq t}\|\wb_{\tau}\|_{2}$ as follows
\begin{align}
\max_{m_{s}<\tau \leq t}(2aL + 4\rho D_{\tau})\cdot \max_{m_{s}<\tau\leq t}\|\wb_{\tau}\|_{2} &\leq (2aL + 8\gamma\eta_{H}L)\frac{3(1+\eta_{H} \gamma)^{t-m_{s}-1}}{2}r_{0}\notag\\
&\leq  15\gamma\eta_{H}L(1+\eta_{H} \gamma)^{t-m_{s}-1}r_{0}\label{eq:small5}.
\end{align}
where the first inequality is by $\rho D_{t} \leq \rho (\gamma\eta_{H}L/\rho + r) \leq 2\gamma\eta_{H}L$ and the induction hypothesis, last inequality is by $a \leq  \gamma\eta_{H}$.

Plugging \eqref{eq:small4} and \eqref{eq:small5}  into \eqref{eq:small2} gives,
\begin{align*}
\|\yb_{t}\|_2 &\leq 2\sqrt{\log(4/\delta)}b^{-1/2}a^{-1/2}\Big(2L\max_{m_{s}<\tau<t}\|\wb_{\tau+1} - \wb_{\tau}\|_{2}\notag\\
&\quad + \max_{m_{s}<\tau \leq t}(2aL + 4\rho D_{\tau})\cdot \max_{m_{s}<\tau\leq t}\|\wb_{\tau}\|_{2}\Big) +  4\sqrt{\log(4/\delta)}b^{-1/2}Lr_{0}\\
&\leq 2\sqrt{\log(4/\delta)}b^{-1/2}a^{-1/2}\Big(10\log(\ell) \gamma\eta_{H}^{2} L^{2} (1+\eta_{H} \gamma)^{t-m_{s}-1}r_{0}\\
&\quad +  27\gamma \etah L(1+\eta_{H} \gamma)^{t-m_{s}-1}r_{0} \Big) + 4\sqrt{\log(4/\delta)}b^{-1/2}Lr_{0}\\
&\leq  \underbrace{56\sqrt{\log(4/\delta)}b^{-1/2}a^{-1/2}\etah L \gamma (1+\eta_{H} \gamma)^{t-m_{s}-1}r_{0}}_{I_{1}} \notag \\
&\qquad + \underbrace{4\sqrt{\log(4/\delta)}b^{-1/2}(1+\eta_{H} \gamma)^{t-m_{s}-1}r_{0}}_{I_{2}}
\end{align*}
where the last inequality is by $\eta_{H} \leq 1/(10 L\log\ell)$. Now we bound $I_{1}$ and $I_{2}$ respectively. 
\begin{align*}
I_{1} &= 56\sqrt{\log(4/\delta)}b^{-1/2}a^{-1/2}\etah L\gamma (1+\eta_{H} \gamma)^{t-m_{s}-1}r_{0}\\
&= \eta_{H} \gamma L (1+\eta_{H} \gamma)^{t-m_{s}-1}r_{0},
\end{align*}
where the inequality is applying $b \geq  56^{2}\log(4/\delta)a^{-1}$.
Now we bound $I_{2}$ by applying $b\geq 16\log(4/\delta)\eta_{H}^{-2}L^{-2}\gamma^{-2}$,
\begin{align*}
I_{2} \leq \eta_{H} \gamma  L (1+\eta_{H} \gamma)^{t-m_{s}-1}r_{0}.   
\end{align*}
Then we obtain that
\begin{align*}
\|\yb_{t}\|_{2} \leq 2 \eta_{H} \gamma L (1+\eta_{H} \gamma)^{t-m_{s}-1}r_{0},
\end{align*}
which finishes the induction. So we have $\|\wb_{t}\|_{2}\geq \frac{1}{2}(1+\eta_{H} \gamma)^{t-m_{s}-1}r_{0}$. However, the triangle inequality give the bound
\begin{align*}
\|\wb_{t}\|_{2}&\leq \|\xb_{t}-\xb_{m_{s+1}}\|_{2} + \|\xb_{m_{s+1}}-\xb_{m_{s}}\|_{2} + \|\xb_{t}'-\xb_{m_{s}+1}'\|_{2} + \|\xb_{m_{s}+1}'-\xb_{m_{s}}'\|_{2}\\
&\leq 2r + 2\frac{\epsilon_{H}\eta_{H}L}{\rho}\\
&\leq 4\frac{\epsilon_{H}\eta_{H}L}{\rho},    
\end{align*}
where the last inequality is due to $r\leq \epsilon_{H}\eta_{H}L/\rho$. So we obtain that
\begin{align*}
t \leq \frac{\log(8\epsilon_{H}\eta_{H}L\rho^{-1}r_{0}^{-1})}{\log(1+\eta_{H} \gamma)}< \frac{2\log(8\epsilon_{H}\rho^{-1}r_{0}^{-1})}{\eta_{H} \gamma}.
\end{align*}
\end{proof}

\section{Auxiliary Lemmas}

We start by providing the Azuma–Hoeffding inequality under the vector settings.  
 
\begin{lemma}[Theorem 3.5, \citealt{pinelis1994optimum}]\label{lm:ah-vec}
Let $\bepsilon_{1:k} \in \RR^d$ be a vector-valued martingale
difference sequence with respect to $\cF_k$, i.e., for each $k \in [K]$, $\EE[\bepsilon_k|\cF_k] = 0$ and $\|\bepsilon_k\|_2 \le B_k$, then we have given $\delta \in (0, 1)$, w.p. $1 - \delta$, 
\begin{align*}
    \bigg\|\sum_{i=1}^K\bepsilon_k\bigg\|_2^2 \le 4\log(4 / \delta)\sum_{i=1}^KB_k^2.
\end{align*}
\end{lemma}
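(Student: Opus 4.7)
The plan is to derive the high-probability bound by directly invoking Pinelis's martingale tail inequality (Theorem~3.5, \citealt{pinelis1994optimum}) and inverting the resulting sub-Gaussian tail. The theorem, applied to a Hilbert-space valued martingale $M_K = \sum_{i=1}^K \bepsilon_i$ whose increments satisfy $\|\bepsilon_k\|_2 \le B_k$ almost surely conditional on $\cF_k$, yields a bound of the form
\begin{align*}
\PP\bigl(\|M_K\|_2 \ge t\bigr) \;\le\; C\exp\!\Bigl(-\tfrac{t^2}{c\sum_{i=1}^K B_i^2}\Bigr),
\end{align*}
for absolute constants $C, c$ drawn from Pinelis's formulation. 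The target conclusion is just the inverted form of this tail at a specifically chosen threshold.

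The key steps I would take are, first, to verify that the hypotheses of Pinelis's theorem are exactly those stated in the lemma: a martingale difference sequence in a (separable) Hilbert space, here $\RR^d$ with the Euclidean inner product, together with the almost-sure bound $\|\bepsilon_k\|_2 \le B_k$. Second, quote the resulting sub-Gaussian tail, which in the version we need takes the specific shape $\PP(\|M_K\|_2 \ge t) \le 4\exp(-t^2/(4\sum_i B_i^2))$ (this is the form that matches the constants in the lemma; it is a standard consequence of Pinelis's argument using the fact that $\|\cdot\|_2^2$ on a Hilbert space is $2$-smooth, so that $\exp(\lambda\|M_k\|_2 - \lambda^2\sum_{i\le k} B_i^2)$ can be shown to be a supermartingale for appropriate $\lambda$, followed by optimization in $\lambda$). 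Third, set the threshold $t = 2\sqrt{\log(4/\delta)\sum_{i=1}^K B_i^2}$ and check that $4\exp(-t^2/(4\sum_i B_i^2)) = 4\exp(-\log(4/\delta)) = \delta$. Taking complements and squaring gives exactly $\|\sum_{i=1}^K \bepsilon_i\|_2^2 \le 4\log(4/\delta)\sum_{i=1}^K B_i^2$ with probability at least $1-\delta$.

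The main obstacle is purely a bookkeeping one: matching the universal constants $(C,c)$ in Pinelis's original statement to the constants $(4,4)$ appearing in the lemma. Pinelis's paper states the inequality in several equivalent forms, and one must choose the one that gives the prefactor $4$ in front of both the exponential and the denominator; a naive scalar Azuma bound applied coordinatewise or via an $\epsilon$-net over the unit sphere would introduce a dimension-dependent factor $(3/\varepsilon)^d$, which we cannot afford, so the proposal genuinely relies on the Hilbert-space geometric fact underlying Pinelis's theorem. No additional ingredients (symmetrization, decoupling, chaining) are needed; once the correct form of Pinelis is cited, the proof is a one-line substitution and a solve-for-$t$ manipulation.
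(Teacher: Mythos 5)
Your proposal is correct and matches the paper's treatment: the paper states this lemma as a quoted result (Theorem 3.5 of \citealt{pinelis1994optimum}), giving no proof beyond the remark that the Euclidean norm on $\RR^d$ is $(2,1)$-smooth, and your argument is exactly that citation plus the routine tail inversion at $t = 2\sqrt{\log(4/\delta)\sum_i B_i^2}$. Note only that Pinelis's Hilbert-space form actually gives the sharper tail $2\exp(-t^2/(2\sum_i B_i^2))$, which implies your $(4,4)$ version, so the constants in the lemma are simply a harmless relaxation rather than something requiring careful matching.
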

This lemma provides a dimension-free bound due to the fact that the Euclidean norm version of $\RR^{d}$ is $(2,1)$ smooth, see also \citet{kallenberg1991some, fang2018spider}. 


\begin{lemma}\label{lm:basic}
For any $t \neq m_s$, we have
\begin{align}
F(\xb_{t+1}) \leq F(\xb_t) - \frac{\eta_t}{2}\|\db_t\|_2^2 + \frac{\eta_t}{2}\|\bepsilon_t\|_2^2 + \frac{L}{2}\|\xb_{t+1} - \xb_t\|_2^2.\notag
\end{align}
For $t = m_s$, we have $ F(\xb_{t+1}) \leq F(\xb_t) + (\|\db_t\|_2 + \|\bepsilon_{t}\|_{2}+ Lr/2)r.$
\end{lemma}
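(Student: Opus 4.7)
The plan is to apply the standard $L$-smoothness descent inequality from Assumption~\ref{asm:lip} to $F$ in both cases, then decompose the inner product involving the true gradient $\nabla F(\xb_t)$ into a part driven by the estimator $\db_t$ and a residual governed by $\bepsilon_t = \db_t - \nabla F(\xb_t)$.

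For the first case $t \neq m_s$, the update rule is $\xb_{t+1} = \xb_t - \eta_t \db_t$. I would start from
\[
F(\xb_{t+1}) \leq F(\xb_t) + \langle \nabla F(\xb_t), \xb_{t+1} - \xb_t \rangle + \tfrac{L}{2}\|\xb_{t+1} - \xb_t\|_2^2,
\]
substitute $\nabla F(\xb_t) = \db_t - \bepsilon_t$ and $\xb_{t+1} - \xb_t = -\eta_t \db_t$ into the inner product to obtain $-\eta_t\|\db_t\|_2^2 + \eta_t\langle \bepsilon_t, \db_t\rangle$, and then apply Young's inequality $\eta_t\langle \bepsilon_t, \db_t\rangle \leq \tfrac{\eta_t}{2}\|\bepsilon_t\|_2^2 + \tfrac{\eta_t}{2}\|\db_t\|_2^2$. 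Collecting terms yields the claimed bound.

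For the second case $t = m_s$, the update is $\xb_{t+1} = \xb_t + \bxi$ with $\|\bxi\|_2 \leq r$ by construction in Algorithm~\ref{alg:Pullback}. Applying the same smoothness inequality and bounding $\langle \nabla F(\xb_t), \bxi\rangle \leq \|\nabla F(\xb_t)\|_2 \cdot r$ via Cauchy--Schwarz, I would then use the triangle inequality $\|\nabla F(\xb_t)\|_2 \leq \|\db_t\|_2 + \|\bepsilon_t\|_2$ and $\|\bxi\|_2^2 \leq r^2$ to conclude
\[
F(\xb_{t+1}) \leq F(\xb_t) + (\|\db_t\|_2 + \|\bepsilon_t\|_2)r + \tfrac{L}{2}r^2 = F(\xb_t) + (\|\db_t\|_2 + \|\bepsilon_t\|_2 + Lr/2)r.
\]

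There is no real obstacle here: the lemma is the elementary descent-type estimate underlying the whole analysis, and the only mildly nontrivial step is the choice of Young's inequality with weight $1$ (to balance the $\tfrac{\eta_t}{2}\|\db_t\|_2^2$ terms on both sides) so that half of the $\eta_t\|\db_t\|_2^2$ descent is preserved. All other manipulations are direct substitutions.
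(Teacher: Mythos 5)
Your proof is correct and takes essentially the same approach as the paper: for $t \neq m_s$ the paper expands $-\eta_t\langle\nabla F(\xb_t),\db_t\rangle$ via the polarization identity and discards the nonnegative $-\tfrac{\eta_t}{2}\|\nabla F(\xb_t)\|_2^2$ term, which is algebraically identical to your Young's-inequality step with weight $1$; the handling of $t = m_s$ via Cauchy--Schwarz and the triangle inequality also matches the paper.
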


\begin{proof}[Proof of Lemma~\ref{lm:basic}]
By Assumption \ref{asm:lip}, we have
\begin{align}
    F(\xb_{t+1}) \leq F(\xb_t) + \la \nabla F(\xb_t), \xb_{t+1} - \xb_t\ra + \frac{L}{2} \|\xb_{t+1} - \xb_t\|_2^2.
\end{align}
For the case $t \neq m_s$, the update rule is $\xb_{t+1} = \xb_t - \eta_t\db_t$, therefore 
\begin{align*}
    F(\xb_{t+1}) &\le F(\xb_t) - \eta_t\la \nabla F(\xb_t), \db_t\ra + \frac{L}{2} \|\xb_{t+1} - \xb_t\|_2^2\\
    &= F(\xb_t) - \eta_t\|\nabla F(\xb_t)\|_2^2/2 - \eta_t\|\db_t\|_2^2/2 + \eta_t\|\bepsilon_t\|_2^2/2 + L\|\xb_{t+1} - \xb_t\|_2^2/2\\
    &\le  F(\xb_t) - \eta_t\|\db_t\|_2^2/2 + \eta_t\|\bepsilon_t\|_2^2/2 + \frac{L}{2}\|\xb_{t+1} - \xb_t\|_2^2,
\end{align*}
where the first inequality on the first line is due to Assumption \ref{asm:lip} and the second inequality holds trivially. For the case $t = m_s$, since $\| \nabla F(\xb_t)\|_{2} \leq \|\db_t\|_2 + \|\bepsilon_{t}\|_{2}$ we have
\begin{align}
    F(\xb_{t+1}) &\leq F(\xb_t) + \la \nabla F(\xb_t), \xb_{t+1} - \xb_{t}\ra + \frac{L}{2} \|\xb_{t+1} - \xb_t\|_2^2 \notag \\
    & \leq F(\xb_t) + (\|\db_t\|_2 + \|\bepsilon_{t}\|_{2}+ Lr/2)r.\notag
\end{align}
\end{proof}

We present the following lemma from \citet{li2019ssrgd}, which characterizes the moving distance for a SPIDER-type stochastic gradient estimator during the Escape phase. Note that this lemma can be directly applied to our Algorithm~\ref{alg:Pullback} without modification since our algorithm and the SSRGD algorithm in \citet{li2019ssrgd} share the same Escape phase (See lines 8-14 in Algorithm~\ref{alg:Pullback} and lines 9-17 in SSRGD, Algorithm 2, \citealt{li2019ssrgd}).

\begin{lemma}[Lemma 6, \citealt{li2019ssrgd}]\label{lm:hessiandescentbaisc}
Suppose $-\gamma = \lambda_{\min}(\nabla^2 F(\xb_{m_s})) \leq -\epsilon_{H}$. 
Set perturbation radius $r \leq L\etah\eh/(C\rho)$, threshold $\ttres  = 2\log(\etah\epsilon_{H}\sqrt{d} LC^{-1}\rho^{-1}\delta^{-1}r^{-1})/(\etah\eh) = \tilde O(\etah^{-1}\eh^{-1})$,  step size $\etah\leq \min\{1/(16L\log(\etah\epsilon_{H}\sqrt{d} LC^{-1}\rho^{-1}\delta^{-1}r^{-1})), 1/(8CL\log \ttres) \} = \tilde O(L^{-1})$, $b = q = \sqrt{B} \geq 16\log(4/\delta)/(\etah^{2}\eh^{2})$. Let $\{\xb_{t}\}, \{\xb_{t}'\}$ be two coupled sequences by running $\algspider$ \ from $\xb_{m_{s}+1}, \xb_{m_{s}+1}'$ with $\wb_{m_{s}+1} = \xb_{m_{s}+1}-\xb_{m_{s}+1}' = r_{0}\eb_{1}$, where $\xb_{m_{s}+1}, \xb_{m_{s}+1}'\in \mathbb{B}_{\xb_{m_{s}}}(r)$, $r_{0} = \delta r/\sqrt{d}$ and $\eb_{1}$ denotes the smallest eigenvector direction of Hessian $\nabla^{2}F(\xb_{m_{s}})$. Then with probability at least $1-\delta$, 
\begin{align}
    \max_{m_s < t <m_s +\ttres}\{\|\xb_t - \xb_{m_s}\|_2, \|\xb_0 - \xb_{m_s}\|_2\} \geq \frac{L\etah\eh}{C\rho},
\end{align}
where $C = O(\log (d\ttres/\delta) = \tilde O(1)$.
\end{lemma}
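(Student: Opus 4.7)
\textbf{Proof proposal for Lemma~\ref{lm:hessiandescentbaisc}.} The plan is to mimic the induction-based coupling argument used for Lemma~\ref{lemma:small stuck2} (the STORM analogue), but now with the SPIDER estimator. First I would introduce $\wb_t := \xb_t - \xb_t'$, $\cH := \nabla^2 F(\xb_{m_s})$, $\gamma := -\lambda_{\min}(\cH)\ge \eh$, and $\yb_t := \bepsilon_t - \bepsilon_t'$ (the difference of the two SPIDER estimation errors). Using the mean-value form of the gradient together with Hessian Lipschitzness gives the one-step recursion
\begin{align*}
\wb_{t+1} = (\Ib - \etah\cH)\wb_t - \etah(\Delta_t\wb_t + \yb_t),
\qquad \|\Delta_t\|_2 \le \rho D_t,
\end{align*}
where $D_t := \max\{\|\xb_t - \xb_{m_s}\|_2, \|\xb_t' - \xb_{m_s}\|_2\}$. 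Unrolling gives $\wb_t = (\Ib-\etah\cH)^{t-m_s-1}\wb_{m_s+1} - \etah\sum_{\tau}(\Ib-\etah\cH)^{t-1-\tau}(\Delta_\tau\wb_\tau + \yb_\tau)$. Since $\wb_{m_s+1}$ is aligned with the smallest eigenvector, the leading term grows as $(1+\etah\gamma)^{t-m_s-1}r_0$; the task is to show the remainder never catches up during the first $\ttres$ steps.

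Next I would perform a simultaneous induction on $t-m_s-1 \in \{0,1,\dots,\ttres\}$ establishing the two hypotheses
\begin{align*}
\tfrac{1}{2}(1+\etah\gamma)^{t-m_s-1}r_0 \;\le\; \|\wb_t\|_2 \;\le\; \tfrac{3}{2}(1+\etah\gamma)^{t-m_s-1}r_0,
\qquad \|\yb_t\|_2 \;\le\; 2\etah\gamma L(1+\etah\gamma)^{t-m_s-1}r_0.
\end{align*}
The base case follows by construction of $\wb_{m_s+1}=r_0\eb_1$ and an Azuma-Hoeffding bound on $\yb_{m_s+1}$ using the big-batch SPIDER reset together with the choice $B\ge 16\log(4/\delta)\etah^{-2}\eh^{-2}$. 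For the inductive step on $\wb_t$, I would bound the error sum by geometric summation, using $\|\Delta_\tau\|_2\le 2\etah\eh L$ (valid because $D_\tau\le \etah\eh L/\rho + r$ by the not-yet-escaped assumption, combined with $r\le L\etah\eh/(C\rho)$), and pick up a factor of $\ttres=\tilde O(1/(\etah\eh))$ that is absorbed by $\etah\le 1/(16L\log(\cdot))$.

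The main obstacle is the inductive bound on $\|\yb_t\|_2$, since SPIDER has a different recursion than STORM: within an inner loop the error is additive, and it is reset every $q$ steps. I would use the SPIDER recursion for the \emph{difference} sequence,
\begin{align*}
\yb_{t+1} - \yb_t = \tfrac{1}{b}\sum_{i=1}^{b}\bigl[\bigl(\nabla f(\xb_{t+1};\bxi)-\nabla f(\xb_t;\bxi)\bigr)-\bigl(\nabla f(\xb_{t+1}';\bxi)-\nabla f(\xb_t';\bxi)\bigr)\bigr] - \bigl(\nabla F(\xb_{t+1})-\nabla F(\xb_t)-\nabla F(\xb_{t+1}')+\nabla F(\xb_t')\bigr),
\end{align*}
and bound each summand by $2\rho D_\tau\|\wb_\tau\|_2 + 2L\|\wb_{\tau+1}-\wb_\tau\|_2$ via the Hessian-Lipschitz property applied twice (as in \eqref{eq:sma1l3}). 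Azuma-Hoeffding across one inner loop of length at most $q=\sqrt{B}$, together with the refreshed estimate every $q$ iterations, yields
\begin{align*}
\|\yb_t\|_2^2 \;\le\; \tfrac{4\log(4/\delta)}{b}\sum_{i}\bigl(2L\|\wb_{\tau+1}-\wb_\tau\|_2 + 2\rho D_\tau\|\wb_\tau\|_2\bigr)^2 + (\text{big-batch reset term}).
\end{align*}
Plugging in the induction hypothesis for $\|\wb_\tau\|_2$ and using $\|\wb_{\tau+1}-\wb_\tau\|_2\lesssim \etah(\gamma + L)(1+\etah\gamma)^{\tau-m_s-1}r_0$ from a computation parallel to \eqref{eq:small4}, the choice $b=\sqrt{B}\ge 16\log(4/\delta)/(\etah^2\eh^2)$ is exactly what makes the resulting bound at most $2\etah\gamma L(1+\etah\gamma)^{t-m_s-1}r_0$, closing the induction.

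Finally, the lower bound $\|\wb_t\|_2\ge \tfrac12(1+\etah\gamma)^{t-m_s-1}r_0$ together with the triangle inequality gives $\|\wb_t\|_2 \le 2r + 2\max\{\|\xb_t-\xb_{m_s}\|_2,\|\xb_t'-\xb_{m_s}\|_2\}$. If both trajectories stayed inside the ball of radius $L\etah\eh/(C\rho)$ throughout $\ttres$ steps, the RHS would be at most $4L\etah\eh/(C\rho)$, whereas by the choice $\ttres = 2\log(\etah\eh\sqrt{d}LC^{-1}\rho^{-1}\delta^{-1}r^{-1})/(\etah\eh)$ the LHS exceeds this quantity, a contradiction. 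Hence at least one of the coupled sequences must have moved by $L\etah\eh/(C\rho)$ at some $m_s<t<m_s+\ttres$, which is the claimed bound.
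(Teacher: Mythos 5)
For this particular lemma the paper does not supply a proof at all: it imports the statement verbatim from \citet{li2019ssrgd} (Lemma 6 there), arguing only that it applies because Algorithm~\ref{alg:Pullback} and SSRGD share the same Escape phase. Your proposal instead re-derives the result by transplanting the paper's own STORM small-stuck-region argument (Lemma~\ref{lemma:small stuck2} and its proof in the appendix) to the SPIDER estimator, which is in substance the same coupling-plus-induction argument used in the cited SSRGD paper: the recursion $\wb_{t+1}=(\Ib-\etah\cH)\wb_t-\etah(\Delta_t\wb_t+\yb_t)$, the two-part induction hypothesis on $\|\wb_t\|_2$ and $\|\yb_t\|_2$, an Azuma--Hoeffding bound on the coupled SPIDER error difference $\yb_t$ over one inner loop (with the big-batch reset handled separately), and the final exponential-growth-versus-bounded-movement contradiction. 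Your adaptation of the $\yb_t$ bound is the genuinely SPIDER-specific part and you handle it correctly: the additive within-epoch recursion with shared randomness, per-summand bounds $2L\|\wb_{\tau+1}-\wb_\tau\|_2+O(\rho D_\tau)\max_\tau\|\wb_\tau\|_2$ via Hessian Lipschitzness, and the condition $b=q=\sqrt{B}\ge 16\log(4/\delta)/(\etah^2\eh^2)$ entering exactly where the martingale sum must be dominated by $2\etah\gamma L(1+\etah\gamma)^{t-m_s-1}r_0$. What the paper's citation buys is brevity and reuse of an already-verified constant bookkeeping (the $C=O(\log(d\ttres/\delta))$ slack, the $1/(16L\log(\cdot))$ and $1/(8CL\log\ttres)$ step-size conditions); what your route buys is a self-contained argument that makes the parallel with the STORM case explicit. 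Two small points to tighten if you were to write this out in full: (i) the contradiction at the end should be phrased consistently with how the displacement is measured (the lemma measures from $\xb_{m_s}$, while \eqref{eq:region} in the main text measures from $\xb_{m_s+1}$; the extra $2r$ you carry covers this, but say so), and (ii) the "not-yet-escaped" assumption used to bound $\rho D_\tau\le 2\etah\eh L$ should be stated as the standing hypothesis of the contradiction argument before the induction begins, since both induction clauses rely on it.
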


\end{document}